\definecolor{Green}{rgb}{0,0.922,0}
\definecolor{DarkGreen}{rgb}{0,0.5,0}
\definecolor{MildGreen}{rgb}{0,0.784,0}
\definecolor{NormalGreen}{rgb}{0,0.8,0}
\definecolor{LightGreen}{rgb}{0,0.922,0}
\definecolor{Magenta}{rgb}{1,0,0.6}
\definecolor{Yellow}{rgb}{0.95,0.95,0}
\definecolor{Gold}{rgb}{1,0.73,0}
\crefname{conjecture}{Conjecture}{Conjectures}
\newtheorem{theorem}{Theorem}[section]
\newtheorem{proposition}[theorem]{Proposition}
\newtheorem{corollary}[theorem]{Corollary}
\newtheorem{lemma}[theorem]{Lemma}
\theoremstyle{definition}
\newtheorem{definition}[theorem]{Definition}
\newtheorem{remark}[theorem]{Remark}
\newtheorem{example}[theorem]{Example}
\newcommand{\includeSymbol}[1]{\ensuremath{%
	\mathchoice
		{\raisebox{-.7mm}{\includegraphics[height=2.2ex]{#1}}}	
		{\raisebox{-.7mm}{\includegraphics[height=2.2ex]{#1}}}
		{\raisebox{-.6mm}{\includegraphics[height=1.6ex]{#1}}}
		{\raisebox{-.5mm}{\includegraphics[height=1ex]{#1}}}
}}
\newcommand{\ob}{\mathrm{ob}} 
\newcommand{\dH}{\mathrm{d}_{\mathrm{H}}}
\newcommand{\ADD}{\partial^*}
\newcommand{\proj}{\mathrm{proj}}
\newcommand{\wt}{\mathrm{wt}}
\newcommand{\TT}{\mathrm{T}}
\newcommand{\MM}{\mathfrak{K}}
\newcommand{\QQ}{\mathsf{Q}}
\newcommand{\dd}{\eta}
\newcommand{\pp}{\mathsf{p}}
\newcommand{\bp}{\mathsf{bp}}
\newcommand{\ddd}{\delta}
\newcommand{\rr}{\mathfrak{r}}
\newcommand{\Omm}{\Omega}
\newcommand{\ttt}{\boldsymbol{\chi}}
\newcommand{\wo}{w_{\circ}}
\newcommand{\PP}{\mathbb{P}}
\newcommand{\affS}{\widetilde{\mathfrak{S}}}
\newcommand{\HH}{\mathrm{H}}
\newcommand{\BB}{\mathcal{A}}
\newcommand{\CC}{\mathcal{C}}
\newcommand{\idd}{\mathbbm{1}}
\newcommand{\ii}{\mathrm{i}}
\newcommand{\hh}{\mathfrak{f}}
\newcommand{\s}{\blacktriangle}
\newcommand{\SSS}{\mathfrak{S}}
\newcommand{\AmASEP}{\mathrm{ASEP}}
\newcommand{\iTASEP}{\mathrm{iTASEP}}
\newcommand{\CmASEP}{\mathrm{ASEP}^{\mathrm{ob}}}
\newcommand{\Lam}{\mathrm{Lam}}
\newcommand{\ZZ}{\mathbb{Z}}
\newcommand{\qqq}{\mathfrak{q}}
\newcommand{\rrho}{\varrho}
\newcommand{\dfn}[1]{\textcolor{NormalGreen}{\emph{#1}}}
\begin{document}

\title[]{Random Combinatorial Billiards and \\ Stoned Exclusion Processes}
\subjclass[2010]{}

\author[]{Colin Defant}
\address[]{Department of Mathematics, Harvard University, Cambridge, MA 02138, USA}
\email{colindefant@gmail.com}

\begin{abstract}
We introduce and study several \emph{random combinatorial billiard trajectories}. Such a system, which depends on a fixed parameter $p\in(0,1)$, models a beam of light that travels in a Euclidean space, occasionally randomly reflecting off of a hyperplane in the Coxeter arrangement of an affine Weyl group with some probability that depends on the side of the hyperplane that it hits. In one case, we recover Lam's reduced random walk in the limit as $p$ tends to $0$. The investigation of our random billiard trajectories relies on an analysis of new finite Markov chains that we call \emph{stoned exclusion processes}. These processes have remarkable stationary distributions determined by well-studied polynomials such as ASEP polynomials, inhomogeneous TASEP polynomials, and open boundary ASEP polynomials; in many cases, it was previously not known how to construct Markov chains with these stationary distributions. Using multiline queues, we analyze correlations in the \emph{stoned multispecies TASEP}, allowing us to determine limit directions for reduced random billiard trajectories and limit shapes for new random growth processes for $n$-core partitions. 
\end{abstract} 

\maketitle

\section{Introduction}\label{sec:intro}

\subsection{Weyl Groups}\label{subsec:IntroWeyl}  
Let $\Phi$ be a finite irreducible crystallographic root system spanning a Euclidean space $V$, and write $\Phi=\Phi^+\sqcup\Phi^-$, where $\Phi^+$ and $\Phi^-=-\Phi^+$ are the set of positive roots and the set of negative roots, respectively.
Let $W$ and $\widetilde W$ be the Weyl group and affine Weyl group of $\Phi$, respectively. Let $I$ be an index set so that $\{\alpha_i:i\in I\}$ is the set of simple roots, and let $\widetilde I=\{0\}\sqcup I$. Write $S=\{s_i:i\in I\}$ and $\widetilde S=\{s_i:i\in\widetilde I\}$ for the sets of simple reflections of $W$ and $\widetilde W$, respectively. Let $\theta\in\Phi$ be the highest root of $W$. 

Let $V^*$ be the dual space of $V$. Each root $\beta\in \Phi$ has an associated coroot $\beta^\vee\in V^*$. Let $Q^\vee=\mathrm{span}_{\ZZ}\{\beta^\vee:\beta\in\Phi\}\subseteq V^*$ denote the coroot lattice of $W$.  
For $\beta\in\Phi^+$ and $k\in\ZZ$, we define the hyperplane 
\[\HH_\beta^k=\{\gamma\in V^*:\gamma(\beta)=k\}\subseteq V^*.\]  The \dfn{Coxeter arrangements} of $W$ and $\widetilde W$ are \[\mathcal H_W=\{\HH_{\beta}^0:\beta\in\Phi^+\}\quad\text{and}\quad{\mathcal H}_{\widetilde W}=\{\HH_{\beta}^k:\beta\in\Phi^+,\,\, k\in\ZZ\},\] respectively. 

There is a faithful right action of $\widetilde W$ on $V^*$; each simple reflection $s_i\in S$ acts via the reflection through the hyperplane $\HH_{\alpha_{i}}^0$, while $s_0$ acts via the reflection through $\HH_{\theta}^1$. The closures of the connected components of $V^*\setminus\bigcup_{\HH\in\mathcal H_W}\HH$ are called \dfn{chambers}, while the closures of the connected components of $V^*\setminus\bigcup_{\HH\in\mathcal H_{\widetilde W}}\HH$ are called \dfn{alcoves}. The \dfn{fundamental chamber} is 
\[\CC=\{\gamma\in V^*:\gamma(\alpha_i)\geq 0\text{ for all }i\in I\},\] and the \dfn{fundamental alcove} is 
\[\BB=\{\gamma\in\CC:\gamma(\theta)\leq 1\}.\] 
The map $u\mapsto\CC u$ is a bijection from $W$ to the set of chambers. The map $u\mapsto\BB u$ is a bijection from $\widetilde W$ to the set of alcoves. Two distinct alcoves are \dfn{adjacent} if they share a common facet. The alcoves adjacent to $\BB u$ are precisely the alcoves of the form $\BB su$ for $s\in \widetilde S$. Let $\HH^{(u,s)}$ denote the unique hyperplane separating $\BB u$ and $\BB s u$. 

Consider the $|I|$-dimensional torus $\mathbb T=V^*/Q^\vee$, and let $\qqq\colon V^*\to\mathbb T$ be the natural quotient map. There is a quotient map $\widetilde W\to W$, which we denote by $w\mapsto\overline w$, where $\overline w$ is the unique element of $w$ such that $\qqq(\BB w)=\qqq(\BB\overline w)$. 

\subsection{Reduced Random Walks} 
In \cite{Lam}, Lam introduced the \dfn{reduced random walk} in $\widetilde W$, a very natural and intriguing random walk on the set of alcoves of ${\mathcal H}_{\widetilde W}$ (equivalently, on $\widetilde W$). The reduced random walk starts at $\BB$. Suppose that at some point in time, the walk is at an alcove $\BB u$. A simple reflection $s$ is chosen uniformly at random from $\widetilde S$. If the walk has already crossed through the hyperplane $\HH^{(u, s)}$ (i.e., if $\HH^{(u,s)}$ separates $\BB u$ from $\BB$), then the walk stays at the alcove $\BB u$; otherwise, it transitions to $\BB su$. Let $\widetilde{\bf M}_{\Lam}$ denote the reduced random walk in $\widetilde W$. 

Let \[\widehat W=\{w\in \widetilde W:\BB w\subseteq\CC\}\] denote the set of \dfn{affine Grassmannian} elements of $\widetilde W$. Lam also introduced the \dfn{affine Grassmannian reduced random walk} in $\widehat W$, which is the random walk $\widehat{\bf M}_\Lam$ obtained by conditioning $\widetilde{\bf M}_{\Lam}$ to stay within $\mathcal C$. By projecting $\widehat{\bf M}_\Lam$ through the natural quotient map $\widetilde W\to W$, Lam obtained an irreducible finite-state Markov chain ${\bf M}_{\Lam}$ on $W$; when $W$ is of type~$A_{n-1}$, it turns out that ${\bf M}_{\Lam}$ is isomorphic to the \emph{$n$-species totally asymmetric simple exclusion process} ($n$-species TASEP) on a ring with $n$ sites. The Markov chain ${\bf M}_\Lam$ can also be seen as a certain random walk on the \emph{toric alcoves} in the torus $\mathbb T$. Let $\zeta_\Lam$ denote the stationary probability distribution of ${\bf M}_\Lam$.  

Lam found that the asymptotic behavior of the reduced random walk in $\widetilde W$ is governed by $\zeta_\Lam$; the following theorem summarizes his main results in this vein. For $\gamma\in V^*\setminus\{0\}$, let $\langle \gamma\rangle$ denote the unit vector in $V^*$ that points in the same direction as $\gamma$. By a slight abuse of notation, we write $\langle\BB u\rangle$ for the unit vector that points in the same direction as the center of $\BB u$. Let $\wo$ denote the long element of $W$. 

\begin{theorem}[{Lam~\cite{Lam}}]\label{thm:Lam} 
Let $\BB u_M$ and $\BB v_M$ denote the states at time $M$ of the reduced random walk in $\widetilde W$ and the affine Grassmannian reduced random walk in $\widehat W$, respectively. Let 
\[\psi_\Lam=\sum_{\substack{w\in W\\ w^{-1}\theta\in\Phi^+}}\zeta_\Lam(w)\theta^\vee w.\] With probability $1$, \[\lim_{M\to\infty}\langle \BB v_M\rangle=\langle\psi_{\Lam}\rangle,\] and the limit \[\lim_{M\to\infty}\langle \BB u_M\rangle\] exists and belongs to $\langle\psi_\Lam\rangle W$. Moreover, 
for each $v\in W$, we have \[\PP\left(\lim_{M\to\infty}\langle \BB u_M\rangle=\langle\psi_\Lam\rangle v\right)=\zeta_{\Lam}(v^{-1}\wo).\] 
\end{theorem}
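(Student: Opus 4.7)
The plan is to reduce everything to the ergodic theory of the finite Markov chain ${\bf M}_\Lam$ on $W$. First I would analyze the one-step displacement of $\widetilde{\bf M}_\Lam$: when the walk transitions from $\BB u$ to $\BB s u$, the centers differ by a scalar multiple of $\beta^\vee$, where $\beta \in \Phi^+$ labels the separating hyperplane $\HH^{(u,s)}$. Crucially, $\beta$ depends on $u$ only through the image $\overline u \in W$, which is the state of the projected chain. Hence the per-step displacement process is a deterministic function of the trajectory of ${\bf M}_\Lam$, read off from $\widehat{\bf M}_\Lam$ (or from $\widetilde{\bf M}_\Lam$ once it is trapped in a chamber; see below).

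Given this setup, the first claim about $\widehat{\bf M}_\Lam$ follows from a law of large numbers argument. Since ${\bf M}_\Lam$ is irreducible with stationary distribution $\zeta_\Lam$, the empirical distribution of states converges almost surely to $\zeta_\Lam$ by the ergodic theorem for finite Markov chains. Applying this to the additive functional given by the per-step displacement, the vector $\BB v_M - \BB$ grows linearly at a rate proportional to $\psi_\Lam$, so $\langle \BB v_M\rangle \to \langle\psi_\Lam\rangle$ almost surely. The sum defining $\psi_\Lam$ is restricted to $\{w \in W : w^{-1}\theta \in \Phi^+\}$ because these are exactly the states of ${\bf M}_\Lam$ from which the reflection $s_0$ produces a genuine crossing; the remaining states contribute zero to that component of the drift.

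Next, for the walk $\widetilde{\bf M}_\Lam$ on all of $\widetilde W$, I would show that with probability $1$ it is eventually confined to a single Weyl chamber $\CC v$. This uses reducedness: the set of linear hyperplanes $\HH_\beta^0$ the walk has "separated itself from" (in the sense that all crossed affine copies of $\HH_\beta^k$ lie on one side) only grows, and a drift estimate based on the nonvanishing of $\psi_\Lam$ forces this set to stabilize into a Weyl-chamber complement. Once the walk enters $\CC v$ and remains there, the remainder of its trajectory can be compared, via the right $W$-action on $V^*$, to an affine Grassmannian walk based at a shifted chamber, so its limit direction is $\langle\psi_\Lam\rangle v$ by the first part.

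The remaining task is to compute $\PP(\lim\langle\BB u_M\rangle = \langle\psi_\Lam\rangle v) = \zeta_\Lam(v^{-1}\wo)$. I would look for a time-reversal or duality argument: the terminal chamber should be determined by a finite initial "escape pattern" of simple reflections, and a bijection between such patterns and reduced expressions (inverted and conjugated by $\wo$) should match probabilities of escape patterns with the stationary weights of ${\bf M}_\Lam$. The main obstacle is making this last duality precise; in particular, relating the finite-time escape statistics of the reduced walk to the stationary measure $\zeta_\Lam$ of the projected chain, and confirming the specific indexing involution $v \mapsto v^{-1}\wo$, will require careful bookkeeping with reduced words for $\wo$ in the affine Weyl group.
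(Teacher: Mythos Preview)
This theorem is not proved in the paper; it is quoted from Lam's article and then used as a model for the analogous \cref{thm:LamBilliards,thm:LamBilliards2}, whose proofs the paper says closely follow Lam's. So the natural comparison is between your proposal and the arguments in \cref{sec:LamProof}.

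Your plan for the first two assertions matches that template well. The paper (for \cref{thm:LamBilliards}) writes $v_M=\overline v_M\tau_{\dd_{v_M}}$, observes that $\dd_{v_M}$ changes only when $s_0$ is applied, uses $s_0=\overline s_0\tau_{-\theta^\vee}$ to see that the increment is $-\theta^\vee\overline v_M$, and applies the ergodic theorem for ${\bf M}_\Lam$ to average the increments. That is exactly your ``per-step displacement is a function of $\overline u$'' argument, specialized to the only simple reflection that moves the translation part. Eventual confinement to a chamber is handled the same way you describe (the walk cannot re-enter a chamber it has left), and the limit direction in $\CC v$ then follows from \cref{lem:AG_right_action}.

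For the last assertion, however, your proposal has a real gap. You gesture at a ``time-reversal or duality'' and a bijection on ``escape patterns,'' but the actual mechanism (in Lam, and mirrored in the proof of \cref{thm:LamBilliards2}) is different and more algebraic. Two specific ingredients are missing. First, one expresses $\PP(u_M=x)$ as the coefficient of $\TT_x$ in a product of factors $(a+b\TT_i)$ in the $0$-Hecke algebra and then applies the antiautomorphism $\TT_w\mapsto\TT_{w^{-1}}$; because in Lam's walk the indices $i$ are i.i.d.\ uniform, the product is invariant under reversal, giving $\PP(u_M=x)=\PP(u_M=x^{-1})$. Second, one needs Lam's lemma on regular elements (recorded here as \cref{lem:regular}): if $x\in\CC_w^v$ is regular then $x^{-1}\in\CC_{\wo w v^{-1}}^{v^{-1}}$. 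Combining these with $\lim_{M}\PP(u_M\in\CC_w^v)=\zeta_\Lam(vw^{-1})\,\PP(w_*=w)$ and setting $v=\wo w$ yields $\PP(w_*=w)\propto\zeta_\Lam(w^{-1}\wo)$, and normalization finishes. Your ``escape pattern/reduced word bijection'' idea does not supply either of these steps, and in particular it is the regular-element lemma, not any combinatorics of reduced words, that produces the involution $v\mapsto v^{-1}\wo$.
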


\subsection{Random Billiards}\label{subsec:intro_billiards}
\emph{Dynamical algebraic combinatorics} is a field that studies dynamical systems on objects of interest in algebraic combinatorics (see, e.g., \cite{DefantToric,Haiman,HopkinsRubey,ProppRoby,Rhoades,StanleyPromotion,StrikerSurvey,StrikerWilliams,ThomasWilliams}). \emph{Mathematical billiards} is a subfield of dynamics concerning the trajectory of a beam of light that moves in a straight line except for occasional reflections \cite{Burago, Masur, McMullen,  Tabachnikov}. \emph{Combinatorial billiards} combines these two areas, focusing on mathematical billiard systems that are in some sense rigid and discretized; these billiard systems can usually be modeled combinatorially or algebraically \cite{DefantRefractions, DefantJiradilok, DJM, Zhu, Barkley, BDHKL}. Our first goal in this article is to define a random billiard trajectory that resembles Lam's reduced random walk and its variants. 

\begin{figure}[ht]
\begin{center}{\includegraphics[width=\linewidth]{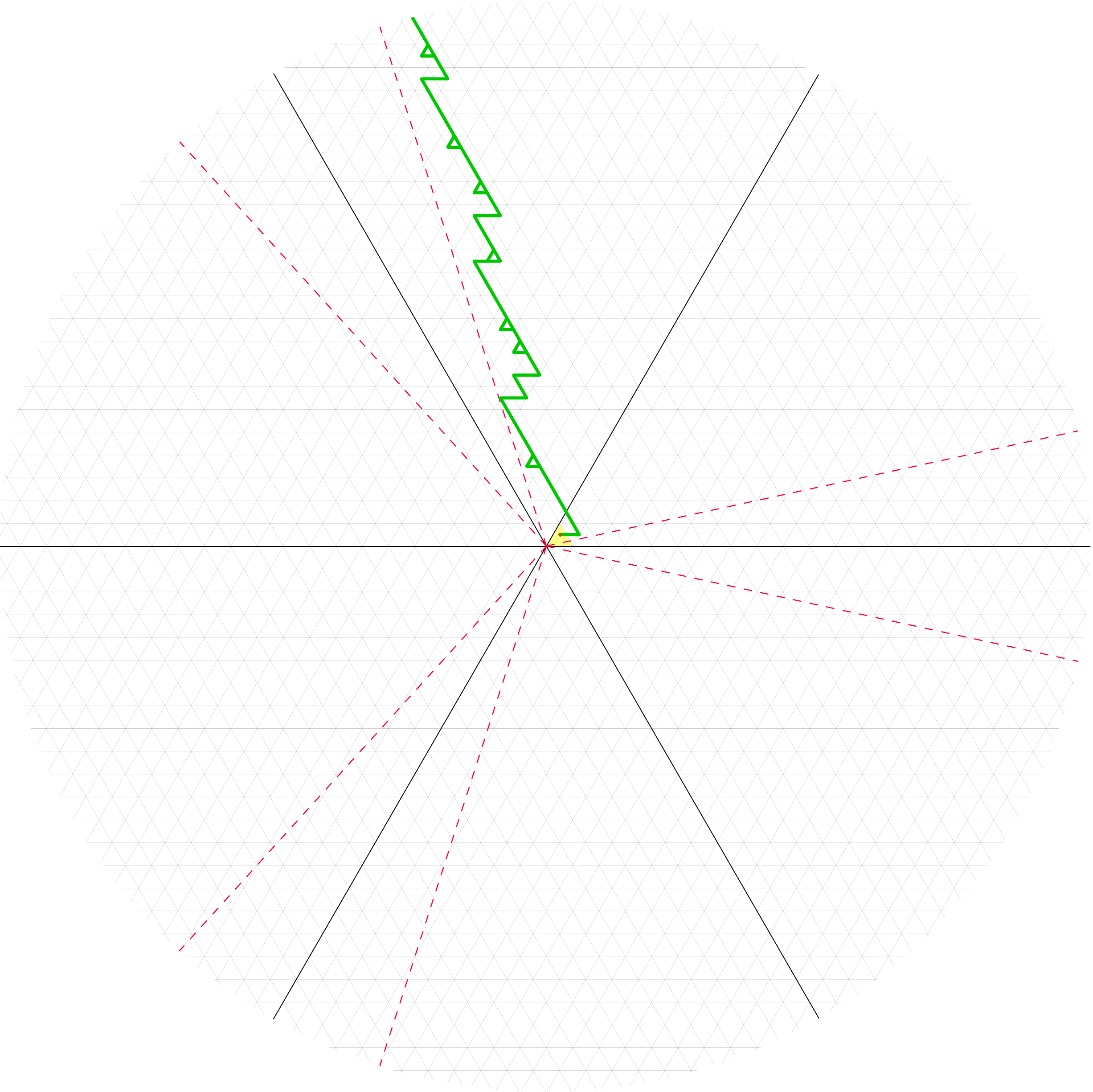}}
\end{center}
\caption{A reduced random billiard trajectory in the affine symmetric group $\affS_3=\widetilde A_2$ with parameter $p=3/4$. The ({\color{MildGreen}green}) beam of light starts in the ({\color{Yellow}yellow}) fundamental alcove traveling in the direction of the vector $\ddd^{(3)}=(1,1,-2)$. Occasionally, the beam of light traverses a small triangle numerous times; however, the number of times it traverses the small triangle is not discernible from the figure. The six possible asymptotic directions of the beam of light are represented by {\color{red}red} dotted rays. The six thick black lines are the hyperplanes in the Coxeter arrangement of the finite Weyl group $\SSS_3=A_2$; they separate the space into six chambers.}   
\label{fig:Trajectory}
\end{figure} 

Fix a point $z_0$ in the interior of $\BB$. For $\dd\in V^*\setminus\{0\}$, let $\mathfrak r_\dd$ be the ray that starts at $z_0$ and travels in the direction of $\dd$. Let $\Upsilon_{z_0}$ denote the set of vectors $\dd\in Q^\vee\setminus\{0\}$ such that $\rr_\dd$ does not pass through the intersection of two or more hyperplanes in $\mathcal H_{\widetilde W}$.

Given $\dd\in\Upsilon_{z_0}$, we can record the sequence $\BB u_0, \BB u_1,\ldots$ of alcoves through which $\mathfrak r_\dd$ passes (in particular, $\BB u_0=\BB$); we then define the infinite word $\mathsf{w}(\dd)=\mathsf{w}_{z_0}(\dd)=\cdots s_{i_1}s_{i_0}$, where $s_{i_j}$ is the unique simple reflection such that $u_{j+1}=s_{i_j}u_{j}$ (our convention is that infinite words extend infinitely to the left). The word $\mathsf{w}(\dd)$ is necessarily periodic since $\dd\in Q^\vee$ (the labeling of the facets of alcoves by simple reflections is invariant under translation by a coroot vector). We let $N=N_\dd$ denote the period of $\mathsf{w}(\dd)$. (See \cref{subsec:introA} for an example.) 

\begin{definition}\label{def:RRBT}
Fix $\dd\in\Upsilon_{z_0}$ and $p\in(0,1)$. Shine a beam of light from $z_0$ in the direction of $\dd$. Whenever the beam of light hits a hyperplane in ${\mathcal H}_{\widetilde W}$ that it has not yet crossed, it passes through the hyperplane with probability $p$ and reflects off of the hyperplane with probability $1-p$. Whenever the beam of light hits a hyperplane in ${\mathcal H}_{\widetilde W}$ that it has already crossed, it reflects off of the hyperplane. We call this random process a \dfn{reduced random billiard trajectory}, and we denote it by $\mathrm{RRBT}_{z_0}(\dd)$. By imposing the extra condition that the beam of light always reflects when it hits a wall of the fundamental chamber, we obtain a different random process that we call the \dfn{affine Grassmannian reduced random billiard trajectory} and denote by $\mathrm{AGRRBT}_{z_0}(\dd)$.
\end{definition}

\cref{fig:Trajectory,fig:Trajectory2} illustrate the preceding definitions. The random billiard models in \cref{def:RRBT}, as well as the other random billiard systems that we will introduce later, appear to be genuinely new, and they suggest numerous directions for future work (see \cref{sec:other}). However, we note that there are several previous works that have considered other (quite different) stochastic billiard systems (see, e.g., \cite{Cook, Comets, Feres}). In addition, a recent article by Defant, Jiradilok, and Mossel \cite{DJM} considers a variant of the random combinatorial billiard process considered here in which the ``reduced'' aspect of the model is removed. In that model, when the beam of light hits a hyperplane in $\mathcal H_{\widetilde W}$, it passes through with probability $p$ and reflects with probability $1-p$, regardless of whether it has already crossed the hyperplane.

\begin{figure}[ht]
\begin{center}{\includegraphics[width=0.5\linewidth]{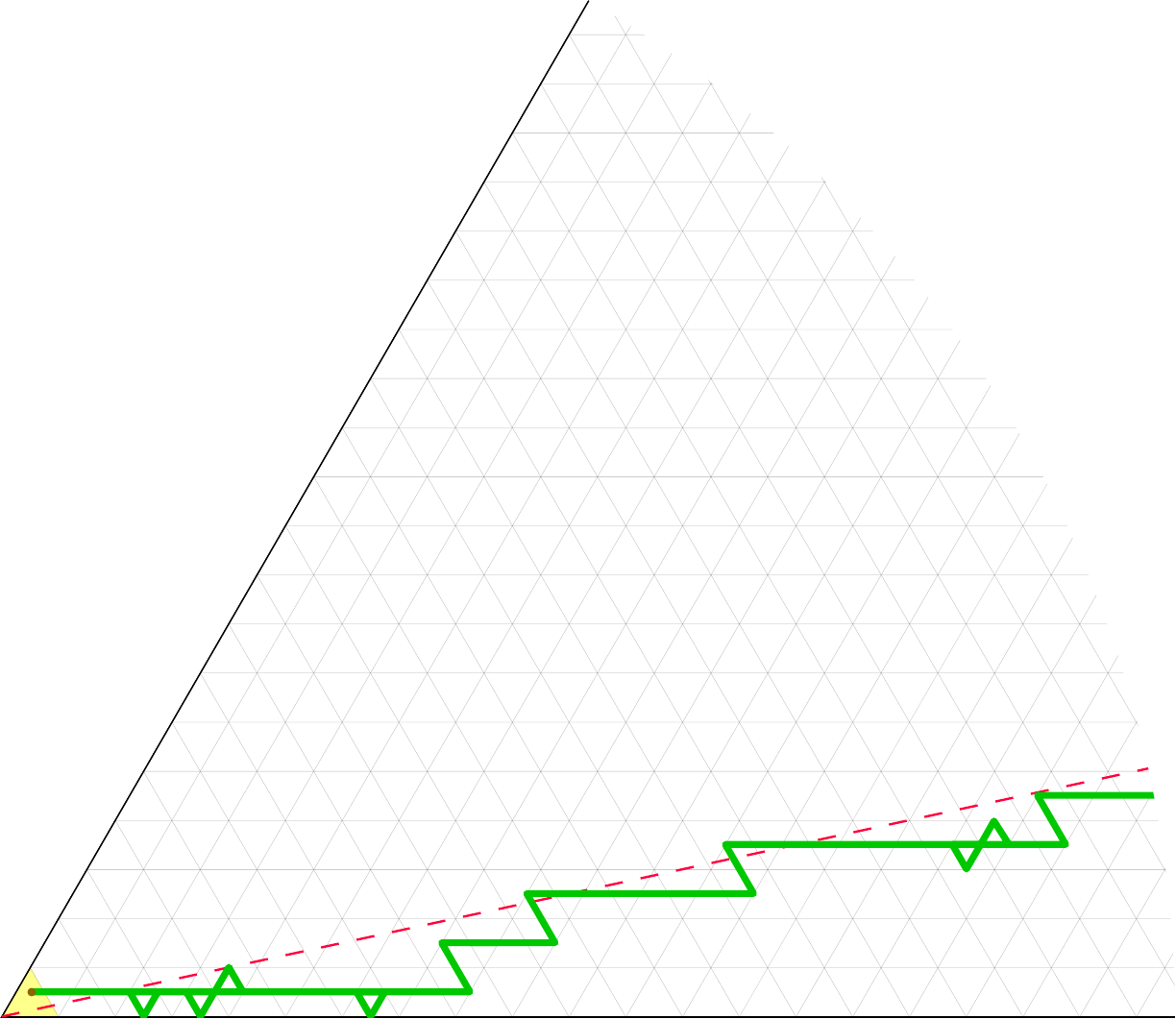}}
\end{center}
\caption{An affine Grassmannian reduced random billiard trajectory in ${\widehat\SSS_3=\widehat A_2}$ with parameter $p=3/4$. The ({\color{MildGreen}green}) beam of light starts in the ({\color{Yellow}yellow}) fundamental alcove traveling in the direction of the vector $\ddd^{(3)}=(1,1,-2)$. The {\color{red}red} dotted ray indicates the direction that the beam of light follows asymptotically.}   
\label{fig:Trajectory2}
\end{figure} 

We can discretize the reduced random billiard trajectory by only keeping track of the alcove containing the beam of light and the direction that the beam of light is facing. Let $u_M$ be the alcove containing the beam of light after it hits a hyperplane in $\mathcal H_{\widetilde W}$ for the $M$-th time; at this point in time, the beam of light is facing toward the facet of $\BB u_M$ contained in the hyperplane $\HH^{(u_M,s_{i_M})}$.  In this way, we obtain a discrete-time Markov chain $\widetilde{\bf M}_\dd$ whose state at time $M$ is the pair $(u_M,M)$ in $\widetilde W\times\ZZ/N\ZZ$. We call this Markov chain a \dfn{reduced random combinatorial billiard trajectory} (RRCBT). In a similar manner, we can discretize the affine Grassmannian reduced random billiard trajectory to obtain the \dfn{affine Grassmannian reduced random combinatorial billiard trajectory} (AGRRCBT), which is a discrete-time Markov chain $\widehat{\bf M}_\dd$ with state space $\widehat W\times\ZZ/N\ZZ$. By projecting $\widehat{\bf M}_{\dd}$ through the natural quotient map ${\widetilde W\times \ZZ/N\ZZ\to W\times \ZZ/N\ZZ}$ given by $(w,M)\mapsto (\overline w,M)$, we obtain a Markov chain ${\bf M}_{\dd}$ on $W\times\ZZ/N\ZZ$. We will always impose the (mild) assumption that ${\bf M}_\dd$ is irreducible (this will be the case in all of the specific examples of interest to us). The Markov chain ${\bf M}_\dd$ can be seen as a random combinatorial billiard trajectory in the torus $\mathbb T$. Each toric hyperplane of the form $\qqq(\HH)$ for $\HH\in\mathcal H_{\widetilde W}$ has two sides. When the beam of light (now traveling in the torus) hits $\qqq(\HH)$, it either passes through or reflects; the probability that it passes through is either $p$ or $0$, depending on which side of $\qqq(\HH)$ it hits (see \cref{fig:torus_directions}). Let $\zeta_{\dd}$ denote the stationary probability distribution of ${\bf M}_{\dd}$. We stress that $\zeta_\dd$ depends on the fixed parameter $p$.

\begin{figure}[ht]
\begin{center}{\includegraphics[height=8.26cm]{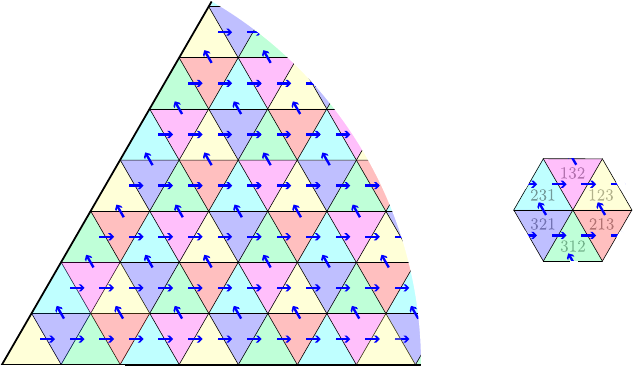}}
\end{center}
\caption{In an AGRRCBT in $\widehat\SSS_3$ defined with respect to the initial direction $\ddd^{(3)}=(1,1,-2)\in Q^\vee$ (which points to the right in the image on the left), the light can only pass through hyperplanes in the directions indicated by arrows in the left image. More precisely, when the light tries to move along one of the arrows, it does so with probability $p$ (and reflects with probability $1-p$). Two alcoves on the left are given the same color if and only if they are related by translation by a coroot vector. A fundamental domain for the torus $\mathbb T$ is shown on the right; note that opposite sides of this hexagon are identified. The toric alcoves correspond to the permutations in $\SSS_3$. The AGRRCBT projects to this torus. In the torus, the light can only pass through toric hyperplanes in the directions indicated by arrows. }   
\label{fig:torus_directions}
\end{figure}

The following two theorems provide analogues of Lam's \cref{thm:Lam}. 

\begin{theorem}\label{thm:LamBilliards} 
Let $\dd\in\Upsilon_{z_0}$, and let $\mathsf{w}(\dd)=\cdots s_{i_1}s_{i_0}$. Let \[(u_M,M)\in \widetilde W\times \ZZ/N_\dd\ZZ\quad\text{and}\quad(v_M,M)\in\widehat W\times\ZZ/N_\dd\ZZ\] denote the states of $\widetilde{\bf M}_\dd$ and $\widehat{\bf M}_{\dd}$, respectively, at time $M$. Let 
\[\psi_\dd=\sum_{\substack{(w,k)\in W\times \ZZ/N_\dd\ZZ\\ i_k=0 \\ w^{-1}\theta\in\Phi^+}}\zeta_\dd(w,k)\theta^\vee w.\] With probability $1$, 
\begin{equation}\label{eq:AGpsi}
\lim\limits_{M\to\infty}\langle \BB v_M\rangle=\langle\psi_\dd\rangle.
\end{equation} 
Moreover, with probability $1$, the limit $\lim\limits_{M\to\infty}\langle \BB u_M\rangle$ exists and belongs to $\langle\psi_\dd\rangle W$. 
\end{theorem}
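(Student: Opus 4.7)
The plan is to follow the strategy Lam used to prove \cref{thm:Lam}, adapted to handle the additional periodic time-coordinate $k \in \ZZ/N_\dd\ZZ$ and the acceptance probability $p$.

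For the AGRRCBT part, I would decompose each state as $v_M = \overline{v_M}\cdot t_{\lambda_M}$ via the semidirect product $\widetilde W = W\ltimes Q^\vee$, with $\overline{v_M}\in W$ and $\lambda_M\in Q^\vee$. Since the center of $\BB v_M$ and the coroot vector $\lambda_M$ differ by a uniformly bounded vector, it is enough to prove $\lambda_M/M$ converges almost surely to a positive scalar multiple of $\psi_\dd$. Using $s_0 = s_\theta t_{\theta^\vee}$, I would verify that $\lambda_M$ changes only at accepted $s_0$-steps and that, at such a step taken from state $(\overline{v_M},k)$ with $i_k = 0$, the increment is proportional (under the semidirect product conventions) to $\theta^\vee\overline{v_M}$; the affine Grassmannian constraint automatically forces $\overline{v_M}^{-1}\theta\in\Phi^+$ in this situation. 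Applying the strong law of large numbers for the irreducible finite-state chain ${\bf M}_\dd$ on $W\times\ZZ/N_\dd\ZZ$ with stationary distribution $\zeta_\dd$, the expected per-step increment of $\lambda$ then averages to a positive multiple of
\[
\sum_{\substack{(w,k)\\ i_k=0,\;w^{-1}\theta\in\Phi^+}} \zeta_\dd(w,k)\,\theta^\vee w = \psi_\dd,
\]
yielding $\lambda_M/M \to p\,\psi_\dd$ almost surely. After checking $\psi_\dd\neq 0$ (which follows from irreducibility of ${\bf M}_\dd$ together with the positivity of $p$ at any recurrent state $(w,k)$ with $i_k=0$ and $w^{-1}\theta\in\Phi^+$), I obtain \eqref{eq:AGpsi}.

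For the RRCBT part, I would exploit the finiteness observation that the arrangement $\mathcal H_W$ contains only $|\Phi^+|$ hyperplanes, each of which can be crossed by the beam of light at most once. Consequently, with probability $1$ there is a random finite time $M_0$ after which no further $\mathcal H_W$-crossings occur, and the light lies in some random chamber $\CC w_\infty$ in which it remains forever. Once all remaining affine hyperplanes separating $\BB$ from $\BB u_{M_0}$ have been crossed, the residual dynamics inside $\CC w_\infty$ is (up to right-multiplication by $w_\infty^{-1}$) an AGRRCBT in the fundamental chamber attached to a $w_\infty$-conjugated initial direction. By the $W$-equivariance of the constructions of $\zeta_\dd$ and $\psi_\dd$, the AGRRCBT result applied inside $\CC w_\infty$ then gives $\lim_{M\to\infty}\langle \BB u_M\rangle = \langle\psi_\dd\rangle w_\infty \in \langle\psi_\dd\rangle W$.

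I expect the main obstacle to be this last step. One must argue cleanly that the post-trapping dynamics is genuinely an AGRRCBT (not merely a related billiard in $\CC w_\infty$), identify which $W$-conjugate of $\psi_\dd$ arises, and show that the asymptotic direction inside $\CC w_\infty$ depends only on the stationary distribution (transported by $w_\infty$) rather than on the random trajectory up to time $M_0$. In particular, one must rule out (as a probability-zero event given trapping) the scenario in which the light reflects off an uncrossed boundary wall of $\CC w_\infty$ infinitely often. The remaining pieces---the ergodic theorem for the time-inhomogeneous chain on $W\times\ZZ/N_\dd\ZZ$ and the semidirect product bookkeeping needed to identify the drift with $\psi_\dd$---should be routine.
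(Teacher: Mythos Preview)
Your approach to the first statement (the AGRRCBT limit \eqref{eq:AGpsi}) is essentially the paper's: decompose $v_M=\overline{v_M}\tau_{\dd_{v_M}}$, observe that the translation part changes only at accepted $s_0$-steps, with increment $-\theta^\vee\overline{v_M}$ (and that the affine Grassmannian constraint forces $\overline{v_M}^{-1}\theta\in\Phi^+$ at such steps), and then apply the ergodic theorem for the irreducible finite chain ${\bf M}_\dd$ to identify the drift direction with $\langle\psi_\dd\rangle$.

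For the second statement, the paper takes a cleaner route than your ``trap-then-restart'' argument. Rather than waiting for the light to settle in some chamber $\CC w_\infty$ and then arguing that the residual dynamics is (almost) an AGRRCBT, the paper observes (\cref{lem:AG_right_action}) that the affine Grassmannian representative $\hat u_M$ of $u_MW$ has the same law as the AGRRCBT $v_M$---and this holds from time $0$, not merely after trapping. The reason is that the right $W$-action on alcoves permutes chambers and commutes with left-multiplication by $s_{i_k}$; in particular, when the RRCBT crosses a wall of its current chamber, one has $s_{i_k}\hat u_M=\hat u_M s_j$ for some $j\in I$, so the projection $\hat u_M$ does not move at all, exactly matching the ``always reflect at walls of $\CC$'' rule. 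One then concludes by combining the already-proven convergence $\langle\BB\hat u_M\rangle\to\langle\psi_\dd\rangle$ with the elementary fact that the element $w_M\in W$ satisfying $u_M=\hat u_M w_M$ is eventually constant, since $|\mathcal H_W|<\infty$ and each such hyperplane is crossed at most once. This device dissolves the obstacle you flagged: there is no need to analyze the post-trapping dynamics, to worry about uncrossed boundary walls being hit infinitely often, or to verify $W$-equivariance of $\psi_\dd$. Your argument can be completed along the lines you indicate (the Borel--Cantelli patch works), but the paper's route is shorter and avoids the case analysis.
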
 

\begin{theorem}\label{thm:LamBilliards2} 
Let $\dd\in\Upsilon_{z_0}$. Choose $\epsilon\in\{\pm 1\}$ uniformly at random, and let $(u_M,M)\in\widetilde W\times\ZZ/N_\dd\ZZ$ denote the state of $\widetilde{\bf M}_{\epsilon\dd}$ at time $M$. There exists $w_*\in W$ such that $u_M\in\CC w_*$ for all sufficiently large $M$. For each $w\in W$, we have 
\[\mathbb P(w_*=w)=\frac{N_\dd}{2}(\zeta_{\dd}(w^{-1}\wo,0)+\zeta_{-\dd}(w^{-1}\wo,0)).\]
\end{theorem}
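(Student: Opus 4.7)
The plan is to combine Theorem~\ref{thm:LamBilliards} with a chamber-identification argument and an ergodic computation on the finite Markov chain ${\bf M}_\dd$.

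First, I would apply Theorem~\ref{thm:LamBilliards} conditional on each value of $\epsilon\in\{\pm 1\}$, obtaining that with probability $1$ the limit $L_\epsilon:=\lim_{M\to\infty}\langle\BB u_M\rangle$ exists and lies in $\langle\psi_{\epsilon\dd}\rangle W$. To upgrade this to the existence of $w_*$, I would use a geometric pigeonhole argument: the projected chain ${\bf M}_{\epsilon\dd}$ is irreducible, so the lifted walk $\widetilde{\bf M}_{\epsilon\dd}$ has a linear drift, and the centers of the alcoves $\BB u_M$ escape to infinity in the direction $L_\epsilon$. Since only finitely many chambers meet any sufficiently narrow open cone around $L_\epsilon$ that is far from the origin, the alcoves $\BB u_M$ must eventually lie in the unique chamber $\CC w_*$ whose open cone contains $L_\epsilon$.

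Second, I would identify, for each $v\in W$, the chamber $\CC w_*(v)$ whose interior contains $\langle\psi_{\epsilon\dd}\rangle v$. Guided by Lam's theorem (where the twist by $\wo$ appears in the index of $\zeta_\Lam$), I expect $\psi_\dd$ to lie in the interior of the anti-fundamental chamber $\CC\wo$; by $W$-equivariance, $\langle\psi_{\epsilon\dd}\rangle v\in\mathrm{int}(\CC\wo v)$, so $w_*=\wo v$, equivalently $v=\wo w_*$, which (after the appropriate $\wo$-conjugation) yields the index $w^{-1}\wo$ appearing in the formula. Verifying that $\psi_\dd\in\mathrm{int}(\CC\wo)$ would require unpacking the definition $\psi_\dd=\sum\zeta_\dd(w,k)\theta^\vee w$ and reducing to a positivity check on the coefficients $\langle\psi_\dd,\alpha_i\rangle$.

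Third, I would compute $\PP(w_*=w\mid\epsilon=1)$ by an ergodic argument on ${\bf M}_\dd$. Since the phase coordinate $M\bmod N_\dd$ evolves deterministically, it is uniform under the stationary distribution $\zeta_\dd$, so $\sum_{u\in W}\zeta_\dd(u,k)=1/N_\dd$ for each $k\in\ZZ/N_\dd\ZZ$. Consequently the conditional distribution of the $W$-coordinate at phase $k=0$ is $u\mapsto N_\dd\,\zeta_\dd(u,0)$. Combining this with the identification from Step~2 gives $\PP(w_*=w\mid\epsilon=1)=N_\dd\,\zeta_\dd(w^{-1}\wo,0)$, and averaging over $\epsilon\in\{\pm 1\}$ with weight $1/2$ produces the stated formula. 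I expect the main obstacle to be this last step: rigorously linking the asymptotic chamber $\CC w_*$ selected by the lifted walk $\widetilde{\bf M}_\dd$ to the stationary phase-$0$ state of the finite projection ${\bf M}_\dd$, by tracking how the macroscopic $V^*$-displacement accumulated per period of length $N_\dd$ assembles from the local transitions of the Markov chain.
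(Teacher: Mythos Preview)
There is a genuine gap, and it lies exactly where you flagged the ``main obstacle.'' Your Step~3 asserts the conditional formula $\PP(w_*=w\mid\epsilon=1)=N_\dd\,\zeta_\dd(w^{-1}\wo,0)$, but this is in fact an open problem (the paper says so explicitly in its final section on ``Breaking Symmetry''). The ergodic argument you sketch computes the stationary law of $\overline{u}_M$ under the projection of the \emph{affine Grassmannian} chain $\widehat{\bf M}_\dd$; it does not compute the distribution of the chamber $w_*$ selected by the \emph{unrestricted} chain $\widetilde{\bf M}_\dd$. Once the walk settles into $\CC w_*$, one has $\overline{u}_M=\overline{\hat v_M}\,w_*$ with $\hat v_M$ evolving as an AGRRCBT, so ergodicity gives $\lim_k\PP(u_{kN}\in\CC_w^v)=\PP(w_*=w)\cdot N_\dd\,\zeta_\dd(vw^{-1},0)$ (this is the paper's equation~\eqref{eq:eta}, conditioned on $\epsilon$). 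But this is a single linear relation in the unknowns $\PP(w_*=w)$, not a solution: you need a second equation to pin them down.

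The missing idea is the reason the random $\epsilon$ is there at all. Because $\mathsf{w}(-\dd)$ is the reversal of $\mathsf{w}(\dd)$, averaging over $\epsilon$ makes the law of $u_{kN}$ invariant under the $0$-Hecke antiautomorphism $\TT_w\mapsto\TT_{w^{-1}}$, yielding $\PP(u_{kN}=x)=\PP(u_{kN}=x^{-1})$. Lam's lemma on regular elements then says $x\in\CC_w^v\Rightarrow x^{-1}\in\CC_{\wo wv^{-1}}^{v^{-1}}$, which converts the previous relation into a second, independent equation; setting $v=\wo w$ and normalising solves the system. (A smaller issue: in Step~2 you expect $\psi_\dd\in\mathrm{int}(\CC\wo)$, but the AGRRCBT stays in $\CC$, so in fact $\psi_\dd\in\CC$; the $\wo$ in the final formula does not come from the location of $\psi_\dd$ but from Lam's inversion lemma.)
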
 

\begin{remark}
There is another natural interpretation of the reduced random combinatorial billiard trajectory in terms of the Demazure product (see \cref{subsec:general} for the definition). Let $(u_M,M)$ be the state of $\widetilde{\bf M}_{\dd}$ at time $M$. Let $\mathsf{w}(\dd)=\cdots s_{i_1}s_{i_0}$, and let $\mathsf{x}_M$ be the word obtained from $s_{i_{M-1}}\cdots s_{i_1}s_{i_0}$ by deleting each letter with probability $1-p$ (all independently). Then $u_M$ has the same distribution as the Demazure product of $\mathsf{x}_M$. This perspective elucidates a similarity between the present article and recent work of Morales, Panova, Petrov, and Yeliussizov \cite{MPPY}. Their main object of study is a random permutation obtained by starting with a particular reduced word in the symmetric group $\mathfrak{S}_n$, deleting letters randomly and independently, and then taking the Demazure product of the resulting subword (see also \cite{DefantPermuton}). 
\end{remark}   

\begin{remark}
One can view the aformentioned Markov chains introduced by Lam in \cite{Lam} as limits of our ``billiardized'' Markov chains in the regime when $p$ tends to $0$. 
\end{remark}

\subsection{Type~$A$}\label{subsec:introA} 

Assume now that $W$ and $\widetilde W$ are the symmetric group $\SSS_n$ and the affine symmetric group $\affS_n$. In this case, Lam \cite{Lam} observed/conjectured that his reduced random walk has especially nice properties. 

We can identify the index set $\widetilde I$ with $\ZZ/n\ZZ$ in such a way that $s_is_{i+1}s_i=s_{i+1}s_is_{i+1}$ for all $i\in\ZZ/n\ZZ$. Let $e_i$ be the $i$-th standard basis vector in $\mathbb R^n$. Then $\Phi=\Phi^+\sqcup\Phi^-$, where 
\[\Phi^+=\{e_i-e_j:1\leq i<j\leq n\}\quad\text{and}\quad\Phi^-=\{e_j-e_i:1\leq i<j\leq n\}.\] The spaces $V$ and $V^*$ can each be identified with 
\[\{(\gamma_1,\ldots,\gamma_n)\in\mathbb R^n:\gamma_1+\cdots+\gamma_n=0\}.\] We have \[\HH_{e_i-e_j}^k=\{(\gamma_1,\ldots,\gamma_n)\in V^*:\gamma_i-\gamma_j=k\}.\] 

Lam's Markov chain ${\bf M}_{\Lam}$ is isomorphic (in type~$A$) to an instance of a well-studied interacting particle system known as the \emph{multispecies TASEP}, which probabilists and statistical physicists began studying long before Lam's work \cite{AAMP, Derrida, EFM, FFK, FerrariMartin, Prolhac, Spitzer}. In \cite{Lam}, Lam conjectured that the vector $\psi_\Lam$ from \cref{thm:Lam} is a positive scalar multiple of $\sum_{\beta\in\Phi^+}\beta^\vee$ (this can fail to hold outside of type~$A$). By analyzing correlations in the multispecies TASEP, Ayyer and Linusson \cite{AyyerLinusson} proved the following result, which is a reformulation of Lam's conjecture. 

\begin{theorem}[{Ayyer--Linusson~\cite{AyyerLinusson}}]\label{thm:AyyerLinusson}
The vector $\psi_\Lam$ is a positive scalar multiple of 
\[\sum_{1\leq i<j\leq n}(j-i)(e_i-e_j).\] 
\end{theorem}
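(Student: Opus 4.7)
The plan is to expand $\psi_\Lam$ coordinate-wise in the basis $\{e_k\}_{k=1}^n$ and translate the coefficients into two-point correlations of the multispecies TASEP stationary distribution $\zeta_\Lam$ on the ring; the correlation analysis of Ayyer and Linusson, carried out via multiline queues, then supplies exactly the identity needed to finish.

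First I would fix the standard type-$A$ conventions: $\theta = e_1 - e_n$, under the standard identification $\theta^\vee$ corresponds to $e_1 - e_n$, and the right action of $\SSS_n$ on $V^*$ satisfies $e_j \cdot w = e_{w^{-1}(j)}$. Consequently, $w^{-1}\theta \in \Phi^+$ if and only if $w^{-1}(1) < w^{-1}(n)$, and $\theta^\vee w = e_{w^{-1}(1)} - e_{w^{-1}(n)}$. Writing
\[
P_{k,l} := \zeta_\Lam\bigl(\bigl\{w \in \SSS_n : w^{-1}(1) = k,\ w^{-1}(n) = l\bigr\}\bigr)
\]
for the joint stationary probability that species $1$ sits at site $k$ and species $n$ at site $l$, an immediate expansion shows
\[
(\psi_\Lam)_k = \sum_{l > k} P_{k,l} - \sum_{l < k} P_{l,k}.
\]

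Next I would exploit the cyclic-rotation invariance of $\zeta_\Lam$: since translating all particles by one site preserves the TASEP dynamics, $P_{k,l}$ depends only on $(l-k) \bmod n$, so I may write $P_{k,l} = Q((l-k) \bmod n)$ with $Q(0) = 0$ and $\sum_{d=1}^{n-1} Q(d) = 1/n$. A routine calculation then yields the telescoping identity
\[
(\psi_\Lam)_k - (\psi_\Lam)_{k+1} = Q(k) + Q(n-k).
\]
The crucial input from Ayyer and Linusson is that $Q(d) + Q(n-d)$ is independent of $d \in \{1, \ldots, n-1\}$; combined with $\sum_d Q(d) = 1/n$, this forces the common value to equal $\tfrac{2}{n(n-1)}$. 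Hence $(\psi_\Lam)_k$ is an arithmetic progression in $k$ with common difference $-\tfrac{2}{n(n-1)}$, and because $\psi_\Lam \in V$ its coordinates sum to zero. These two constraints uniquely determine $(\psi_\Lam)_k = \tfrac{n+1-2k}{n(n-1)}$, whereas the coefficient of $e_k$ in $\sum_{i<j}(j-i)(e_i - e_j)$ is $\tfrac{n(n+1-2k)}{2}$. Therefore $\psi_\Lam$ is the positive scalar multiple $\tfrac{2}{n^2(n-1)}$ of the target vector.

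The main obstacle is the constancy of $Q(d) + Q(n-d)$. This identity does not appear to follow from the elementary site- and label-reversal symmetries of the multispecies TASEP (each of those symmetries only reproduces the cyclic-shift invariance we have already used), and this is precisely where Ayyer and Linusson's combinatorial analysis of the two-point correlations via Ferrari--Martin multiline queues is needed. Everything else in the argument is linear-algebraic bookkeeping.
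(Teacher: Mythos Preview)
There is a genuine gap at the rotation-invariance step. Your mathematical definition $P_{k,l}=\zeta_\Lam(\{w:w^{-1}(1)=k,\,w^{-1}(n)=l\})$ does give the correct coordinate expansion $(\psi_\Lam)_k=\sum_{l>k}P_{k,l}-\sum_{l<k}P_{l,k}$. However, under the paper's (and Lam's) identification of $\SSS_n$ with TASEP states via $w\mapsto(w^{-1}(1),\ldots,w^{-1}(n))$, the quantity $w^{-1}(i)$ is the \emph{species occupying site $i$}, not the site of species $i$. Thus $P_{k,l}$ is the probability that site $1$ holds species $k$ and site $n$ holds species $l$; the indices $k,l$ are species labels. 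Cyclic site-rotation invariance of $\zeta_\Lam$ says that the probability of having species $k$ at site $1$ and species $l$ at site $n$ equals the probability of having species $k$ at site $2$ and species $l$ at site $1$, and so on---it shifts the sites, not the species---so it does \emph{not} imply that $P_{k,l}$ depends only on $(l-k)\bmod n$.

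Your verbal interpretation (``species $1$ sits at site $k$ and species $n$ at site $l$'') would indeed yield rotation invariance in $(k,l)$, but then the coordinate formula for $(\psi_\Lam)_k$ fails: since $\theta^\vee w=e_{w^{-1}(1)}-e_{w^{-1}(n)}$, a term contributes to coordinate $k$ exactly when species $k$ sits at site $1$ or site $n$, not when species $1$ or $n$ sits at site $k$. The two halves of your argument therefore use incompatible readings of $P_{k,l}$, and the subsequent claim about $Q(d)+Q(n-d)$ rests on this conflation. The paper (following Ayyer--Linusson, and carried out here as the $p=0$ case of Proposition~\ref{prop:correlations}) instead computes the correlations $E_{j,i}$ (species $i$ at site $1$, species $j$ at site $n$, $i<j$) explicitly by projecting to a three-species TASEP and counting multiline queues, obtaining $E_{j,i}\propto j-i$; this explicit species-dependent formula is precisely the nontrivial content that your symmetry shortcut tries to bypass.
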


Let $[n]=\{1,\ldots,n\}$. Let $\ddd^{(n)}=-ne_n+\sum_{j\in[n]}e_j$ be the vector in $V^*$ whose last component is $-(n-1)$ and whose other components are all equal to $1$. As before, fix a point $z_0$ in the interior of $\BB$. One can show that $\ddd^{(n)}\in\Upsilon_{z_0}$. Moreover, $N_{\ddd^{(n)}}=n$, and
$\mathsf{w}(\ddd^{(n)})=\cdots s_{i_1}s_{i_0}=\cdots\mathsf{c}\mathsf{c}\mathsf{c}$, where $\mathsf{c}=s_{n-1}\cdots s_{1} s_{0}$ is a reduced word for a particular \emph{Coxeter element} of $\affS_n$. Thus, $i_j=j\in\ZZ/n\ZZ$ for all $j\geq 0$. 

Ferrari and Martin \cite{FerrariMartin} described the stationary distribution of the multispecies TASEP in terms of \emph{multiline queues}, and Corteel, Mandelshtam, and Williams \cite{CMW} (building off of work of Cantini, de Gier, and Wheeler \cite{CdGW}) interpreted this distribution in terms of specializations of certain \emph{ASEP polynomials}. In particular, their results can be used to compute the distribution $\zeta_\Lam$. (Lam was apparently unaware of the work by Ferrari and Martin when he wrote his article \cite{Lam}.) \cref{thm:LamBilliards,thm:LamBilliards2} motivate us to study ${\bf M}_{{\ddd^{(n)}}}$, which we view as a ``billiardization'' of ${\bf M}_\Lam$. In \cref{sec:ring}, we will 
define a new variant of the multispecies TASEP that we call the \emph{stoned multispecies TASEP}. Surprisingly, we can compute the stationary distribution of this Markov chain in terms of multiline queues and ASEP polynomials. In a special case, the stoned multispecies TASEP is (essentially) the same as ${\bf M}_{\ddd^{(n)}}$. We will (quite unexpectedly) be able to use multiline queues to analyze correlations in the stoned multispecies TASEP (\cref{prop:correlations}). This will in turn allow us to obtain the following analogue of Ayyer and Linusson's \cref{thm:AyyerLinusson}. 

\begin{theorem}\label{thm:AyyerLinussonBilliards} 
The vector $\psi_{\ddd^{(n)}}$ is a positive scalar multiple of 
\[\sum_{1\leq i<j\leq n}\frac{(j-i)(2n-(i+j-1)p)}{(n-ip)(n-(i-1)p)(n-jp)(n-(j-1)p)}(e_i-e_j).\]
\end{theorem}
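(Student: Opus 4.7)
The plan is to unpack the formula for $\psi_{\ddd^{(n)}}$ into a weighted sum of coroot vectors, then reduce its coefficients to explicit two-point correlations in the stoned multispecies TASEP that can be evaluated via multiline queues. To begin, I observe that because $\mathsf{w}(\ddd^{(n)})=\cdots\mathsf{c}\mathsf{c}\mathsf{c}$ with $\mathsf{c}=s_{n-1}\cdots s_1 s_0$, the letters of the defining word satisfy $i_k\equiv k\pmod n$. In particular, the condition $i_k=0$ in $\ZZ/N_{\ddd^{(n)}}\ZZ=\ZZ/n\ZZ$ selects $k=0$ uniquely, so
\[\psi_{\ddd^{(n)}}=\sum_{\substack{w\in\SSS_n\\ w^{-1}\theta\in\Phi^+}}\zeta_{\ddd^{(n)}}(w,0)\,\theta^\vee w.\]
In type $A_{n-1}$ we have $\theta^\vee=\theta=e_1-e_n$, and under the right action of $\SSS_n$ on $V^*$ the vector $\theta^\vee w$ is itself a difference of two basis vectors, while the condition $w^{-1}\theta\in\Phi^+$ amounts to a simple inequality between the positions of two distinguished indices in $w$.

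Next, I use the correspondence established in \cref{sec:ring} between ${\bf M}_{\ddd^{(n)}}$ and the stoned multispecies TASEP to interpret $\zeta_{\ddd^{(n)}}(w,0)$ as the stationary probability of a configuration in that particle system. After grouping terms, the coefficient of each $e_i-e_j$ (with $i<j$) in $\psi_{\ddd^{(n)}}$ becomes a two-point marginal: the stationary probability that the two extreme species (species $1$ and species $n$) occupy a prescribed pair of sites, with the sign determined by the action of $w$. Using the standard projection property of multiline queues, collapsing the intermediate species to a single species reduces this marginal to a two-species correlation in a stoned system with species $\{1,*,n\}$, which is precisely what \cref{prop:correlations} computes.

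Finally, I apply the correlation formula from \cref{prop:correlations} and simplify. The four denominator factors $(n-ip)$, $(n-(i-1)p)$, $(n-jp)$, $(n-(j-1)p)$ should arise from the rational weights attached to the queue columns at positions $i$ and $j$ once the stoning parameter $p$ is threaded through the multiline construction, while the numerator $(j-i)(2n-(i+j-1)p)$ should emerge after a telescoping or partial-fraction collapse over the admissible queue configurations between positions $i$ and $j$. Positivity of the overall scalar follows because each correlation is a nonnegative probability and $\psi_{\ddd^{(n)}}$ is nonzero by \cref{thm:LamBilliards}. The main obstacle lies in the multiline-queue bookkeeping in the second and third steps: this is the $p$-deformed analogue of the identity Ayyer and Linusson established to prove \cref{thm:AyyerLinusson}, and propagating the stoning probabilities through the queue weights so that the resulting sums collapse to the stated rational function is where essentially all of the combinatorial work resides.
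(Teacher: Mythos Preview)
Your overall strategy matches the paper's: reduce $\psi_{\ddd^{(n)}}$ to a sum of two-point correlations in the stoned multispecies TASEP and then evaluate those correlations via multiline queues (this is exactly how \cref{prop:correlations} is used). However, there is a genuine error in your identification of the correlation, and the crucial combinatorial mechanism is missing from your sketch.

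First, the identification. Under the right action one has $\theta^\vee w=(e_1-e_n)w=e_{w^{-1}(1)}-e_{w^{-1}(n)}$, so the coefficient of $e_i-e_j$ collects those $w$ with $w^{-1}(1)=i$ and $w^{-1}(n)=j$. In the particle picture $\mu_k=w^{-1}(k)$, this is the event $\mu_1=i,\ \mu_n=j$: the particle of species $i$ sits at site~$1$ and the particle of species $j$ sits at site~$n$, with the stone at site~$n$. It is \emph{not} the event that the extreme species $1$ and $n$ occupy sites $i$ and $j$. Consequently the relevant projection is not to $\{1,*,n\}$; rather, for each pair $(i,j)$ one needs the family of three-species projections $\lambda(k,\ell)$ (with $k$ zeros, $\ell$ ones, $n-k-\ell$ twos) and then recovers $E_{j,i}$ from the quantities $D_{k,\ell}=\mathbb P(\mu_1=1,\mu_n=2)$ by inclusion--exclusion, as in \eqref{eq:PIE}. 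Your single projection to $\{1,*,n\}$ cannot separate the different $(i,j)$.

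Second, the mechanism that makes the computation tractable is not a generic ``telescoping or partial-fraction collapse.'' The key observation (which you do not mention) is that every multiline queue contributing to the event $\mu_1=1,\ \mu_n=2$ in the three-species system has \emph{no ball in column $n$} in rows $0$ and $1$; hence its weight is independent of $x_n$. This lets one replace $x_n=1-p$ by $x_n=1$ in the numerator and import Ayyer--Linusson's $p=0$ count verbatim. The $p$-dependence then enters only through the partition function $\sum_\mu F_\mu(1,\dots,1,1-p;0)$, which factors row-by-row as a product of two binomial-type terms. Substituting these into \eqref{eq:Dkl} and then into \eqref{eq:PIE} is what produces the four linear denominator factors and the numerator $(j-i)(2n-(i+j-1)p)$. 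Without this column-$n$ observation, ``propagating the stoning probabilities through the queue weights'' does not simplify in any evident way.
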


Note that sending $p$ to $0$ in \cref{thm:AyyerLinussonBilliards} recovers Ayyer and Linusson's \cref{thm:AyyerLinusson}. 

By expressing $\zeta_{{\ddd^{(n)}}}$ and $\zeta_{-{\ddd^{(n)}}}$ in terms of ASEP polynomials, we will also make \cref{thm:LamBilliards2} more explicit when $\dd=\ddd^{(n)}$. We refer to \cref{subsec:setup_ring} for the definitions of the ASEP polynomials $F_\mu$ and Macdonald polynomials $P_\lambda$ appearing in the next theorem; see also \cref{subsec:multiline_queues} for combinatorial interpretations of these polynomials in terms of multiline queues. Note that, throughout this article, the parameter $q$ traditionally appearing in ASEP polynomials and Macdonald polynomials is set to equal $1$ and is therefore omitted from the notation. 

\begin{theorem}\label{cor:LamBilliards2}
Let $F_\mu$ and $P_\lambda$ denote the ASEP polynomial indexed by $\mu$ and the Macdonald polynomial indexed by $\lambda$, respectively. Choose $\epsilon\in\{\pm 1\}$ uniformly at random, and let $(u_M,M)\in\affS_n\times\ZZ/n\ZZ$ denote the state of $\widetilde{\bf M}_{\epsilon\ddd^{(n)}}$ at time $M$. There exists $w_*\in W$ such that $u_M\in\CC w_*$ for all sufficiently large $M$. For each $w\in W$, we have 
\[\mathbb P(w_*=w)=\frac{1}{2}\left(\frac{F_{w^{-1}\wo}(1,\ldots,1,1-p;0)}{P_{(1,2,\ldots,n)}(1,\ldots,1,1-p;0)}+\frac{F_{w^{-1}\wo}(1,\ldots,1,(1-p)^{-1};0)}{P_{(1,2,\ldots,n)}(1,\ldots,1,(1-p)^{-1};0)}\right).\]
\end{theorem}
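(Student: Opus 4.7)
The plan is to derive this corollary from \cref{thm:LamBilliards2} by substituting explicit formulas for the two stationary probabilities $\zeta_{\pm\ddd^{(n)}}(w^{-1}\wo,0)$. From \cref{subsec:introA} we have $N_{\ddd^{(n)}}=n$ and $\mathsf{w}(\ddd^{(n)})=\cdots\mathsf{c}\mathsf{c}\mathsf{c}$, where $\mathsf{c}=s_{n-1}\cdots s_1 s_0$ is a reduced word for a Coxeter element; in particular, $i_k\equiv k\bmod n$. Specializing \cref{thm:LamBilliards2} to $\dd=\ddd^{(n)}$ immediately yields
\[
\mathbb P(w_*=w)=\frac{n}{2}\bigl(\zeta_{\ddd^{(n)}}(w^{-1}\wo,0)+\zeta_{-\ddd^{(n)}}(w^{-1}\wo,0)\bigr),
\]
so it suffices to match each of the two summands with one of the quotients of $F$ and $P$ on the right-hand side of the statement.

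For the first term, the plan is to identify $\mathbf{M}_{\ddd^{(n)}}$ with a special case of the stoned multispecies TASEP introduced later in \cref{sec:ring}. Under this identification the $\ZZ/n\ZZ$-coordinate evolves deterministically, advancing by $1$ at each tick, so the joint stationary distribution factors as $\zeta_{\ddd^{(n)}}(w,k)=\tfrac{1}{n}\pi_k(w)$ for probability distributions $\pi_k$ on $W$; the factor of $n$ appearing in \cref{thm:LamBilliards2} is present precisely to cancel this normalization. The main external ingredient, to be supplied by the paper's analysis of the stoned multispecies TASEP, is a multiline-queue description of the stationary distribution expressing $\pi_0(w^{-1}\wo)$ as $F_{w^{-1}\wo}(1,\ldots,1,1-p;0)/P_{(1,2,\ldots,n)}(1,\ldots,1,1-p;0)$. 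This extends the Ferrari--Martin construction \cite{FerrariMartin} together with the ASEP-polynomial interpretation of Cantini--de Gier--Wheeler and Corteel--Mandelshtam--Williams \cite{CdGW,CMW} from the usual multispecies TASEP to the stoned variant.

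For the second term the plan is to relate $\mathbf{M}_{-\ddd^{(n)}}$ to the same stoned multispecies TASEP data, but with the roles of the two sides of each hyperplane interchanged. Reversing the initial direction of the beam effectively swaps the ``pass-through'' probability $p$ with $1-p$ and reverses the cyclic order of the simple reflections that govern the transitions. At the level of nonsymmetric polynomials and multiline queues I expect this symmetry to correspond to the substitution $x_n\mapsto x_n^{-1}$ together with an involution of the indexing permutation that preserves the transformation $w\mapsto w^{-1}\wo$; combined with the standard homogeneity of $F_\mu$ and $P_\lambda$ under simultaneous rescaling of the variables, this should yield the matching
\[
n\cdot\zeta_{-\ddd^{(n)}}(w^{-1}\wo,0)=\frac{F_{w^{-1}\wo}(1,\ldots,1,(1-p)^{-1};0)}{P_{(1,2,\ldots,n)}(1,\ldots,1,(1-p)^{-1};0)}.
\]
Adding the two contributions and dividing by $2$ gives the claimed formula.

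The main obstacle will be justifying this last step cleanly: one has to show that reversing the beam direction corresponds exactly to the reciprocal specialization $(1,\ldots,1,1-p;0)\rightsquigarrow(1,\ldots,1,(1-p)^{-1};0)$, with no additional permutation of the other variables and no shift in the indexing permutation. I anticipate carrying this out either by constructing a direct bijection on multiline queues that intertwines the two stoned TASEP dynamics, or, more abstractly, by invoking a duality of ASEP polynomials under reciprocation of a single variable—both approaches being naturally suggested by the multiline-queue framework that the paper develops for the stoned multispecies TASEP.
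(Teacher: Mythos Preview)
Your reduction to \cref{thm:LamBilliards2} and your handling of the $+\ddd^{(n)}$ term are on target and match the paper: one identifies ${\bf M}_{\ddd^{(n)}}$ with the stoned multispecies TASEP of \cref{subsec:billiards1} (one stone of density~$1$, the rest of density~$2$), and \eqref{eq:zeta_dd} at $t=0$ gives $n\,\zeta_{\ddd^{(n)}}(w^{-1}\wo,0)=F_{w^{-1}\wo}(1,\dots,1,1-p;0)/P_{(1,\dots,n)}(1,\dots,1,1-p;0)$.

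The gap is in your treatment of the $-\ddd^{(n)}$ term. First, the assertion that reversing the beam ``swaps the pass-through probability $p$ with $1-p$'' is simply false: the pass-through probability is still $p$, but the beam now approaches each toric hyperplane from the \emph{other} side (the one where the probability is $0$ rather than $p$). Second, the hoped-for duality $x_n\mapsto x_n^{-1}$ is not a symmetry of type-$A$ ASEP polynomials (they are genuine polynomials, not Laurent polynomials), and combining it with homogeneity would land you at $F_{w^{-1}\wo}(1-p,\dots,1-p,1;0)$, which is not what you want. So the symmetry route, as proposed, does not close.

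The paper's argument (in \cref{subsec:reducedA}) is more direct and avoids any duality. By \cref{lem:makes_sense}, the word $\mathsf{w}(-\ddd^{(n)})$ is the reverse cyclic word, so one needs an auxiliary TASEP in which the distinguished stone moves \emph{counterclockwise}. This is achieved by a \emph{different} density function: take $\rrho(\s_1)=\cdots=\rrho(\s_{n-1})=1$ and $\rrho(\s_n)=2$, exactly the setup of \cref{exam:1stone}. Now the single high-density stone $\s_n$ drifts left. Solving $p(j,n)=p$ in \eqref{eq:p(jj')} at $t=0$ with $\chi_1=\cdots=\chi_{n-1}=1$ forces $\chi_n=(1-p)^{-1}$; the reciprocal is not a miracle of polynomial symmetry but a direct consequence of the asymmetry between the low- and high-density slots in \eqref{eq:p(jj')}. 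Applying \cref{cor:main_ASEP} (via \eqref{eq:pi2}) then gives $n\,\zeta_{-\ddd^{(n)}}(w^{-1}\wo,0)=F_{w^{-1}\wo}(1,\dots,1,(1-p)^{-1};0)/P_{(1,\dots,n)}(1,\dots,1,(1-p)^{-1};0)$, and summing the two contributions completes the proof.
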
 

\begin{example}
Let $n=3$ so that $\ddd^{(3)}=(1,1,-2)$. Using \cref{thm:AyyerLinussonBilliards}, we compute that \[\langle\psi_{\ddd^{(n)}}\rangle=\langle(3-2p,p,p-3)\rangle.\] \cref{fig:Trajectory} illustrates this when $p=3/4$; the six red dotted rays point in the directions of the vectors in $\langle(1.5,0.75,-2.25)\rangle\SSS_3$. Note that (by \cref{thm:AyyerLinusson}) \[\lim_{p\to 0}\langle\psi_{\ddd^{(n)}}\rangle=\langle(3,0,-3)\rangle=\langle\psi_\Lam\rangle.\] 
As in \cref{thm:LamBilliards2}, let us choose $\epsilon\in\{\pm 1\}$ uniformly at random and define $w_*$ to be the element of $\SSS_3$ such that the reduced random billiard trajectory with initial direction $\epsilon{\ddd^{(n)}}$ (and starting point $z_0$) eventually stays within $\CC w_*$. One can use \cref{cor:LamBilliards2} to compute that 
\begin{alignat*}{3}
\mathbb P(w_*=123)&=\frac{2-2p+p^2}{18-18p+4p^2},\quad \mathbb P(w_*=132)&&=\frac{2-2p}{9-9p+2p^2},\quad \mathbb P(w_*=213)&&=\frac{4-4p+p^2}{18-18p+4p^2}, \\ \mathbb P(w_*=231)&=\frac{2-2p+p^2}{18-18p+4p^2},\quad \mathbb P(w_*=312)&&=\frac{1-p}{9-9p+2p^2},\quad \mathbb P(w_*=321)&&=\frac{4-4p+p^2}{18-18p+4p^2}.
\end{alignat*}
\end{example} 

\begin{example}
Let $n=4$ so that $\ddd^{(4)}=(1,1,1,-3)$. Using \cref{thm:AyyerLinussonBilliards}, we compute that \[\langle\psi_{\ddd^{(4)}}\rangle=\langle(24-30p+9p^2,8-2p-3p^2,-8+14p-3p^2,-24+18p-3p^2)\rangle.\] Note that (by \cref{thm:AyyerLinusson}) \[\lim_{p\to 0}\langle\psi_{\ddd^{(4)}}\rangle=\langle(24,8,-8,-24)\rangle=\langle\psi_\Lam\rangle.\] 
\end{example} 

\subsection{Core Partitions} 
An \dfn{$n$-core} is an integer partition that does not have any hook lengths divisible by $n$. Such partitions are important due to their prominence in partition theory \cite{Armstrong,BrunatNath,Garvan}, the (co)homology of the affine Grassmannian \cite{LLMS}, tiling theory \cite{DLPY, Pak}, and representation theory \cite{GranvilleOno, JamesKerber}.  

There is a natural one-to-one correspondence between $n$-cores and alcoves of $\mathcal H_{\affS_n}$ inside the fundamental chamber $\CC$. Using this correspondence, Lam interpreted his affine Grassmannian reduced random walk as a random growth process for $n$-cores and also as a periodic variant of the TASEP on $\ZZ$. He showed that his conjecture about the specific form of $\psi_\Lam$, which later became Ayyer and Linusson's \cref{thm:AyyerLinusson}, implies an exact description of limit shapes of the (appropriately scaled) Young diagrams in this random growth process. As $n$ tends to $\infty$, these limit shapes converge to the region 
\[{\bf R}=\{(x,y)\in\mathbb R^2:y\leq 0\leq x,\, \sqrt{x}+\sqrt{-y}\leq 6^{1/4}\};\] see \cite{AyyerLinusson,Lam}. Note that ${\bf R}$ is also the limit shape that Rost derived for the \emph{corner growth process}, a more classical random growth process for partitions corresponding to the TASEP on $\ZZ$ \cite{Romik,Rost}.  

In \cref{sec:correlations}, we will interpret $\widehat {\bf M}_{\ddd^{(n)}}$ as a random growth process for $n$-cores and as a variant of Johansson's particle process \cite{Johansson}. Using \cref{thm:AyyerLinussonBilliards}, we will obtain an exact description of the limit shape of our random growth process. We will also see that as $n\to\infty$, these limit shapes converge to the region 
\[{\bf R}_\infty^{(p)}=\{(x,y)\in\mathbb R^2:y\leq 0\leq x,\, \sqrt{(1-p)x}+\sqrt{-y}\leq (6(1-p))^{1/4}\}.\] The remarkably simple form of the region ${\bf R}_\infty^{(p)}$ is ultimately due to some especially nice properties of multiline queues. 

In the limit as $n\to\infty$, our random growth process yields the TASEP with gemetric jumps, which has been introduced under a variety of names in the literature and appears in other articles such as \cite{DefantPermuton,MPPY,VK,Rajewsky,Rakos}.  

\subsection{Stoned Exclusion Processes}

While we have so far discussed ways to ``billiardize'' results due to Lam \cite{Lam} and Ayyer--Linusson \cite{AyyerLinusson}, there are further goals of the present article that come from generalizing and modifying the random toric combinatorial billiard trajectory ${\bf M}_{{\ddd^{(n)}}}$. As an initial step, we can introduce a parameter $t\in[0,1)$ and change the dynamics so that when the beam of light hits a toric hyperplane $\qqq(\HH)$, it passes through with probability either $p$ or $pt$, depending on the side of $\qqq(\HH)$ that it hits (setting $t=0$ recovers ${\bf M}_{{\ddd^{(n)}}}$). Alternatively, we could choose positive real parameters $a_1,\ldots,a_n$ and modify the dynamics so that the probability of the light passing through a hyperplane $\qqq(\HH_{i,j}^k)$ (with $1\leq i<j\leq n$) depends on $a_j$. We could also consider a different variant in which we take $\widetilde W$ to be the affine Weyl group of type~$C$. All of these variants (which we will define rigorously in \cref{subsec:billiards1,subsec:billiards2,subsec:billiards3}) can be seen as billiardizations of interacting particle systems that have already received significant attention. We will find that the stationary distributions of these Markov chains are given by important polynomials from the literature. 

By viewing our billiardized particle systems purely combinatorially, we can generalize (and slightly modify) them further to what we call \emph{stoned exclusion processes}. These are variants of vigorously-studied stochastic models in which particles hop along a $1$-dimensional lattice subject to the constraint that two or more particles cannot occupy the same site. We will find that the stationary distributions of these stoned exclusion processes are governed by polynomials from the literature. In several cases, it was previously an open problem to find Markov chains with stationary distributions given by those polynomials. We view this as an endorsement of combinatorial billiards since it was the contemplation of combinatorial billiards that led us naturally to solutions to these problems.  

One of the prototypical examples of an interacting particle system is the \emph{asymmetric simple exclusion process} (ASEP), which dates back to the incredibly influential work of Spitzer \cite{Spitzer}. Building off of work of Martin \cite{Martin} and Cantini, de Gier, and Wheeler \cite{CdGW}, Corteel, Mandelshtam, and Williams \cite{CMW} introduced \emph{ASEP polynomials}, which are certain polynomials in variables $x_1,\ldots,x_n$ whose coefficients are rational functions in a parameter $t$.\footnote{ASEP polynomials are also defined with an additional parameter $q$. However, we will only be concerned with the case in which $q=1$. Thus, whenever we consider ASEP polynomials or their variants, we will do so without mentioning $q$.} When ${x_1=\cdots=x_n=1}$, these articles show that ASEP polynomials yield the stationary distribution of the multispecies ASEP with $n$ sites and that the partition function of this multispecies ASEP is a Macdonald polynomial. Corteel, Mandelshtam, and Williams also provided combinatorial expansions of ASEP polynomials using multiline queues, and they found that ASEP polynomials are actually special instances of the \emph{permuted-basement Macdonald polynomials} introduced by Ferreira~\cite{Ferreira}. 
  
It was an open problem to find a variant of the multispecies ASEP whose stationary distribution is determined by ASEP polynomials with generic choices of parameters $x_1,\ldots,x_n$ rather than the specialization $x_1=\cdots=x_n=1$. By generalizing the Markov chain ${\bf M}_{{\ddd^{(n)}}}$ from \cref{subsec:introA}, we will define the \emph{stoned multispecies ASEP}, which has the desired stationary distribution (\cref{thm:main_ASEP}). Loosely speaking, a state of this Markov chain is obtained by superimposing a state from the multispecies ASEP with a state from a multispecies TASEP that we call the \emph{auxiliary TASEP}; we refer to the objects that hop in the auxiliary TASEP as \emph{stones} in order to avoid confusion with the \emph{particles} in the multispecies ASEP (the use of stones comes from the articles \cite{DefantRefractions,DefantPermutoric}). The dynamics of the stoned multispecies ASEP is as follows. The auxiliary TASEP runs as a usual multispecies TASEP. Whenever two stones sitting on adjacent sites swap places, they send a signal to the particles sitting on those same sites saying that they should swap. The signal succeeds in actually reaching the particles with some probability that depends on the stones that swapped; the particles do not move if they do not receive the signal. If the signal does reach the particles, then with some probability (depending on which of the particles has a larger species), they decide to actually follow their orders and swap places (with the complementary probability, they stubbornly ignore the signal and do not move). 
In summary, while the particle swaps in the usual multispecies ASEP are governed by the selection of uniformly random edges of the ring, the particle swaps in the stoned multispecies ASEP are governed by the stone swaps in the auxiliary TASEP. 

We will also define stoned versions of other exclusion processes such as the \emph{inhomogeneous TASEP} and the \emph{multispecies open boundary ASEP}. We will show that the stoned inhomogeneous TASEP has a stationary distribution given by certain polynomials introduced by Cantini \cite{Cantini} that we call \emph{inhomogeneous TASEP polynomials} (see \cref{thm:main_iTASEP}). We will also show that the stoned multispecies open boundary ASEP has a stationary distribution given by the \emph{open boundary ASEP polynomials} from \cite{CGdGW, CMW2} (see \cref{thm:main_obASEP}). It was previously not known how to construct Markov chains with these stationary distributions. 

Our derivation of the stationary distribution of each of our stoned exclusion processes is actually quite simple. This is because, roughly speaking, the stones serve the purpose of ``locally factoring'' the transitions from the ordinary version of the exclusion process. In this way, the Markov chain balance equations end up being essentially equivalent to known relations among the polynomials that determine the stationary distribution. 

\begin{remark}\label{rem:tPush}
Ayyer, Martin, and Williams \cite{AMW} recently studied a Markov chain called the \emph{inhomogeneous $t$-PushTASEP}, which is quite different from our stoned exclusion processes (see also the earlier work of Ayyer and Martin \cite{AyyerMartinNew} in the case where $t=0$); they found that its stationary distribution is also given by ASEP polynomials with generic choices of $x_1,\ldots,x_n$. We discovered stoned exclusion processes independently of their work while considering random billiard trajectories. A major advantage of our approach using stones is that it easily adapts to the inhomogeneous TASEP and the multispecies open boundary ASEP; it is not known how to adapt the $t$-PushTASEP to those settings. 
\end{remark} 

\subsection{Outline} Our plan for the rest of the paper is as follows. 

\begin{itemize}
\item \cref{sec:preliminaries} presents additional background that we will need in subsequent sections. 

\item \cref{sec:LamProof} presents the proofs of \cref{thm:LamBilliards,thm:LamBilliards2}. 

\item In \cref{sec:ring}, we discuss the multispecies ASEP and define its stoned variant. Our main results of this section (\cref{thm:main_ASEP,cor:main_ASEP}) compute the stationary distribution of the stoned multispecies ASEP in terms of ASEP polynomials. We also explain how to interpret a special case of the stoned multispecies ASEP in terms of billiards, and we deduce \cref{cor:LamBilliards2}. 

\item In \cref{sec:correlations}, we define multiline queues and employ them to study correlations in the stoned multispecies TASEP; this allows us to prove \cref{thm:AyyerLinussonBilliards} and to compute limit shapes of our new random growth process on $n$-cores. 

\item In \cref{sec:inhomogeneous}, we consider the inhomogeneous TASEP. We define the stoned inhomogeneous TASEP, compute its stationary distribution in terms of inhomogeneous TASEP polynomials (\cref{thm:main_iTASEP}), and discuss how to interpret a special case of it in terms of billiards. 

\item In \cref{sec:open}, we consider the multispecies open boundary ASEP. We define the stoned multispecies open boundary ASEP, compute its stationary distribution in terms of open boundary ASEP polynomials (\cref{thm:main_obASEP}), and discuss how to interpret a special case of it in terms of billiards in type~$C$. 

\item As a general philosophy, we believe that combining randomness with notions from combinatorial billiards leads to fruitful yet unexplored stochastic models, several of which are natural variants/generalizations of well-studied models. \cref{sec:other} exemplifies this philosophy by suggesting numerous promising directions for future work. 
\end{itemize}

\section{Preliminaries}\label{sec:preliminaries}

\subsection{Notation} 
Throughout this work, we fix $t\in[0,1)$ and a tuple ${\lambda=(\lambda_1,\ldots,\lambda_n)\in\mathbb Z^n}$ such that ${0\leq\lambda_1\leq\cdots\leq\lambda_n}$. Let $S_{\lambda}$ be the set of tuples that can be obtained by rearranging the parts of $\lambda$. We always let $\mu_i$ denote the $i$-th part of a tuple $\mu$. We let ${\bf x}$ denote the tuple of variables $(x_1,\ldots,x_n)$. Given integers $k,k'\in\ZZ$ and $r\in\mathbb R$, let 
\begin{equation}\label{eq:h}
\hh_r(k,k')=\begin{cases} 1 & \mbox{if }k> k'; \\   r & \mbox{if }k<k'; \\   0 & \mbox{if }k=k'.\end{cases}
\end{equation} 

\subsection{General Weyl Groups}\label{subsec:general} 

Let us expound on the discussion from \cref{subsec:IntroWeyl}. We assume familiarity with the basic theory of root systems and Weyl groups; standard references include \cite{BjornerBrenti,Humphreys}.  

By definition, the Weyl group $W$ is the group generated by the reflections through the hyperplanes orthogonal to the roots in $\Phi$. Thus, we have $w\beta\in\Phi$ for all $w\in W$ and $\beta\in\Phi$. If we identify the spaces $V$ and $V^*$ via the usual pairing, then the coroots are defined by 
\[\beta^\vee=\frac{2}{(\beta,\beta)}\beta,\]
where $(\cdot,\cdot)$ is the Euclidean inner product. 
Let $Q^\vee=\mathrm{span}_{\ZZ}\{\beta^\vee:\beta\in\Phi\}$ denote the coroot lattice. The space $\mathbb T=V^*/Q^\vee$ is a torus of dimension $|I|$; we let $\qqq\colon V^*\to\mathbb T$ denote the natural quotient map. Applying this quotient map to a hyperplane $\HH\in\mathcal H_{\widetilde W}$ yields a \dfn{toric hyperplane} $\qqq(\HH)$. For $\dd\in Q^\vee$, let $\tau_\dd$ denote the element of $\widetilde W$ that acts on $V^*$ via the translation by $-\dd$; that is, $\gamma\tau_\dd=\gamma-\dd$ for all $\gamma\in V^*$. The injective map $\dd\mapsto\tau_\dd$ allows us to identify $Q^\vee$ with a subgroup of $\widetilde W$. In this way, $\widetilde W$ decomposes as the semidirect product $\widetilde W\cong W\ltimes Q^\vee$. Thus, for each $w\in \widetilde W$, there are unique elements $\overline w\in W$ and $\dd_w\in Q^\vee$ such that $w=\overline w\tau_{\dd_w}$. In particular, $\dd_{s_0}=-\theta^\vee$, so 
\begin{equation}\label{eq:s0theta}
s_0=\overline s_0\tau_{-\theta^\vee}. 
\end{equation}
Note that the alcove $\BB w$ is obtained by translating $\BB\overline w$ by the vector $-\dd_w$.  

Let $\idd$ denote the identity element of $\widetilde W$ (and also of $W$). For $i,i'\in\widetilde I$, let $m(i,i')=m(i',i)$ denote the order of the element $s_{i}s_{i'}$ in $\widetilde W$. Since $W$ and $\widetilde W$ are Coxeter groups, they have the \dfn{Coxeter presentations} 
\[W=\langle S:(s_is_{i'})^{m(i,i')}=\idd\text{ for all }i,i'\in I\rangle\quad\text{and}\quad \widetilde W=\langle \widetilde S:(s_is_{i'})^{m(i,i')}=\idd\text{ for all }i,i'\in \widetilde I\rangle.\] 
A \dfn{reduced word} for an element $w\in\widetilde W$ is a minimum-length word over the alphabet $\widetilde S$ that represents $w$. The length of a reduced word for $w$ is called the \dfn{length} of $w$. The finite Weyl group $W$ has a unique element $\wo$ of maximum length. It is well known that $\wo$ is an involution. 

Given a positive integer $d$ and symbols $\phi$ and $\phi'$, we write 
\begin{equation}\label{eq:mid}
[\phi\mid\phi']_d=\underbrace{\cdots\phi'\phi\phi'}_{d}
\end{equation} for the string of length $d$ that alternates between $\phi$ and $\phi'$ and ends with $\phi'$. 
The \dfn{$0$-Hecke algebra} of $\widetilde W$ is the $\mathbb C$-algebra with generators $\TT_i$ for $i\in\widetilde I$ subject to the following relations: 
\begin{alignat*}{2}
\TT_i^2&=\TT_i \quad &&\text{for all }i\in\widetilde I; \\ 
[\TT_i\mid \TT_{i'}]_{m(i,i')}&=[\TT_{i'}\mid \TT_i]_{m(i,i')} \quad &&\text{for all }i,i'\in\widetilde I. 
\end{alignat*} 
For $w\in\widetilde W$, let $\TT_w=\TT_{j_\ell}\cdots\TT_{j_1}$, where $s_{j_\ell}\cdots s_{j_1}$ is a reduced word for $w$; the defining relations of the $0$-Hecke algebra ensure that $\TT_w$ is well defined (i.e., does not depend on the choice of the reduced word for $w$). The elements $\TT_w$ for $w\in \widetilde W$ form a linear basis of the $0$-Hecke algebra of $\widetilde W$. Given an arbitrary finite word $\mathsf{x}=s_{k_r}\cdots s_{k_1}$ over the alphabet $\widetilde S$, there is a unique element $y\in \widetilde W$ such that $\TT_{k_r}\cdots\TT_{k_1}=\TT_y$; this element $y$ is called the \dfn{Demazure product} of the word $\mathsf{x}$.

\subsection{Type~$A$}\label{subsec:typeA}  
Let $e_i$ denote the $i$-th standard basis vector of $\mathbb R^n$. 
The root system of type $A_{n-1}$ is the collection of vectors $e_i-e_j$ for distinct $i,j\in[n]$. In this setting, we have \[V=V^*=\{(\gamma_1,\ldots,\gamma_n)\in\mathbb R^n:\gamma_1+\cdots+\gamma_n=0\}.\] The index set $I$ is $[n-1]$, and the simple root $\alpha_i$ is $e_i-e_{i+1}$. The highest root is $\theta=e_1-e_n$. The coroot lattice is $Q^\vee=\mathbb Z^n\cap V^*$. We identify the Weyl group $W$ with the symmetric group $\SSS_n$; under this identification, the simple reflection $s_i=\overline s_i$ (for $1\leq i\leq n-1$) is the transposition in $\SSS_n$ that swaps $i$ and $i+1$, and $\overline s_0$ is the transposition that swaps $n$ and $1$. The affine Weyl group $\widetilde W$ is the affine symmetric group $\affS_n$ (see \cite[Chapter~8]{BjornerBrenti}). We can identify the index set $\widetilde I$ with $\ZZ/n\ZZ$ so that $s_is_{i+1}s_i=s_{i+1}s_is_{i+1}$ for all $i\in\ZZ/n\ZZ$. We often represent a permutation $w\in\SSS_n$ via its \dfn{one-line notation}, which is the word $w(1)\cdots w(n)$. 

\section{Reduced Random Combinatorial Billiard Trajectories}\label{sec:LamProof}  
Let $\widetilde W$ be an affine Weyl group, and let $W$ be the associated finite Weyl group. Fix a point $z_0$ in the interior of the fundamental alcove $\BB$. Let $\dd\in\Upsilon_{z_0}$, and let $N=N_\dd$ be the period of the word $\mathsf{w}(\dd)=\cdots s_{i_1}s_{i_0}$. We now prove \cref{thm:LamBilliards,thm:LamBilliards2}, which describe the asymptotic behavior of $\widetilde{\bf M}_{\dd}$ in terms of the stationary distributions $\zeta_{\dd}$ and $\zeta_{-\dd}$.

\subsection{Proof of \cref{thm:LamBilliards}} 
 Our argument in this section is quite similar to Lam's proof of the first statement in \cref{thm:Lam}, so we omit some details. We start with the following lemma, which is a simple consequence of the definition of the right action of $W$ on $V^*$. 

\begin{lemma}\label{lem:AG_right_action} 
Let $(u_M,M)$ and $(v_M,M)$ denote the states of $\widetilde{\bf M}_\dd$ and $\widehat{\bf M}_\dd$, respectively, at time $M$. Then $v_M$ has the same distribution as the unique affine Grassmannian element of $u_MW$. 
\end{lemma}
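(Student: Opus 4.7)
The plan is to construct an explicit coupling of $\widetilde{\bf M}_\dd$ and $\widehat{\bf M}_\dd$ under which $v_M$ equals the unique affine Grassmannian element of $u_MW$ almost surely; this immediately gives the distributional equality. For $u\in\widetilde W$, let $\sigma(u)$ denote the unique element of $W$ with $\BB u\subseteq\CC\sigma(u)$, so that $\pi(u):=u\sigma(u)^{-1}$ is the unique element of $uW$ whose alcove lies in $\CC$, i.e., the unique affine Grassmannian element of $uW$. Setting $v_M:=\pi(u_M)$ and using a single Bernoulli$(p)$ coin $c_M$ to drive both chains at step $M$, I would prove by induction on $M$ that the resulting process on $\widehat W\times\ZZ/N_\dd\ZZ$ has the transition law of $\widehat{\bf M}_\dd$.

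The key geometric input is that the right $W$-action on $V^*$ is by isometries, so the wall $\HH^{(u,s)}=\HH_s u$ of $\BB u$ folds under right multiplication by $\sigma(u)^{-1}$ to the wall $\HH_s\pi(u)=\HH^{(\pi(u),s)}$ of $\BB\pi(u)$, carrying the \emph{same} simple-reflection label $s\in\widetilde S$. Consequently, the hyperplane considered by the coupled AGRRBT at time $M$ is $\HH^{(v_M,s_{i_M})}$, indexed by the same $s_{i_M}$ as for the RRBT, which makes the shared clock $M\in\ZZ/N_\dd\ZZ$ meaningful for both chains. I would then split the induction step into two cases based on whether $\HH^{(v_M,s_{i_M})}$ is a wall of $\CC$.

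If $\HH^{(v_M,s_{i_M})}$ is a wall of $\CC$, there is a unique $s_i\in S$ with $\HH_{s_{i_M}}v_M=\HH_{s_i}$, and conjugating yields $s_{i_M}=v_M s_i v_M^{-1}$. A direct check gives $\sigma(s_{i_M}u_M)=s_i\sigma(u_M)$, whence $\pi(s_{i_M}u_M)=s_{i_M}v_M s_i=v_M$; hence $\pi(u_{M+1})=v_M$ whether the RRBT crosses or reflects, matching the AGRRBT's always-reflect transition $v_{M+1}=v_M$ at walls of $\CC$. Otherwise $s_{i_M}v_M$ remains in $\widehat W$, the factorization $u_M=v_M\sigma(u_M)$ is length-additive, and $\sigma(s_{i_M}u_M)=\sigma(u_M)$; thus $\pi(s_{i_M}u_M)=s_{i_M}v_M$ and $\ell(s_{i_M}u_M)-\ell(u_M)=\ell(s_{i_M}v_M)-\ell(v_M)$. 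So if the RRBT crosses, $\pi(u_{M+1})=s_{i_M}v_M$, matching the AGRRBT's crossing transition; if it reflects, $\pi(u_{M+1})=v_M$.

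The main obstacle is to verify that the ``already crossed'' bookkeeping stays in sync in this second case. Because both chains always reflect off any previously-crossed hyperplane, each hyperplane is crossed at most once, so a standard argument identifies the set of hyperplanes crossed by $\widetilde{\bf M}_\dd$ up to time $M$ with $\mathrm{Inv}(u_M)$, and likewise the set crossed by $\widehat{\bf M}_\dd$ with $\mathrm{Inv}(v_M)$. The question thereby reduces to whether $s_{i_M}$ is a left descent of $u_M$ if and only if it is a left descent of $v_M$, which is exactly the content of the length identity above. Using the shared coin $c_M$ to dictate both chains' actions at step $M$ then closes the induction and proves the lemma.
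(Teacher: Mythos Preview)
Your coupling argument is correct and is precisely a detailed fleshing out of the paper's one-line justification (``a simple consequence of the definition of the right action of $W$ on $V^*$''): the crucial observation you isolate---that right multiplication by $\sigma(u)^{-1}\in W$ carries the facet of $\BB u$ labeled $s$ to the facet of $\BB\pi(u)$ labeled by the \emph{same} $s$---is exactly the consequence of the right action that the paper has in mind. Your case analysis and the length-additivity of the factorization $u_M=v_M\sigma(u_M)$ (standard for minimal coset representatives of $\widetilde W/W$) correctly synchronize the ``already crossed'' conditions, so the induction closes; the only cosmetic point is that your notation $\HH_s u$ is not the paper's---write $\HH^{(u,s)}$ instead.
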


The proof of the second statement in \cref{thm:LamBilliards} follows from \eqref{eq:AGpsi} and \cref{lem:AG_right_action}. Thus, we just need to prove \eqref{eq:AGpsi}. 

Denote by ${(v_M,M)\in\widehat W\times\ZZ/N\ZZ}$ the state of the affine Grassmannian reduced random combinatorial billiard trajectory $\widehat{\bf M}_\dd$ at time $M$. We can write $v_M$ uniquely as $\overline v_M\tau_{\dd_{v_M}}$ with $\overline v_M\in W$ and $\dd_{v_M}\in Q^\vee$. The probability that the sequence $(v_M)_{M\geq 0}$ is eventually constant is $0$, so we will tacitly assume in what follows that it is not eventually constant. This implies that the distance between the vectors $\langle\BB v_M\rangle$ and $\langle-\dd_{v_M}\rangle$ tends to $0$ as $M\to\infty$. Thus, we wish to compute $\lim_{M\to\infty}\langle-\dd_{v_M}\rangle$. 

Every transition of positive probability in ${\bf M}_\dd$ is of the form $(w,k)\to(w,k+1)$ or of the form $(w,k)\to(\overline{s}_{i_k}w,k+1)$ for $w\in W$ and $k\in\ZZ/N\ZZ$. Given an edge $e$ of the form $(w,k)\to(\overline{s}_{i_k}w,k+1)$ and a positive integer $D$, define $\kappa_D(e)$ to be the number of nonnegative integers $M\leq D-1$ such that $M\equiv k\pmod{N}$, $\overline v_M=w$, and $\overline v_{M+1}=\overline{s}_{i_k}w$. Since the transition probability of the edge $e$ is $p$, it follows from the ergodic theorem for Markov chains \cite[Corollary~4.1]{Bremaud} that \begin{equation}\label{eq:lime}
\lim_{D\to\infty}\frac{1}{D}\kappa_D(e)=\zeta_\dd(w,k)p.
\end{equation} 

For each nonnegative integer $M$, we have $\dd_{v_M}\neq\dd_{v_{M+1}}$ if and only if $v_{M+1}=s_0 v_M$; when this is the case, we can use the identity $s_0=\overline{s}_0\tau_{-\theta^\vee}$ (see \eqref{eq:s0theta}) to find that $\dd_{v_{M+1}}=\dd_{v_M}-\theta^\vee\overline{v}_M$. Note that if $v_{M+1}=s_0v_M$, then (because the trajectory can never cross a hyperplane twice and must stay in $\CC$) we must have $\overline v_M^{-1}\theta\in\Phi^+$. Together with \eqref{eq:lime}, this implies that 
\[\lim_{M\to\infty}\langle-\dd_{v_M}\rangle=\langle\psi_\dd\rangle,\]
where \[\psi_\dd=\sum_{\substack{(w,k)\in W\times\ZZ/N\ZZ \\ i_k=0 \\ w^{-1}\theta\in\Phi^+}}\zeta_\dd(w,k)\theta^\vee w.\] This proves \eqref{eq:AGpsi}. 

\subsection{Proof of \cref{thm:LamBilliards2}} 

Because $\rr_\dd$ is the unique ray that starts at the point $z_0$ and passes through the points ${z_0+\dd,z_0+2\dd,z_0+3\dd,\ldots}$, it follows that $\rr_{-\dd}$ is the unique ray that starts at $z_0$ and passes through $z_0-\dd,z_0-2\dd,z_0-3\dd,\ldots$. This immediately implies the following lemma. 

\begin{lemma}\label{lem:makes_sense} 
We have $-\dd\in\Upsilon_{z_0}$, and $\mathsf{w}(-\dd)=\cdots\mathsf{yyy}$, where $\mathsf{y}=s_{i_0}s_{i_{1}}\cdots s_{i_{N-1}}$. 
\end{lemma}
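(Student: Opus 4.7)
The plan is to exploit the $Q^\vee$-periodicity of $\mathcal H_{\widetilde W}$ together with the fact that $\rr_\dd$ and $\rr_{-\dd}$ together trace out the full line $\ell = \{z_0 + t\dd : t \in \mathbb R\}$. Since $\tau_\dd \in \widetilde W$ preserves $\mathcal H_{\widetilde W}$, the first unit segment of $\rr_{-\dd}$ (namely $\{z_0 - t\dd : 0 \leq t \leq 1\}$) is the $-\dd$-translate of the first unit segment of $\rr_\dd$ traversed in reverse; hence $\rr_{-\dd}$, like $\rr_\dd$, avoids double intersections of hyperplanes. Iterating (using periodicity for each subsequent unit segment), all of $\rr_{-\dd}$ avoids double intersections, so $-\dd \in \Upsilon_{z_0}$.

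For the word, denote the alcoves of $\rr_\dd$ by $\BB u_0 = \BB, \BB u_1, \ldots$ with $u_{j+1} = s_{i_j} u_j$, and the alcoves of $\rr_{-\dd}$ by $\BB w_0 = \BB, \BB w_1, \ldots$ with $w_{j+1} = s_{k_j} w_j$, so that $\mathsf{w}(-\dd) = \cdots s_{k_1} s_{k_0}$. After one period, $\rr_\dd$ has moved from $z_0$ to $z_0 + \dd$, so $\BB u_N = \BB + \dd = \BB \tau_{-\dd}$, giving $u_N = \tau_{-\dd}$. I claim that $w_j = u_{N-j}\tau_\dd$ for $0 \leq j \leq N$. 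Geometrically this asserts that the $j$-th alcove of $\rr_{-\dd}$ is the $-\dd$-translate of the $(N-j)$-th alcove of $\rr_\dd$, which follows from the $\tau_\dd$-periodicity of the line and the arrangement. The base case $j = 0$ is $u_N\tau_\dd = \tau_{-\dd}\tau_\dd = \idd = w_0$. For the inductive step, the relation $u_{N-j} = s_{i_{N-1-j}} u_{N-j-1}$ gives $w_j = s_{i_{N-1-j}} \cdot u_{N-j-1}\tau_\dd$, which forces $k_j = i_{N-1-j}$ and $w_{j+1} = u_{N-j-1}\tau_\dd$.

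Hence the first $N$ letters of $\mathsf{w}(-\dd)$ spell out $s_{k_{N-1}} \cdots s_{k_0} = s_{i_0} s_{i_1} \cdots s_{i_{N-1}} = \mathsf{y}$. Since $w_N = u_0 \tau_\dd = \tau_\dd$ corresponds to the translation $\BB \mapsto \BB - \dd$, which preserves $\mathcal H_{\widetilde W}$, the same argument applied starting from the alcove $\BB w_N$ shows that each subsequent period of $\rr_{-\dd}$ contributes the block $\mathsf{y}$ again, yielding $\mathsf{w}(-\dd) = \cdots \mathsf{y}\mathsf{y}\mathsf{y}$. I anticipate no serious obstacle: the only subtlety is keeping straight that $\tau_\dd$ acts on $V^*$ as translation by $-\dd$ (so that $\BB\tau_{-\dd} = \BB + \dd$), and that the right action $w \mapsto s_{k_j} w$ on $\widetilde W$ corresponds, at the alcove level, to crossing the facet of $\BB w$ labeled $s_{k_j}$.
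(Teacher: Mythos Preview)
Your proposal is correct and follows the same translation-periodicity idea as the paper, which simply notes that $\rr_{-\dd}$ passes through $z_0, z_0-\dd, z_0-2\dd,\ldots$ and declares the lemma immediate; you have fleshed out the alcove-by-alcove details that the paper omits. One nitpick: the assertion $u_N=\tau_{-\dd}$ requires $\dd$ to be primitive in $Q^\vee$ (otherwise the minimal period $N$ corresponds to translation by the primitive vector $\dd'$ parallel to $\dd$, so $u_N=\tau_{-\dd'}$), but your argument goes through verbatim with $\dd'$ in place of $\dd$.
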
 

Choose $\epsilon\in\{\pm 1\}$ uniformly at random, and consider the reduced random combinatorial billiard trajectory $\widetilde{\bf M}_{\epsilon\dd}$. Let $(u_M,M)$ be the state of this Markov chain at time $M$. The fact that the billiard trajectory cannot re-enter a chamber that it has already left implies that there exists $w_*\in W$ such that $u_M\in\CC w_*$ for all sufficiently large $M$. 

Let $\{\TT_i:i\in\widetilde I\}$ be the set of generators of the $0$-Hecke algebra of $\widetilde W$, as defined in \cref{subsec:general}. Recall that the elements $\TT_w$ for $w\in\widetilde W$ form a linear basis of the $0$-Hecke algebra. It follows from \cref{lem:makes_sense} that for each positive integer $k$ and each $x\in \widetilde W$, the probability $\mathbb P(u_{kN}=x)$ is equal to the coefficient of $\TT_x$ in 
\begin{equation}\label{eq:T_x} 
\frac{1}{2}\left(\left((1-p+p\TT_{i_{N-1}})\cdots(1-p+p\TT_{i_0})\right)^k+\left((1-p+p\TT_{i_0})\cdots(1-p+p\TT_{i_{N-1}})\right)^k\right). 
\end{equation} 
There is an antiautomorphism of the $0$-Hecke algebra defined by $\TT_w\mapsto\TT_{w^{-1}}$. This antiautomorphism fixes the element in \eqref{eq:T_x}, so for each positive integer $k$ and each $x\in \widetilde W$, we have 
\begin{equation}\label{eq:xxinverse}
\mathbb P(u_{kN}=x)=\mathbb P(u_{kN}=x^{-1}). 
\end{equation} 

For $v,w\in W$, let $\CC_{w}^v=\{u\in \CC w:\overline u=v\}$. An element $x=\overline x\tau_\xi\in\widetilde W$ is \dfn{regular} if there does not exist a nonidentity element $w\in W$ such that $\xi w=\xi$. The following lemma is due to Lam. 

\begin{lemma}[{Lam~\cite[Lemma~7]{Lam}}]\label{lem:regular}  
Let $v,w\in W$. If $x\in \CC_{w}^v$ is regular, then $x^{-1}\in\CC_{\wo w v^{-1}}^{v^{-1}}$.  
\end{lemma}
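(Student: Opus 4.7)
The plan is to decompose $x$ using the semidirect product $\widetilde W \cong W \ltimes Q^\vee$ as $x = v\tau_\xi$ with $v = \overline x$ and $\xi = \dd_x \in Q^\vee$, compute $x^{-1}$ explicitly, and then verify the two conditions defining $\CC^{v^{-1}}_{\wo w v^{-1}}$ separately. The action identity $\gamma\tau_\dd = \gamma - \dd$ combined with linearity of the right $W$-action yields the commutation $\tau_\dd u = u\tau_{\dd u}$, which gives
\[x^{-1} \;=\; \tau_{-\xi}\, v^{-1} \;=\; v^{-1}\tau_{-\xi v^{-1}}.\]
This immediately shows $\overline{x^{-1}} = v^{-1}$ (the first required condition). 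Moreover, $\BB x^{-1} = v^{-1}(\BB) + \xi v^{-1}$, so right-multiplying by $v$ converts the desired inclusion $\BB x^{-1} \subseteq \CC\wo w v^{-1}$ into the equivalent form $\BB + \xi \subseteq \CC\wo w$.

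The geometric core of the argument is the observation that $0 \in \BB$ (since $(0,\alpha_i) = 0$ and $(0,\theta) = 0$) and that every $u \in W$ fixes the origin. Thus for any $\xi \in Q^\vee$, all of the alcoves $\BB u\tau_\xi = u(\BB) - \xi$ (as $u$ ranges over $W$) contain the common point $-\xi$. Regularity of $x$ is exactly the statement that the $W$-stabilizer of $\xi$ is trivial, which is equivalent to $-\xi$ lying in the interior of a unique chamber $\CC w_0$. Because every alcove wall lies in some hyperplane of $\mathcal H_{\widetilde W} \supseteq \mathcal H_W$, no alcove can straddle distinct chambers, so any alcove containing a point in the interior of $\CC w_0$ is contained in $\CC w_0$. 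The hypothesis $\BB v\tau_\xi \subseteq \CC w$ therefore forces $w = w_0$, and applying the same reasoning with $u = \wo$ in place of $v$ yields $\BB \wo\tau_\xi \subseteq \CC w$.

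Finally, I will translate $\BB \wo\tau_\xi \subseteq \CC w$ into the required $\BB + \xi \subseteq \CC\wo w$ via the identities $-\BB = \BB\wo$ (which follows from $\wo(\theta) = -\theta$ and $\wo$ permuting the simple roots up to sign) and $\CC\wo w = -\CC w$ (from $\wo\CC = -\CC$ together with linearity of the right action). These rewrite $\BB\wo\tau_\xi \subseteq \CC w$ as $-\BB - \xi \subseteq \CC w$, equivalently $\BB + \xi \subseteq -\CC w = \CC\wo w$, completing the proof together with $\overline{x^{-1}} = v^{-1}$. The main obstacle I anticipate is pinpointing precisely where the regularity hypothesis is used: without it, $-\xi$ could lie on a hyperplane in $\mathcal H_W$, in which case the alcoves $\BB u\tau_\xi$ sharing the vertex $-\xi$ could distribute across several chambers, and the uniqueness of $w_0$ (hence the independence of $w$ from $v$) would fail. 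The rest amounts to careful bookkeeping with the semidirect product, negation on $V^*$, and the involution $\wo$.
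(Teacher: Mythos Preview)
Your proof is correct. The paper does not actually prove this lemma; it simply cites it as \cite[Lemma~7]{Lam} and uses it as a black box. Your argument fills this gap with a clean direct computation: the semidirect-product bookkeeping gives $\overline{x^{-1}}=v^{-1}$, and the geometric observation that all $|W|$ alcoves $\BB u\tau_\xi$ share the vertex $-\xi$ (which regularity places in the interior of a single chamber) correctly pins down that chamber as $\CC w$, after which the identities $\BB\wo=-\BB$ and $\CC\wo w=-\CC w$ finish the job. The one place where the wording could be tightened is the sentence ``any alcove containing a point in the interior of $\CC w_0$ is contained in $\CC w_0$'': since $-\xi$ is a lattice point it lies on the boundary of each $\BB u\tau_\xi$, so the clean justification is that the \emph{interior} of a closed alcove lies in a single open chamber, whence the closed alcove lies in the corresponding closed chamber, and two distinct closed chambers meet only along their boundaries.
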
 

For $v,w\in W$, it follows from \cref{thm:LamBilliards,lem:AG_right_action} that 
\begin{equation}\label{eq:eta}
\lim_{k\to\infty}\mathbb P(u_{kN}\in\CC_{w}^v)=\frac{1}{2}(\zeta_{\dd}(vw^{-1},0)+\zeta_{-\dd}(vw^{-1},0))\mathbb P(w_*=w). 
\end{equation} 

We know by \cref{thm:Lam} that (with probability $1$) the limit  $\lim_{k\to\infty}\langle\BB u_{kN}\rangle$ exists and belongs to $\langle\psi_\dd\rangle W\cup\langle\psi_{-\dd}\rangle W$; this implies that $u_{kN}$ is almost surely regular for sufficiently large $k$. 
Therefore, for $v,w\in W$, we can combine \cref{lem:regular} with \eqref{eq:xxinverse} and \eqref{eq:eta} to find that 
\begin{align*} 
\frac{1}{2}(\zeta_{\dd}(vw^{-1},0)+\zeta_{-\dd}(vw^{-1},0))\mathbb P(w_*=w)&=\lim_{k\to\infty}\mathbb P(u_{kN}\in\CC_w^v) \\ 
&= \lim_{k\to\infty}\mathbb P(u_{kN}\in\CC_{\wo wv^{-1}}^{v^{-1}}) \\ 
&= \frac{1}{2}(\zeta_{\dd}(w^{-1}\wo,0)+\zeta_{-\dd}(w^{-1}\wo,0))\mathbb P(w_*=\wo wv^{-1}).
\end{align*} 
By setting $v=\wo w$, we find that 
\[\mathbb P(w_*=w)=\frac{\mathbb P(w_*=\idd)}{\zeta_{\dd}(\wo,0)+\zeta_{-\dd}(\wo,0)}(\zeta_{\dd}(w^{-1}\wo,0)+\zeta_{-\dd}(w^{-1}\wo,0)).\] Since $\sum_{w\in W}\mathbb P(w_*=w)=1$ and $\sum_{w\in W}(\zeta_{\dd}(w^{-1}\wo,0)+\zeta_{-\dd}(w^{-1}\wo,0))=\frac{2}{N}$, we have \[\frac{\mathbb P(w_*=\idd)}{\zeta_{\dd}(\wo,0)+\zeta_{-\dd}(\wo,0)}=\frac{N}{2}.\] We deduce that  
\[\mathbb P(w_*=w)=\frac{N}{2}(\zeta_{\dd}(w^{-1}\wo,0)+\zeta_{-\dd}(w^{-1}\wo,0)),\] which completes the proof of \cref{thm:LamBilliards2}.

\section{The Stoned Multispecies ASEP}\label{sec:ring}

\subsection{Set-up}\label{subsec:setup_ring} 
Throughout this section, we take $\Phi$ to be the root system of type~$A_{n-1}$ so that $W$ is the symmetric group $\SSS_n$ and $\widetilde W$ is the affine symmetric group $\affS_n$. As mentioned in \cref{subsec:typeA}, we identify the index set $\widetilde I$ with $\ZZ/n\ZZ$. For $1\leq i\leq n-1$, the simple reflection $s_i=\overline s_i$ is the transposition that swaps $i$ and $i+1$; the element $\overline s_0$ is the transposition that swaps $n$ and $1$. Given a tuple $\boldsymbol{y}=(y_1,\ldots,y_n)$ and a permutation $w\in\SSS_n$, let $w{\boldsymbol{y}}=(y_{w^{-1}(1)},\ldots,y_{w^{-1}}(n))$. Given a polynomial ${f=f({\bf x})=f(x_1,\ldots,x_n)}$, we define a polynomial $wf$ by letting \[wf({\bf x})=f(w{\bf x})=f(x_{w^{-1}(1)},\ldots,x_{w^{-1}(n)}).\] Let $c\in\SSS_n$ be the permutation whose cycle decomposition is $(1\,\,2\,\cdots\,n)$.

For $i\in\ZZ/n\ZZ$, let $T_i$ denote the operator on $\mathbb C(t)[{\bf x}]$ defined by \[T_i=t-\frac{tx_i-x_{i+1}}{x_i-x_{i+1}}(1-\overline s_i).\] 
One can check that the following relations hold: 
\begin{alignat*}{2}
(T_i-t)(T_i+1)&=0 \quad &&\text{for all }i\in\ZZ/n\ZZ; \\
T_iT_{i+1}T_i&=T_{i+1}T_iT_{i+1} \quad &&\text{for all }i\in\ZZ/n\ZZ; \\ 
T_iT_j&=T_jT_i \quad &&\text{for all }i,j\in\ZZ/n\ZZ\text{ with }j\not\in\{i-1,i,i+1\}. 
\end{alignat*}
(This means that these operators define an action of the \emph{Hecke algebra} of $\affS_n$.) 

There is a unique family $(F_\mu)_{\mu\in S_\lambda}$ of  homogeneous polynomials in $\mathbb C(t)[{\bf x}]$ such that the coefficient of $x_1^{\lambda_n-\lambda_1}x_2^{\lambda_n-\lambda_2}\cdots x_n^{\lambda_n-\lambda_n}$ in $F_\lambda$ is $1$ and such that the following \dfn{exchange equations} hold: 
\begin{alignat}{2}
\label{eq:qKZ1}T_iF_\mu&=F_{\overline{s}_i\mu}\quad&&\text{if }i\in\ZZ/n\ZZ\text{ and }\mu_i<\mu_{i+1}; \\
\label{eq:qKZ2}\overline s_iF_\mu&=F_\mu\quad&&\text{if }i\in\ZZ/n\ZZ\text{ and }\mu_i=\mu_{i+1}; \\ 
\label{eq:qKZ3}cF_{c\mu}&=F_\mu. &&
\end{alignat}
(These equations are obtained by setting $q=1$ in the \emph{$q$KZ equations} from \cite{CMW,KT}.)
These polynomials $F_\mu=F_\mu({\bf x};t)$ are called \dfn{ASEP polynomials}. Cantini, de Gier, and Wheeler \cite{CdGW} showed that the polynomial \[P_\lambda({\bf x})=\sum_{\mu\in S_\lambda}F_\mu({\bf x})\] is a \dfn{Macdonald polynomial}. Corteel, Mandelshtam, and Williams \cite{CMW} reproved this result and gave a combinatorial formula for computing ASEP polynomials using \emph{multiline queues}. While we do not require multiline queues in this section, we will need them in \cref{sec:correlations} (but only in the simplified setting where $t=0$). 

Recall the definition of $\hh_t(k,k')$ from \eqref{eq:h}. 
We let $\AmASEP_\lambda$ denote the \dfn{multispecies ASEP} with state space $S_\lambda$. This is a discrete-time Markov chain in which the transition probability from a state $\mu$ to a state $\mu'$ is given by 
\[\mathbb P(\mu\to\mu')=\begin{cases} \frac{1}{n}\hh_t(\mu_i,\mu_{i+1}) & \mbox{if }\mu'=\overline s_i\mu\neq\mu; \\  1-\sum_{\nu\in S_\lambda\setminus\{\mu\}}\mathbb P(\mu\to\nu) & \mbox{if }\mu=\mu'; \\ 0 & \mbox{otherwise}.  \end{cases}
\]
The state $\mu$ can be visualized as a configuration of particles on a ring with sites $1,\ldots,n$ (listed in clockwise cyclic order), where the particle on site $i$ has \dfn{species} $\mu_i$ (see \cref{fig:mASEP}). There has been substantial attention devoted to the stationary distribution of the multispecies ASEP \cite{AAMP, Derrida, EFM, FFK, FerrariMartin, CdGW, Prolhac, CMW, Martin}. According to \cite{CdGW} (see also \cite{Martin,CMW}), the stationary probability of $\mu$ in $\AmASEP_\lambda$ is \begin{equation}\label{eq:stationaryNew}
\frac{F_\mu(1,\ldots,1;t)}{P_\lambda(1,\ldots,1;t)}.
\end{equation} 
When $t=0$, the multispecies ASEP is called the \dfn{multispecies TASEP}; Ferrari and Martin \cite{FerrariMartin} computed its stationary distribution using multiline queues several years before the more general formula in \eqref{eq:stationaryNew} was discovered.  

\begin{figure}[ht]
\begin{center}{\includegraphics[height=6.761cm]{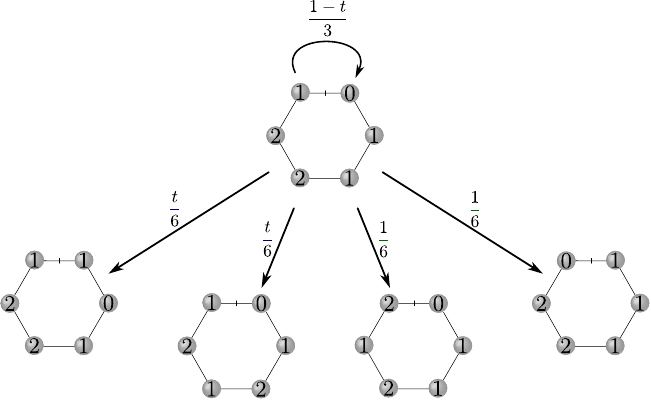}}
\end{center}
\caption{All the transitions from the state $(0,1,1,2,2,1)$ in $\AmASEP_{(0,1,1,1,2,2)}$. In each depiction of the ring, a tick-mark is drawn between sites $6$ and $1$.} 
\label{fig:mASEP}
\end{figure} 

To define the stoned variant of $\AmASEP_\lambda$, we first need to introduce a modified version of the multispecies TASEP. Consider $n$ \dfn{stones} $\s_1,\ldots,\s_n$. Fix an integer $m\geq 2$, and let \[\rrho\colon\{\s_1,\ldots,\s_n\}\to[m]\] be a surjective function such that $1=\rrho(\s_1)\leq\cdots\leq\rrho(\s_n)=m$; we call $\rrho(\s_j)$ the \dfn{density} of $\s_j$. Let $\Omm=\Omm_{\rrho}$ denote the set of permutations $\sigma\in\SSS_n$ such that for every $k\in[m]$, the stones of density $k$ appear in the same cyclic order within the list $\s_{\sigma^{-1}(1)},\ldots,\s_{\sigma^{-1}(n)}$ as they do within the list $\s_1,\ldots,\s_n$.
We can view a permutation $\sigma\in\Omm$ as a certain configuration of the stones on the sites of the ring, where the stone $\s_{j}$ is placed on the site $\sigma(j)$. For $\sigma\in\Omm$, let $\MM(\sigma)$ be the number of indices $i\in\ZZ/n\ZZ$ such that $\rrho(\s_{\sigma^{-1}(i)})<\rrho(\s_{\sigma^{-1}(i+1)})$. The set $\Omm$ is the state space of an irreducible Markov chain called the \dfn{auxiliary TASEP}, whose transition probabilities are given by 
\begin{equation}\label{eq:aux_transitions}
\mathbb P(\sigma\to \sigma')=\begin{cases} \frac{1}{n} & \mbox{if }\sigma'=\overline{s}_i\sigma\text{ and }\rrho(\s_{\sigma^{-1}(i)})<\rrho(\s_{\sigma^{-1}(i+1)}); \\   1-\frac{\MM(\sigma)}{n} & \mbox{if }\sigma=\sigma'; \\ 0 & \mbox{otherwise}.  \end{cases}
\end{equation} 
See \cref{fig:stonesTASEP1}. 

\begin{figure}[ht]
\begin{center}{\includegraphics[height=7.352cm]{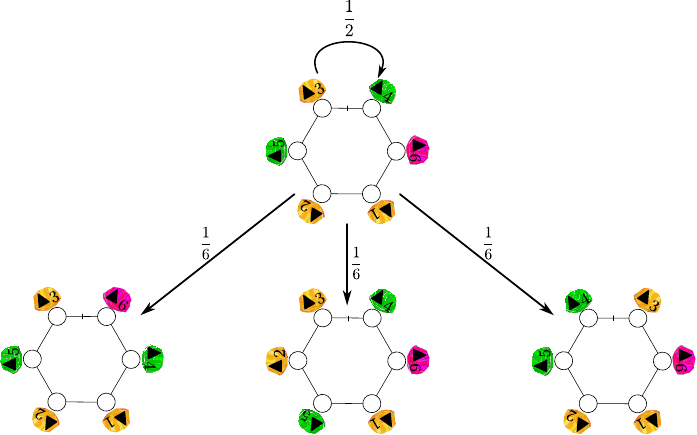}}
\end{center}
\caption{All the transitions from the state $\sigma=346152$ (in one-line notation) of the auxiliary TASEP with $n=6$. The densities of stones are represented by colors; we have $\rrho(\s_1)=\rrho(\s_2)=\rrho(\s_3)=1$, $\rrho(\s_4)=\rrho(\s_5)=2$, and $\rrho(\s_6)=3$.}
\label{fig:stonesTASEP1}
\end{figure}

The auxiliary TASEP is essentially the same as the usual multispecies TASEP; the only difference (besides the fact that particles are called \emph{stones} and species are called \emph{densities}) is that the stones of a given density are given different names. However, the definition of $\Omm$ ensures that this minor difference does not affect the dynamics of the Markov chain. In particular, the auxiliary TASEP has a stationary measure $\omega$ (not necessarily a probability measure) given by 
\begin{equation}\label{eq:omega_circ}\omega(\sigma)=F_{(\rrho(\s_{\sigma^{-1}(1)}),\ldots,\rrho(\s_{\sigma^{-1}(n)}))}(1,\ldots,1;0). 
\end{equation}

Let $\ttt=(\chi_1,\ldots,\chi_n)$ be an $n$-tuple of nonzero real numbers such that for all $j,j'\in[n]$ satisfying $\rrho(\s_{j})<\rrho(\s_{j'})$, the number 
\begin{equation}\label{eq:p(jj')}
p(j,j')\coloneq\frac{\chi_j-\chi_{j'}}{t\chi_j-\chi_{j'}}
\end{equation} belongs to $[0,1)$. Let us also assume that there exist $j,j'\in[n]$ such that $p(j,j')>0$. Recall that for $w\in\mathfrak S_n$, we write $w\ttt=(\chi_{w^{-1}(1)},\ldots,\chi_{w^{-1}(n)})$. We envision an element of $S_\lambda\times\Omm$ as a configuration of particles and stones on the ring with sites $1,\ldots,n$. 

\begin{definition}\label{def:stonedmASEP}
The \dfn{stoned multispecies ASEP}, which we denote by $\blacktriangle\AmASEP_\lambda$, is the discrete-time Markov chain with state space $S_\lambda\times\Omm$ whose transition probabilities are as follows: 
\begin{itemize}
\item If $\rrho(\s_{\sigma^{-1}(i)})<\rrho(\s_{\sigma^{-1}(i+1)})$ and $\mu_i\neq\mu_{i+1}$, then \[\mathbb P((\mu,\sigma)\to (\overline{s}_i\mu,\overline{s}_i\sigma))=\frac{1}{n}p(\sigma^{-1}(i),\sigma^{-1}(i+1))\hh_t(\mu_{i},\mu_{i+1})\] and \[\mathbb P((\mu,\sigma)\to (\mu,\overline{s}_i\sigma))=\frac{1}{n}[1-p(\sigma^{-1}(i),\sigma^{-1}(i+1))\hh_t(\mu_{i},\mu_{i+1})].\]
\item If $\rrho(\s_{\sigma^{-1}(i)})<\rrho(\s_{\sigma^{-1}(i+1)})$ and $\mu_i=\mu_{i+1}$, then \[\mathbb P((\mu,\sigma)\to (\mu,\overline s_i\sigma))=\frac{1}{n}.\]  
\item We have $\mathbb P((\mu,\sigma)\to(\mu,\sigma))=1-\frac{\MM(\sigma)}{n}$. 
\item All other transition probabilities are $0$. 
\end{itemize}
\end{definition}

\begin{figure}[ht]
\begin{center}{\includegraphics[height=7.359cm]{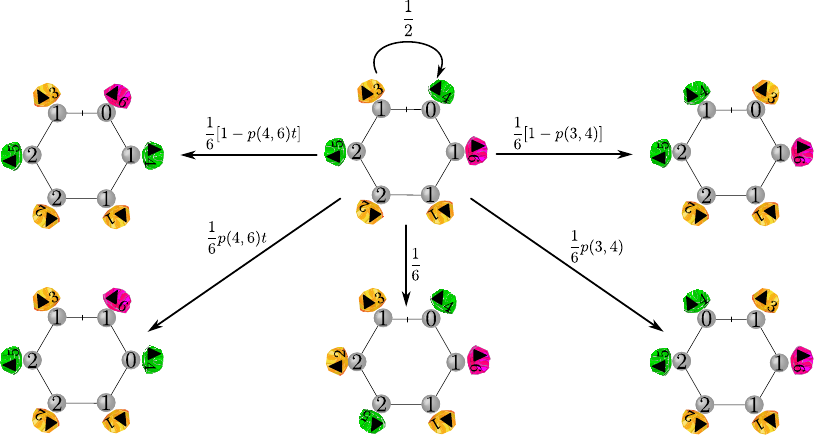}}
\end{center}
\caption{All transitions from the state $((0,1,1,2,2,1),346152)$ of the Markov chain $\blacktriangle\AmASEP_{(0,1,1,1,2,2)}$ with $\rrho(\s_1)=\rrho(\s_2)=\rrho(\s_3)=1$, 
$\rrho(\s_4)=\rrho(\s_5)=2$, and $\rrho(\s_6)=3$. } 
\label{fig:stonedmASEP}
\end{figure}

\cref{fig:stonedmASEP} illustrates \cref{def:stonedmASEP}. 

A more intuitive description of the dynamics of $\blacktriangle\AmASEP_\lambda$ is as follows. The stones move according to the auxiliary TASEP (so the dynamics of the stones do not depend on the particles). If $\blacktriangle\AmASEP_\lambda$ is in a state $(\mu,\sigma)$ and two stones $\s_j$ and $\s_{j'}$ with $j<j'$ swap places (which necessarily implies that $\sigma(j')\equiv\sigma(j)+1\pmod{n}$), then these stones send a signal to the particles on sites $\sigma(j)$ and $\sigma(j')$ telling them to swap places. However, the signal only has probability $p(j,j')$ of actually reaching the particles. If the signal does not reach the particles, then the particles simply do not move. On the other hand, if the particles do receive the signal, then with probability $\hh_t(\mu_{\sigma(j)},\mu_{\sigma(j')})$, they decide to actually follow their orders and swap places (and with probability $1-\hh_t(\mu_{\sigma(j)},\mu_{\sigma(j')})$, they disregard the signal and do not move).  

\subsection{Stationary Distribution} 

The assumption that the probabilities $p(j,j')$ in \eqref{eq:p(jj')} are strictly less than $1$ and are not all $0$ ensures that $\blacktriangle\AmASEP_\lambda$ is irreducible and, hence, has a unique stationary distribution. Our main theorem in this section determines this stationary distribution explicitly in terms of ASEP polynomials. 

\begin{theorem}\label{thm:main_ASEP}
The stationary probability measure $\pi$ of $\blacktriangle\AmASEP_\lambda$ is given by \[\pi(\mu,\sigma)=\frac{1}{Z(\lambda,\rrho)}F_{\mu}(\sigma\ttt;t)\cdot F_{(\rrho(\s_{\sigma^{-1}(1)}),\ldots,\rrho(\s_{\sigma^{-1}(n)}))}(1,\ldots,1;0),\]
where $Z(\lambda,\rrho)$ is a normalization factor that only depends on $\lambda$ and $\rrho$. 
\end{theorem}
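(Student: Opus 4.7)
The plan is to verify directly that the measure
\[
\pi(\mu,\sigma)=\frac{1}{Z(\lambda,\rrho)}\,\omega(\sigma)\,F_\mu(\sigma\ttt;t), \qquad \omega(\sigma):=F_{(\rrho(\s_{\sigma^{-1}(1)}),\ldots,\rrho(\s_{\sigma^{-1}(n)}))}(1,\ldots,1;0),
\]
satisfies the global balance equations of $\blacktriangle\AmASEP_\lambda$. Since $\blacktriangle\AmASEP_\lambda$ is irreducible under the standing hypotheses on $\chi_1,\ldots,\chi_n$, uniqueness of the stationary distribution then yields the theorem. By \eqref{eq:omega_circ} the factor $\omega$ is already the stationary measure of the auxiliary TASEP, so the real content is that the particle factor $F_\mu(\sigma\ttt;t)$---which couples the particles to the stones through the substitution $\sigma\ttt$---combines with $\omega$ in exactly the right way.

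After cancelling the self-loop at $(\mu,\sigma)$ and grouping the remaining incoming flows by the swap index $i$, the balance equation reads
\[
\omega(\sigma)F_\mu(\sigma\ttt)\MM(\sigma)=\sum_{i}\omega(\overline s_i\sigma)\Bigl[p\,\hh_t(\mu_{i+1},\mu_i)\,F_{\overline s_i\mu}(\overline s_i\sigma\ttt)+\bigl(1-p\,\hh_t(\mu_i,\mu_{i+1})\bigr)F_\mu(\overline s_i\sigma\ttt)\Bigr],
\]
where the sum runs over indices $i$ with $\rrho(\s_{\sigma^{-1}(i+1)})<\rrho(\s_{\sigma^{-1}(i)})$ (the density descents of $\sigma$, at which $\overline s_i\sigma\to\sigma$ is a legal stone transition) and $p=\bigl((\sigma\ttt)_{i+1}-(\sigma\ttt)_i\bigr)\big/\bigl(t(\sigma\ttt)_{i+1}-(\sigma\ttt)_i\bigr)$. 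Because $\omega$ is stationary for the auxiliary TASEP, the identity $\omega(\sigma)\MM(\sigma)=\sum_i\omega(\overline s_i\sigma)$ already holds over the same range of $i$. Hence the full balance equation reduces to the pointwise \emph{local identity}
\[
F_\mu(\sigma\ttt)=p\,\hh_t(\mu_{i+1},\mu_i)F_{\overline s_i\mu}(\overline s_i\sigma\ttt)+\bigl(1-p\,\hh_t(\mu_i,\mu_{i+1})\bigr)F_\mu(\overline s_i\sigma\ttt),
\]
to be verified for each relevant $i$.

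I would prove this local identity case by case from the exchange equations \eqref{eq:qKZ1}--\eqref{eq:qKZ3}. When $\mu_i=\mu_{i+1}$, both $\hh_t$ coefficients vanish and the identity collapses to $F_\mu(\sigma\ttt)=F_\mu(\overline s_i\sigma\ttt)$, which is \eqref{eq:qKZ2}. When $\mu_i<\mu_{i+1}$, I would apply $\overline s_i$ to $T_iF_\mu=F_{\overline s_i\mu}$ from \eqref{eq:qKZ1}, solve the resulting linear relation for $F_\mu({\bf x})$, and identify the coefficients as $p$ and $1-tp$ via a short computation involving $A({\bf x})=(tx_i-x_{i+1})/(x_i-x_{i+1})$. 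When $\mu_i>\mu_{i+1}$, I would apply the previous case with $\mu$ replaced by $\overline s_i\mu$ and combine it with the Hecke-algebra relation $T_i^2=(t-1)T_i+t$ to produce the mirror identity with coefficients $tp$ and $1-p$. Finally, the affine index $i=0$ (for which $T_i$ requires interpretation) would be handled by conjugating with the cyclic shift $c$ and invoking \eqref{eq:qKZ3}, which transports the $i=0$ statement to one of the cases already done.

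The principal obstacle is this case analysis of the local identity: although each individual case is just Hecke algebra, one must manipulate the rational-function coefficients carefully to see that they align precisely with $p$, $1-tp$, $tp$, and $1-p$. Once the local identity is in hand, the global balance equation collapses onto the auxiliary-TASEP balance, and the proof concludes cleanly---consistent with the paper's heuristic that, in stoned exclusion processes, the stones serve to locally factor the transitions of the underlying particle system.
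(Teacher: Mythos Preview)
Your proposal is correct and follows essentially the same route as the paper. Your ``local identity'' is exactly the paper's Lemma~\ref{lem:rewriting} (specialized at ${\bf x}=\sigma\ttt$), and your reduction of the global balance equation to the auxiliary-TASEP balance via this identity is the content of Lemma~\ref{lem:main_ASEP} and the short proof that follows it. Two minor remarks: first, the affine index $i=0$ needs no special treatment via \eqref{eq:qKZ3}, since $T_0$ and the exchange relations \eqref{eq:qKZ1}--\eqref{eq:qKZ2} are already defined for all $i\in\ZZ/n\ZZ$ with the cyclic convention $x_0=x_n$, $\mu_0=\mu_n$; second, in the case $\mu_i>\mu_{i+1}$ the paper uses the symmetry $(1-\overline s_i)(F_\mu+F_{\overline s_i\mu})=0$ rather than the quadratic Hecke relation, but your route through $T_i^2=(t-1)T_i+t$ is equally valid.
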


If there are only $m=2$ different stone densities, then it is well known that the stationary distribution of the auxiliary TASEP is the uniform distribution on $\Omm$. In other words, the quantity \[\omega(\sigma)=F_{(\rrho(\s_{\sigma^{-1}(1)}),\ldots,\rrho(\s_{\sigma^{-1}(n)}))}(1,\ldots,1;0)\] is independent of $\sigma$ when $m=2$. Therefore, the following corollary is immediate from \cref{thm:main_ASEP}. 

\begin{corollary}\label{cor:main_ASEP}
If there are $m=2$ different densities of stones, then the stationary probability measure $\pi$ of $\blacktriangle\AmASEP_\lambda$ is given by \[\pi(\mu,\sigma)=\frac{1}{Z'(\lambda,\rrho)}F_{\mu}(\sigma\ttt;t),\]
where $Z'(\lambda,\rrho)$ is a normalization factor that only depends on $\lambda$ and $\rrho$.  
\end{corollary}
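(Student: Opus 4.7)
The plan is to deduce the corollary as an immediate consequence of \cref{thm:main_ASEP}, which gives
\[\pi(\mu,\sigma) = \frac{1}{Z(\lambda,\rrho)}\, F_\mu(\sigma\ttt;t)\cdot\omega(\sigma),\]
where $\omega(\sigma)$ is as in \eqref{eq:omega_circ}. The entire task therefore reduces to showing that $\omega(\sigma)$ is independent of $\sigma \in \Omm$ when $m=2$; the new normalization factor $Z'(\lambda,\rrho)$ is then $Z(\lambda,\rrho)/c$, where $c$ is the common value of $\omega$.

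My strategy for establishing constancy is to verify that the uniform measure on $\Omm$ is stationary for the auxiliary TASEP. Since the auxiliary TASEP is irreducible, its stationary measure is unique up to a positive scalar. Because $\omega$ was identified as a stationary measure in \eqref{eq:omega_circ}, it must then coincide with a positive scalar multiple of the uniform measure, forcing $\omega$ to be constant on $\Omm$.

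To verify stationarity of the uniform measure, I fix $\sigma \in \Omm$ and examine the balance equation using \eqref{eq:aux_transitions}. All transitions from $\sigma$ to other states carry probability $1/n$, and the number of such transitions equals $\MM(\sigma)$. Similarly, by inspection of \eqref{eq:aux_transitions}, the number of states $\sigma' \ne \sigma$ for which $\sigma'\to\sigma$ has probability $1/n$ equals the number of cyclic indices $i \in \ZZ/n\ZZ$ with $\rrho(\s_{\sigma^{-1}(i+1)}) < \rrho(\s_{\sigma^{-1}(i)})$. Thus the balance equation for the uniform measure collapses to the combinatorial identity: in the cyclic word $(\rrho(\s_{\sigma^{-1}(1)}),\ldots,\rrho(\s_{\sigma^{-1}(n)}))$, the number of ``low-high'' adjacencies equals the number of ``high-low'' adjacencies. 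When $m = 2$ this cyclic word uses only two letters, so runs of each density alternate around the cycle, and the two adjacency counts agree. This is the only substantive step, and it is elementary, so I do not anticipate any real obstacle.
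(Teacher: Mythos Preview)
Your proposal is correct and follows essentially the same approach as the paper: reduce to \cref{thm:main_ASEP} and observe that $\omega(\sigma)$ is constant on $\Omm$ when $m=2$. The only difference is that the paper simply cites the uniformity of the stationary distribution of the $2$-species TASEP as well known, whereas you supply a short direct verification via the balance equation and the low-high/high-low adjacency count; this extra paragraph is harmless and entirely correct.
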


When $m=2$, \cref{cor:main_ASEP} demonstrates that the stoned multispecies ASEP is a Markov chain whose stationary distribution is determined by ASEP polynomials in which $x_1,\ldots,x_n$ and $t$ are evaluated at generic values. 

\begin{example}\label{exam:1stone}
One particularly simple instance of the stoned multispecies ASEP that we wish to highlight is that in which $m=2$ and ${\rrho(\s_1)=\cdots=\rrho(\s_{n-1})=1}<2=\rrho(\s_{n})$. In this case, the $n(n-1)$ permutations in $\Omm$ correspond to the ways of arranging the $n$ stones on the ring so that the stones other than $\s_n$ appear in the cyclic clockwise order $\s_1,\ldots,\s_{n-1}$. The dynamics of the auxiliary TASEP is very simple: at each step, either none of the stones move (this happens with probability $\frac{n-1}{n}$) or the stone $\s_n$ swaps with the stone to its left (this happens with probability $\frac{1}{n}$). Let us choose probabilities $p(1,n),\ldots,p(n-1,n)\in[0,1)$ arbitrarily subject to the condition that they are not all $0$. Let $\chi_n$ be an arbitrary nonzero real number, and for $1\leq j\leq n-1$, define \[\chi_j=\chi_n\frac{1-p(j,n)}{1-p(j,n)t}.\] Then each equation of the form \eqref{eq:p(jj')} holds, so \cref{cor:main_ASEP} tells us that the stationary probability of a state $(\mu,\sigma)$ is \[\frac{1}{Z'(\lambda,\rrho)}F_\mu(\sigma\ttt;t).\] 

An even more special case that we will need later comes from fixing a probability $p\in(0,1)$, setting $t=0$, and setting $\chi_n=(1-p)^{-1}$ and $p(j,n)=p$ for all $1\leq j\leq n-1$. In this case, $\chi_1=\cdots=\chi_{n-1}=1$. If we let $\ttt^{(k)}$ denote the $n$-tuple whose $k$-th entry is $(1-p)^{-1}$ and whose other entries are all $1$, then we find that 
\begin{equation}\label{eq:pi2}
\pi(\mu,\sigma)=\frac{1}{Z'(\lambda,\rrho)}F_\mu(\ttt^{(\sigma(n))};t).
\end{equation} 
\end{example}

\begin{remark}\label{rem:close_to_0}
Suppose the probabilities $p(j,j')$ are all very close to $0$. Then $\chi_1,\ldots,\chi_n$ are very close together, so $\pi(\mu,\sigma)|\Omm|$ is very close to the stationary probability of $\mu$ in $\AmASEP_\lambda$. Intuitively, this is because the signals that the stones send to the particles seldom actually reach the particles. Hence, the sequence of edges of the ring at which signals successfully reach particles is approximately the same as a sequence of edges of the ring chosen independently and uniformly at random.  
\end{remark}

Our proof of \cref{thm:main_ASEP} is actually quite simple, but it requires a couple of lemmas. 

\begin{lemma}\label{lem:rewriting}
For $\nu\in S_\lambda$ and $i\in\ZZ/n\ZZ$, we have 
\[F_\nu({\bf x};t)=\left(1-\frac{x_{i+1}-x_i}{tx_{i+1}-x_i}\hh_t(\nu_{i},\nu_{i+1})\right)\overline{s}_iF_\nu({\bf x};t)+\frac{x_{i+1}-x_i}{tx_{i+1}-x_i}\hh_t(\nu_{i+1},\nu_{i})\overline{s}_iF_{\overline{s}_i\nu}({\bf x};t).\]
\end{lemma}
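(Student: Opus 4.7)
The plan is to establish the identity by a case analysis on the comparison between $\nu_i$ and $\nu_{i+1}$, since the $\hh_t$-factors on the right-hand side and the applicable exchange equation from~\eqref{eq:qKZ1}--\eqref{eq:qKZ2} are both determined by this comparison. Morally, the claim is a repackaging of a single exchange equation into a form that expresses $F_\nu$ itself as a linear combination of $\overline{s}_iF_\nu$ and $\overline{s}_iF_{\overline{s}_i\nu}$.

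If $\nu_i=\nu_{i+1}$, then both $\hh_t(\nu_i,\nu_{i+1})$ and $\hh_t(\nu_{i+1},\nu_i)$ vanish, so the right-hand side collapses to $\overline{s}_iF_\nu=F_\nu$ by~\eqref{eq:qKZ2}, and there is nothing to do.

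Next, if $\nu_i<\nu_{i+1}$, then~\eqref{eq:qKZ1} gives $F_{\overline{s}_i\nu}=T_iF_\nu$. I would expand the definition of $T_i$ and apply $\overline{s}_i$ (which swaps $x_i$ and $x_{i+1}$) to both sides, producing a linear relation among $F_\nu$, $\overline{s}_iF_\nu$, and $\overline{s}_iF_{\overline{s}_i\nu}$. Solving this relation for $F_\nu$ and simplifying yields coefficients $\frac{(t-1)x_i}{tx_{i+1}-x_i}$ and $\frac{x_{i+1}-x_i}{tx_{i+1}-x_i}$, which are precisely what the right-hand side of the lemma becomes after the specializations $\hh_t(\nu_i,\nu_{i+1})=t$ and $\hh_t(\nu_{i+1},\nu_i)=1$.

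Finally, if $\nu_i>\nu_{i+1}$, the tuple $\overline{s}_i\nu$ falls under the previous case, so that case already supplies a formula for $F_{\overline{s}_i\nu}$ as a linear combination of $\overline{s}_iF_{\overline{s}_i\nu}$ and $\overline{s}_iF_\nu$. I would apply $\overline{s}_i$ to this formula, solve for $F_\nu$ in terms of $F_{\overline{s}_i\nu}$ and $\overline{s}_iF_{\overline{s}_i\nu}$, and then substitute back to eliminate the remaining occurrence of $F_{\overline{s}_i\nu}$; a short simplification using the identity $(tx_i-x_{i+1})(tx_{i+1}-x_i)-(t-1)^2x_ix_{i+1}=-t(x_i-x_{i+1})^2$ produces coefficients $\frac{(t-1)x_{i+1}}{tx_{i+1}-x_i}$ and $\frac{t(x_{i+1}-x_i)}{tx_{i+1}-x_i}$, matching the right-hand side after $\hh_t(\nu_i,\nu_{i+1})=1$ and $\hh_t(\nu_{i+1},\nu_i)=t$. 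None of this is genuinely difficult; the main potential pitfall is just sign bookkeeping when $\overline{s}_i$ is applied to the rational coefficients and making sure that the slightly asymmetric coefficients in the statement really arise out of the algebra.
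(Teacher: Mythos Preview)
Your proposal is correct and follows essentially the same route as the paper: both proceed by case analysis on the sign of $\nu_i-\nu_{i+1}$, handle the equal case via~\eqref{eq:qKZ2}, and handle $\nu_i<\nu_{i+1}$ by applying $\overline{s}_i$ to the exchange equation~\eqref{eq:qKZ1} and solving for $F_\nu$.

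The only noteworthy difference is in the case $\nu_i>\nu_{i+1}$. The paper again writes down the $\overline{s}_i$-twisted exchange equation (now with $\mu=\overline{s}_i\nu$) and then invokes the separately-checked symmetry $(1-\overline{s}_i)(F_\nu+F_{\overline{s}_i\nu})=0$ to substitute $F_{\overline{s}_i\nu}=\overline{s}_iF_\nu+\overline{s}_iF_{\overline{s}_i\nu}-F_\nu$ and rearrange. You instead reuse the already-proved case $\nu_i<\nu_{i+1}$ applied to $\overline{s}_i\nu$, pair it with its $\overline{s}_i$-conjugate, and eliminate $F_{\overline{s}_i\nu}$ by solving the resulting $2\times 2$ linear system; your identity $(tx_i-x_{i+1})(tx_{i+1}-x_i)-(t-1)^2x_ix_{i+1}=-t(x_i-x_{i+1})^2$ does the bookkeeping cleanly. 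Your route is slightly more computational but has the virtue of not requiring the extra symmetry observation; the paper's route is shorter once that symmetry is in hand. Both are perfectly valid.
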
 

\begin{proof}
If $\nu_i=\nu_{i+1}$, then $\hh_t(\nu_i,\nu_{i+1})=\hh_t(\nu_{i+1},\nu_{i})=0$, so the desired identity follows from \eqref{eq:qKZ2}. 

Now assume that $\nu_i<\nu_{i+1}$. In this case, we have $\hh_t(\nu_i,\nu_{i+1})=t$ and $\hh_t(\nu_{i+1},\nu_i)=1$. If we consider \eqref{eq:qKZ1} with the roles of $x_i$ and $x_{i+1}$ reversed and with $\mu=\nu$, we find that \[t\overline{s}_iF_\nu({\bf x};t)-\frac{tx_{i+1}-x_i}{x_{i+1}-x_i}(1-\overline{s}_i)\overline{s}_iF_\nu({\bf x};t)=\overline{s}_iF_{\overline{s}_i\nu}({\bf x};t),\] and this is equivalent to the desired identity. 

Finally, assume that $\nu_i>\nu_{i+1}$. In this case, we have $\hh_t(\nu_i,\nu_{i+1})=1$ and $\hh_t(\nu_{i+1},\nu_i)=t$. If we consider \eqref{eq:qKZ1} with the roles of $x_i$ and $x_{i+1}$ reversed and with $\mu=\overline{s}_i\nu$, we find that
\begin{equation}\label{eq:prelim}
t\overline{s}_iF_{\overline{s}_i\nu}({\bf x};t)-\frac{tx_{i+1}-x_i}{x_{i+1}-x_i}(1-\overline{s}_i)\overline{s}_iF_{\overline{s}_i\nu}({\bf x};t)=\overline{s}_iF_{\nu}({\bf x};t).
\end{equation} 
Using \eqref{eq:qKZ1} and the definition of $T_i$, one can check that ${(1-\overline{s}_i)(F_\nu({\bf x};t)+F_{\overline{s}_i\nu}({\bf x};t))=0}$. Thus, we can substitute $\overline{s}_iF_\nu({\bf x};t)+\overline{s}_iF_{\overline{s}_i\nu}({\bf x};t)-F_\nu({\bf x};t)$ in the place of $F_{\overline{s}_i\nu}({\bf x};t)$ in \eqref{eq:prelim}; rearranging the resulting equation then yields the desired identity. 
\end{proof}

The following lemma is the crux of the proof of \cref{thm:main_ASEP}. In what follows, we write $\mathbb P$ to denote transition probabilities in both the auxiliary TASEP and $\blacktriangle\AmASEP_\lambda$; it should be clear from context which is meant. 

\begin{lemma}\label{lem:main_ASEP}
Let $\sigma,\widehat\sigma\in\Omm$ be such that the transition probability $\mathbb P(\widehat\sigma\to\sigma)$ in the auxiliary TASEP is positive. For each $\mu\in S_\lambda$, we have 
\[\sum_{\widehat\mu\in S_\lambda}\mathbb P((\widehat\mu,\widehat\sigma)\to(\mu,\sigma))F_{\widehat\mu}(\widehat \sigma\ttt;t)=F_\mu(\sigma\ttt;t)\mathbb P(\widehat\sigma\to\sigma).\]
\end{lemma}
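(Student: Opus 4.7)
The plan is to split on the type of auxiliary-TASEP transition $\widehat\sigma \to \sigma$. By \eqref{eq:aux_transitions}, the hypothesis $\mathbb P(\widehat\sigma \to \sigma) > 0$ forces either the self-loop $\sigma = \widehat\sigma$ (with $\mathbb P(\widehat\sigma\to\sigma) = 1-\MM(\widehat\sigma)/n$) or $\sigma = \overline{s}_i\widehat\sigma$ for some $i \in \ZZ/n\ZZ$ satisfying $\rrho(\s_{\widehat\sigma^{-1}(i)}) < \rrho(\s_{\widehat\sigma^{-1}(i+1)})$ (with $\mathbb P(\widehat\sigma\to\sigma) = 1/n$). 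In each case I will identify the $\widehat\mu$'s that give nonzero contributions to the sum on the left-hand side, compute those terms, and match the result against the right-hand side.

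The self-loop case is essentially trivial. Inspecting \cref{def:stonedmASEP}, every outgoing transition from $(\widehat\mu,\widehat\sigma)$ either is the explicit self-loop (preserving both coordinates) or sends $\widehat\sigma$ to some $\overline{s}_i \widehat\sigma \neq \widehat\sigma$. Hence only $\widehat\mu = \mu$ contributes to the transition $(\widehat\mu,\widehat\sigma) \to (\mu,\widehat\sigma)$, and both sides of the claimed identity reduce to $(1-\MM(\widehat\sigma)/n)\, F_\mu(\widehat\sigma\ttt;t)$.

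The stone-swap case is the substantive one. I will set $y = \widehat\sigma\ttt$ and $p = p(\widehat\sigma^{-1}(i), \widehat\sigma^{-1}(i+1))$; because $\sigma\ttt = \overline{s}_i y$, one has $(\sigma\ttt)_i = y_{i+1}$, $(\sigma\ttt)_{i+1} = y_i$, and by \eqref{eq:p(jj')} the quantity $p$ equals $(y_i - y_{i+1})/(ty_i - y_{i+1})$. When $\mu_i = \mu_{i+1}$, only $\widehat\mu = \mu$ contributes (with probability $1/n$), and the identity collapses to $F_\mu(y;t) = F_\mu(\overline{s}_i y; t)$, which is exactly \eqref{eq:qKZ2}. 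When $\mu_i \neq \mu_{i+1}$, exactly two $\widehat\mu$'s contribute, coming from $\widehat\mu = \mu$ (the stones swap but the particles do not) and $\widehat\mu = \overline{s}_i \mu$ (both stones and particles swap); directly transcribing their transition probabilities from \cref{def:stonedmASEP} shows that the left-hand side equals
\[\frac{1}{n}\bigl[1 - p\,\hh_t(\mu_i,\mu_{i+1})\bigr] F_\mu(y;t) + \frac{1}{n}\, p\, \hh_t(\mu_{i+1},\mu_i)\, F_{\overline{s}_i\mu}(y;t).\]

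I will then close by invoking \cref{lem:rewriting} with $\nu = \mu$ and ${\bf x} = \sigma\ttt$. The main point to verify — and essentially the only subtlety — is that the substitution aligns correctly: under ${\bf x} = \sigma\ttt$, the rational function $(x_{i+1}-x_i)/(tx_{i+1}-x_i)$ evaluates to precisely $p$, which is the very reason \eqref{eq:p(jj')} was defined the way it was, and $\overline{s}_i F_\nu(\sigma\ttt;t) = F_\nu(y;t)$. With these matches in hand, \cref{lem:rewriting} expresses $F_\mu(\sigma\ttt;t)$ as exactly $n$ times the displayed left-hand side, giving the desired identity $\text{LHS} = \frac{1}{n} F_\mu(\sigma\ttt;t) = \mathbb P(\widehat\sigma\to\sigma)\, F_\mu(\sigma\ttt;t)$.
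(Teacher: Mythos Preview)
Your proposal is correct and follows essentially the same approach as the paper's proof: the same case split on $\widehat\sigma=\sigma$ versus $\widehat\sigma=\overline{s}_i\sigma$, followed by the sub-split on $\mu_i=\mu_{i+1}$ versus $\mu_i\neq\mu_{i+1}$, with the latter reduced to \cref{lem:rewriting} at ${\bf x}=\sigma\ttt$. Your explicit bookkeeping with $y=\widehat\sigma\ttt$ and the verification that $(x_{i+1}-x_i)/(tx_{i+1}-x_i)$ specializes to $p$ is slightly more detailed than the paper's terse invocation, but the substance is identical.
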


\begin{proof}
If $\widehat\sigma=\sigma$, then the desired result is immediate from \cref{def:stonedmASEP}, which tells us that \[\mathbb P((\mu,\sigma)\to(\mu,\sigma))=1-\frac{\MM(\sigma)}{n}=\mathbb P(\sigma\to\sigma)\] and that ${\mathbb P((\widehat\mu,\sigma)\to(\mu,\sigma))=0}$ for all $\widehat\mu\in S_\lambda\setminus\{\mu\}$. 

Now assume $\widehat\sigma\neq\sigma$. Then $\mathbb P(\widehat\sigma\to\sigma)=\frac{1}{n}$, and there exists $i\in\ZZ/n\ZZ$ such that $\sigma=\overline s_i\widehat\sigma$ and $\rrho(\s_{\sigma^{-1}(i+1)})<\rrho(\s_{\sigma^{-1}(i)})$. Also, $\mathbb P((\widehat\mu,\overline{s}_i\sigma)\to(\mu,\sigma))=0$ for all $\widehat\mu\in S_\lambda\setminus\{\mu,\overline{s}_i\mu\}$. We consider two cases. 

\medskip 

\noindent {\bf Case 1.} Assume $\mu_i=\mu_{i+1}$. In this case, we have $\mathbb P((\mu,\overline{s}_i\sigma)\to(\mu,\sigma))=\frac{1}{n}=\mathbb P(\overline{s}_i\sigma\to\sigma)$. The exchange equation \eqref{eq:qKZ2} tells us that $F_\mu=\overline{s}_iF_\mu$, so 
\begin{align*}
\sum_{\widehat\mu\in S_\lambda}\mathbb P((\widehat\mu,\widehat\sigma)\to(\mu,\sigma))F_{\widehat\mu}(\widehat\sigma\ttt;t)&=\mathbb P((\mu,\overline{s}_i\sigma)\to(\mu,\sigma))F_\mu(\overline{s}_i\sigma\ttt;t) \\  &=\mathbb P((\mu,\overline{s}_i\sigma)\to(\mu,\sigma))(\overline{s}_iF_\mu)(\sigma\ttt;t) \\ 
&=F_\mu(\sigma\ttt;t)\mathbb P(\widehat\sigma\to\sigma). 
\end{align*} 

\medskip 

\noindent {\bf Case 2.} Assume $\mu_i\neq\mu_{i+1}$. In this case, we have \[\mathbb P((\mu,\overline{s}_i\sigma)\to(\mu,\sigma))=\frac{1}{n}[1-p(\sigma^{-1}(i+1),\sigma^{-1}(i))\hh_t(\mu_i,\mu_{i+1})]\] and 
\[\mathbb P((\overline{s}_i\mu,\overline{s}_i\sigma)\to(\mu,\sigma))=\frac{1}{n}p(\sigma^{-1}(i+1),\sigma^{-1}(i))\hh_t(\mu_{i+1},\mu_{i}).\] Recall from \eqref{eq:p(jj')} that $p(\sigma^{-1}(i+1),\sigma^{-1}(i))=\dfrac{\chi_{\sigma^{-1}(i+1)}-\chi_{\sigma^{-1}(i)}}{t\chi_{\sigma^{-1}(i+1)}-\chi_{\sigma^{-1}(i)}}$. The desired result is now immediate if we set $\nu=\mu$ and ${\bf x}=\ttt$ in \cref{lem:rewriting}. 
\end{proof}

\begin{proof}[Proof of \cref{thm:main_ASEP}]
Fix a state $(\mu,\sigma)\in S_\lambda\times\Omm$. We will prove that the desired stationary distribution in the statement of the theorem satisfies the Markov chain balance equation at $(\mu,\sigma)$. This amounts to showing that 
\[\sum_{(\widehat\mu,\widehat\sigma)\in S_\lambda\times\Omm}\mathbb P((\widehat\mu,\widehat\sigma)\to(\mu,\sigma))F_{\widehat \mu}(\widehat\sigma\ttt;t)\omega(\widehat\sigma)=F_\mu(\sigma\ttt;t)\omega(\sigma),\] where $\omega$ is as in \eqref{eq:omega_circ}. Note that $\mathbb P((\widehat\mu,\widehat\sigma)\to(\mu,\sigma))$ is $0$ whenever the transition probability $\mathbb P(\widehat\sigma\to\sigma)$ in the auxiliary TASEP is $0$. Hence, it follows from \cref{lem:main_ASEP} that 
\[\sum_{(\widehat\mu,\widehat\sigma)\in S_\lambda\times\Omm}\mathbb P((\widehat\mu,\widehat\sigma)\to(\mu,\sigma))F_\mu(\widehat\sigma\ttt;t)\omega(\widehat\sigma) =F_\mu(\sigma\ttt;t)\sum_{\widehat\sigma\in\Omm}\mathbb P(\widehat\sigma\to\sigma)\omega(\widehat\sigma). \]
Because $\Omm$ is a stationary measure for the auxiliary TASEP, we know that \[\sum_{\widehat\sigma\in\Omm}\mathbb P(\widehat\sigma\to\sigma)\omega(\widehat\sigma)=\omega(\sigma),\] 
which completes the proof. 
\end{proof}

\subsection{Random Combinatorial Billiards}\label{subsec:billiards1} 

In this subsection, we specialize to the setting where $\lambda=(1,2,\ldots,n)$. The map $\SSS_n\to S_\lambda$ given by $w\mapsto(w^{-1}(1),\ldots,w^{-1}(n))$ allows us to identify $\SSS_n$ with $S_\lambda$. Let us also assume that $m=2$ and $\rrho(\s_1)=1<2=\rrho(\s_2)=\cdots=\rrho(\s_n)$. 

Fix a probability $p\in(0,1)$, and let $p(1,j)=p$ for all $2\leq j\leq n$. Let $\ttt=(\chi,1,\ldots,1)$, where \[\chi=\frac{1-p}{1-pt}.\] Then each equation of the form \eqref{eq:p(jj')} holds. Let $\ttt^{(k)}$ denote the $n$-tuple whose $k$-th entry is $\chi$ and whose other entries are all $1$ (so $\ttt=\ttt^{(1)}$). According to \cref{cor:main_ASEP}, the stationary probability of a state $(w,\sigma)$ of $\blacktriangle\AmASEP_\lambda$ is 
\begin{equation}\label{eq:pi1}
\pi(w,\sigma)=\frac{1}{Z'(\lambda,\rrho)}F_w(\ttt^{(\sigma(1))};t).
\end{equation}

Now consider the following random combinatorial billiard trajectory. Start at a point in the interior of the alcove $\BB$, and shine a beam of light in the direction of the vector ${{\ddd^{(n)}}=-ne_n+\sum_{j\in[n]}e_j}$. If at some point in time the beam of light is traveling in an alcove $\BB u$ and hits the hyperplane $\HH^{(u,s_i)}$, then it passes through with probability $p\,\hh_t(\overline{u}(i),\overline{u}(i+1))$ (thereby moving into the alcove $\BB s_iu$), and it reflects with probability $1-p\,\hh_t(\overline{u}(i),\overline{u}(i+1))$ (note that $\hh_t(\overline{u}(i),\overline{u}(i+1))$ only depends on the side of the hyperplane the light beam hits, not the particular alcove $\BB u$). Let us discretize this billiard trajectory; if the beam of light is in the alcove $\BB u$ and it is headed toward the facet of $\BB u$ contained in the hyperplane $\HH^{(u,s_i)}$, then we record the pair $(u,i)\in\affS_n\times\ZZ/n\ZZ$. Applying the quotient map $\affS_n\times\ZZ/n\ZZ\to\SSS_n\times\ZZ/n\ZZ$ defined by $(u,i)\mapsto(\overline u,i)$, we obtain a Markov chain ${\bf M}_{{\ddd^{(n)}}}^{(t)}$ with state space $\SSS_n\times\ZZ/n\ZZ$, which models a certain random combinatorial billiard trajectory in the $(n-1)$-dimensional torus $\mathbb T=V^*/Q^\vee$. Given a state $(w,i)$ of ${\bf M}_{{\ddd^{(n)}}}^{(t)}$, we can encode the permutation $w$ as usual by placing particles of species $w^{-1}(1),\ldots,w^{-1}(n)$ on the sites $1,\ldots,n$ (respectively) of the ring. We can also encode $i$ by placing an unnamed stone of density $1$ on site $i$ and placing unnamed (and indistinguishable) stones of density $2$ on all the other sites. (See \cref{fig:A}.) Let $\zeta_{{\ddd^{(n)}}}^{(t)}$ denote the stationary distribution of ${\bf M}_{{\ddd^{(n)}}}^{(t)}$. 

\begin{figure}[ht]
\begin{center}{\includegraphics[width=\linewidth]{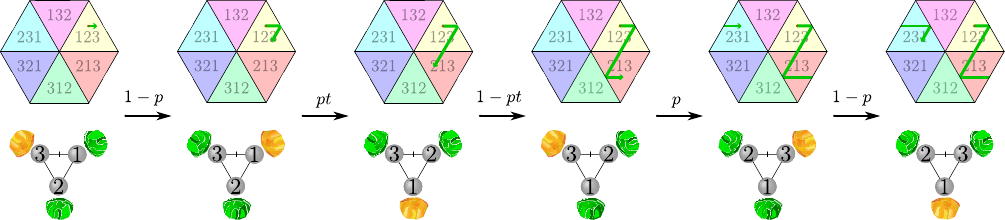}}
\end{center}
\caption{A sequence of transitions in ${\bf M}_{\ddd^{(3)}}^{(t)}$. Each state is represented both as a beam of light traveling in the $2$-dimensional torus $\mathbb T$ (top) and as a configuration of particles and stones on a ring with $3$ sites. The one stone of density $1$ is in {\color{Gold}gold}, while the two stones of density $2$ are in {\color{MildGreen}green}. Each transition is labeled with its probability.}   
\label{fig:A}
\end{figure}  

Our decision to initially shine the beam of light in the direction of the specific vector ${\ddd^{(n)}}$ is important; indeed, the fact that $\mathsf{w}(\ddd^{(n)})=\cdots s_{i_1}s_{i_0}$ with $i_j=j\in\ZZ/n\ZZ$ leads to the following simple description of the dynamics of ${\bf M}_{\ddd^{(n)}}^{(t)}$. In this Markov chain, we have 
\[\mathbb P((w,i)\to(\overline s_iw,i+1))=p\,\hh_t({w}^{-1}(i),{w}^{-1}(i+1))\] and \[\mathbb P((w,i)\to(w,i+1))=1-p\,\hh_t({w}^{-1}(i),{w}^{-1}(i+1)),\]
and all other transition probabilities are $0$. Thus, if the Markov chain is in state $(w,i)$, the stone of density $1$ moves (deterministically) one step clockwise from site $i$ to site $i+1$. When the stone moves, it sends a signal to the particles on sites $i$ and $i+1$ telling them to swap places. The signal reaches the particles with probability $p$. If the signal does not reach the particles, then the particles do not move. If the signal does reach the particles, then with probability $\hh_t(w^{-1}(i),w^{-1}(i+1))$, they decide to follow their orders and swap places, and with probability $1-\hh_t(w^{-1}(i),w^{-1}(i+1))$, they disregard the signal and stay put.  

The Markov chain ${\bf M}_{{\ddd^{(n)}}}^{(t)}$ is very similar to $\blacktriangle\AmASEP_\lambda$, but there are two notable differences. First, the auxiliary TASEP includes laziness. In other words, there are transitions in $\blacktriangle\AmASEP_\lambda$ in which no stones or particles move; this is in contrast to ${\bf M}_{{\ddd^{(n)}}}^{(t)}$, where the stone of density $1$ moves clockwise $1$ space at each time step. Second, in $\blacktriangle\AmASEP_\lambda$, the stones are given different names, while in ${\bf M}_{{\ddd^{(n)}}}^{(t)}$, the stones of density $2$ are indistinguishable. It is straightforward to see that these differences do not affect the stationary distribution except by scaling by a common factor. More precisely, for $w\in\SSS_n$ and $\sigma\in\Omm$, we have
\begin{equation}\label{eq:zeta_pi}\zeta_{{\ddd^{(n)}}}^{(t)}(w,\sigma(1))=(n-1)\pi(w,\sigma)
\end{equation} 
(the factor of $n-1$ is simply due to the fact that $|\Omm|=(n-1)|\ZZ/n\ZZ|$). 

The above discussion and \cref{cor:main_ASEP} tell us that $\zeta_{{\ddd^{(n)}}}^{(t)}$ is determined by ASEP polynomials. More precisely, 
\begin{equation}\label{eq:zeta_dd}
\zeta_{{\ddd^{(n)}}}^{(t)}(w,M)=\frac{1}{n}\frac{F_w(\ttt^{(M)};t)}{P_{(1,2,\ldots,n)}(\ttt;t)}.
\end{equation}

\subsection{Reduced Random Combinatorial Billiards}\label{subsec:reducedA} 

As before, let ${\ddd^{(n)}}=-ne_n+\sum_{j\in[n]}e_j$. The Markov chain ${\bf M}_{\ddd^{(n)}}$ defined in \cref{subsec:intro_billiards} is exactly the same as the Markov chain ${\bf M}_{\ddd^{(n)}}^{(t)}$ defined in the preceding subsection when $t=0$. Therefore, we can set $t=0$ in \eqref{eq:zeta_dd} to find that the vector $\psi_{\ddd^{(n)}}$ in \cref{thm:LamBilliards} is a positive scalar multiple of 
\begin{equation}\label{eq:AyyerIntermediate}
\sum_{\substack{w\in\SSS_n \\ w^{-1}(1)<w^{-1}(n)}}F_w(1,\ldots,1,1-p;0)\,(e_{w^{-1}(1)}-e_{w^{-1}(n)}).
\end{equation} In \cref{sec:correlations}, we will use multiline queues to prove \cref{thm:AyyerLinussonBilliards}, thereby providing an even simpler and more explicit description of $\langle\psi_{\ddd^{(n)}}\rangle$.

Let us now prove \cref{cor:LamBilliards2}. We need to understand the stationary distribution $\zeta_{-{\ddd^{(n)}}}$ of ${\bf M}_{-{\ddd^{(n)}}}$. To this end, we consider the stoned multispecies ASEP with an auxiliary TASEP that is different from the one used in \cref{subsec:billiards1}. Namely, we must now use the stone density function $\rrho$ defined by ${\rrho(\s_1)=\cdots=\rrho(\s_{n-1})=1}$ and $\rrho(\s_n)=2$. In this setting, the stone $\s_n$ moves counterclockwise around the cycle, as in \cref{exam:1stone}. We can encode a state $(w,M)$ of ${\bf M}_{-{\ddd^{(n)}}}$ by placing particles of species $w^{-1}(1),\ldots,w^{-1}(n)$ on the sites $1,\ldots,n$ (respectively) of the ring, placing an unnamed stone of density $2$ on site $n-M$, and placing unnamed (an indistinguishable) stones of density $1$ on all the other sites. Let $\overleftarrow\ttt=(1,\ldots,1,(1-p)^{-1})$, and let $\overleftarrow\ttt^{(k)}$ denote the $n$-tuple whose $k$-th entry is $(1-p)^{-1}$ and whose other entries are all $1$. An argument very similar to the one used to derive \eqref{eq:zeta_dd} (but now setting $t=0$ and using \eqref{eq:pi2} instead of \eqref{eq:pi1}) yields that 
\[\zeta_{-{\ddd^{(n)}}}(w,M)=\frac{1}{n}\frac{F_w(\overleftarrow\ttt^{(n-M)};0)}{P_{(1,2,\ldots,n)}(\overleftarrow\ttt;0)}.\]
By combining this identity and \eqref{eq:zeta_dd} (with $t=0$), we deduce \cref{cor:LamBilliards2} from \cref{thm:LamBilliards2}. 

\section{Correlations and Cores}\label{sec:correlations} 

Throughout this section, we assume $t=0$ unless otherwise stated. 

\subsection{Multiline Queues}\label{subsec:multiline_queues}
Let $m_i$ denote the number of entries equal to $i$ in our fixed $n$-tuple $\lambda$. Consider a $(\lambda_n+1)\times n$ cylindrical array $\mathsf{C}$ of sites, with rows numbered $0,1,\ldots,\lambda_n$ from top to bottom and with columns numbered $1,\ldots,n$ from left to right. The array is cylindrical in the sense that column $n$ is adjacent to column $1$. A \dfn{multiline queue} is an arrangement of balls in some of the sites of this array such that for each $0\leq r\leq \lambda_n$, row $r$ contains exactly $m_0+\cdots+m_r$ balls. Note that the total number of multiline queues is $\prod_{r=0}^{\lambda_n}\binom{n}{m_0+\cdots+m_r}$. We define the \dfn{weight} of a multiline queue $\QQ$ to be the monomial \[\wt_\QQ({\bf x})=x_1^{g_1(\QQ)}\cdots x_n^{g_n(\QQ)},\] where $g_j(\QQ)$ is the number of balls in $\QQ$ that lie in column $j$ and in one of the rows $0,1,\ldots,\lambda_n-1$ (note that this definition ignores balls in the bottom row).  

Let $\QQ$ be a multiline queue. We will construct a sequence $\pp_1,\ldots,\pp_n$, where each $\pp_i$ is a path in $\mathsf{C}$ called a \dfn{bully path}. Let $\ell\in[n]$, and suppose we have already constructed the bully paths $\pp_1,\ldots,\pp_{\ell-1}$. Let $r$ be the unique integer such that $m_0+\cdots+m_{r-1}<\ell\leq m_0+\cdots +m_{r}$. Let $b_\ell(r)$ be the leftmost ball in row $r$ that does not already have a bully path passing through it. Say $b_\ell(r)$ is in column $c_\ell(r)$. If $r<\lambda_n$, let $c_\ell(r+1)$ be the first entry in the list $c_\ell(r),c_\ell(r)+1,\ldots,c_\ell(r)+n-1$ (with entries taken modulo $n$) such that there is a ball in row $r+1$ and column $c_{\ell}(r+1)$ that does not already have a bully path passing through it, and let $b_\ell(r+1)$ be that ball. If $r+1<\lambda_n$,
let $c_\ell(r+2)$ be the first entry in the list $c_\ell(r+1),c_\ell(r+1)+1,\ldots,c_\ell(r+1)+n-1$ such that there is a ball in row $r+2$ and column $c_{\ell}(r+2)$ that does not already have a bully path passing through it, and let $b_\ell(r+2)$ be that ball. In general, if $r+k<\lambda_n$, let $c_\ell(r+k+1)$ be the first entry in the list $c_\ell(r+k),c_\ell(r+k)+1,\ldots,c_\ell(r+k)+n-1$ such that there is a ball in row $r+k+1$ and column $c_{\ell}(r+k+1)$ that does not already have a bully path passing through it, and let $b_\ell(r+k+1)$ be that ball. This process stops once the balls $b_\ell(r),b_\ell(r+1),\ldots,b_\ell(\lambda_n)$ have been defined. We then draw the bully path $\pp_\ell$, which is defined to be a path in $\mathsf{C}$ that moves monotonically down and to the right and that passes through the balls $b_\ell(r),b_\ell(r+1),\ldots,b_\ell(\lambda_n)$ and no other balls. See \cref{fig:multiline1}. 

\begin{figure}[ht]
\begin{center}{\includegraphics[height=4.491cm]{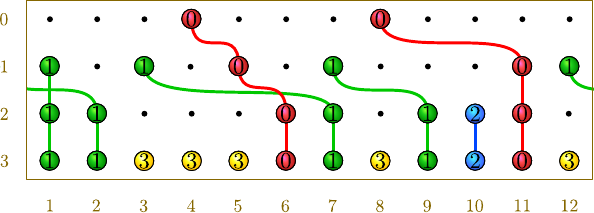}}
\end{center}
\caption{A multiline queue $\QQ$ for $\lambda=(0,0,1,1,1,1,2,3,3,3,3,3)$. We have $\bp(\QQ)=(1,1,3,3,3,0,1,3,1,2,0,3)$. }
\label{fig:multiline1}
\end{figure}

Once we have drawn all the bully paths $\pp_1,\ldots,\pp_n$ for $\QQ$, each ball will have a unique bully path passing through it. If a ball $b$ has the path $\pp_\ell$ passing through it, then we label $b$ with the unique number $r$ such that $m_0+\cdots+m_{r-1}<\ell\leq m_0+\cdots +m_{r}$ (that is, $r$ is the number of the row where the bully path passing through $b$ starts). By reading the labels of the balls in row $\lambda_n$ from left to right, we obtain a tuple $\bp(\QQ)\in S_\lambda$. For $\mu\in S_\lambda$, let $\mathfrak{Q}(\mu)$ be the set of multiline queues $\QQ$ such that $\bp(\QQ)=\mu$. A consequence of the main result from \cite{CMW} is that the ASEP polynomial $F_\mu$ (with $t=0$) is given by
\begin{equation}\label{eq:multiline}
F_\mu({\bf x};0)=\sum_{\QQ\in\mathfrak{Q}(\mu)}\wt_\QQ({\bf x}). 
\end{equation}

\subsection{Correlations} 
As in \cref{subsec:billiards1}, we now specialize to the setting where $\lambda=(1,2,\ldots,n)$, and we use the map $w\mapsto(w^{-1}(1),\ldots,w^{-1}(n))$ to identify $\SSS_n$ with $S_\lambda$. We also assume that $\rrho(\s_1)=1$ and $\rrho(\s_2)=\cdots=\rrho(\s_n)=2$. 

Given a nondecreasing $n$-tuple of nonnegative integers $\lambda'$, we define a map $\proj_{\lambda'}\colon\SSS_n\to S_{\lambda'}$ by \[\proj_{\lambda'}(w)=(\lambda'_{w^{-1}(1)},\ldots,\lambda'_{w^{-1}(n)}).\] Let $\pi_\lambda$ and $\pi_{\lambda'}$ denote the stationary distributions of $\blacktriangle\AmASEP_\lambda$ and $\blacktriangle\AmASEP_{\lambda'}$, respectively (defined using the same auxiliary TASEP). It is straightforward to check that 
\[\pi_{\lambda'}(\mu',\sigma)=\sum_{\substack{\mu\in S_\lambda \\ \proj_{\lambda'}(\mu)=\mu'}}\pi_\lambda(\mu,\sigma) 
\]
for all $(\mu',\sigma)\in S_{\lambda'}\times\Omega$. This equation encodes the \emph{projection principle}, which allows us to reduce problems concerning $\blacktriangle\AmASEP_{\lambda'}$ (for arbitrary $\lambda'$) to problems concerning $\blacktriangle\AmASEP_{\lambda}$ (for $\lambda=(1,2,\ldots,n)$). Thus, while the results in this section concern correlations in $\blacktriangle\AmASEP_{\lambda}$, one could use the projection principle to obtain similar results for $\blacktriangle\AmASEP_{\lambda'}$. 

Fix a permutation $\sigma\in\Omega$ such that $\sigma(1)=n$. 
For nonnegative integers $k$ and $\ell$ such that $k+\ell\leq n$, let 
\[\lambda(k,\ell)=(\underbrace{0,\ldots,0}_{k},\underbrace{1,\ldots,1}_{\ell},\underbrace{2\ldots,2}_{n-k-\ell})\] be the nondecreasing $n$-tuple that has $k$ copies of $0$, has $\ell$ copies of $1$, and has $n-k-\ell$ copies of $2$.

For integers $i$ and $j$ such that $1\leq i<j\leq n$, we wish to understand the correlation between the positions of the particles of species $i$ and $j$ in $\blacktriangle\AmASEP_\lambda$ when the auxiliary TASEP is in the state $\sigma$. More precisely, we are interested in the stationary probability of the event that the particle of species $i$ is immediately to the right of the particle with species $j$. By rotational symmetry, we may focus on the case in which the particles sit on sites $n$ and $1$. Thus, we wish to understand the quantity 
\[E_{j,i}=E_{j,i}^\sigma=\sum_{\substack{\mu\in S_\lambda \\ \mu_1=i,\,\mu_n=j}}\pi_\lambda(\mu,\sigma).\] 
It follows from \cref{thm:LamBilliards} and \eqref{eq:zeta_pi} (with $t=0$) that $\psi_{\ddd^{(n)}}$ is a positive scalar multiple of \[\sum_{1\leq i<j\leq n}E_{j,i}(e_i-e_j).\] Thus, the following proposition implies \cref{thm:AyyerLinussonBilliards}. 

\begin{proposition}\label{prop:correlations}
Fix $\sigma\in\Omm$ with $\sigma(1)=n$. For $1\leq i<j\leq n$, the quantity $E_{j,i}=E_{j,i}^\sigma$ is independent of $\sigma$ and is given by 
\[E_{j,i}=\frac{n(1-p)}{n-1}\cdot\frac{(j-i)(2n-(i+j-1)p)}{(n-ip)(n-(i-1)p)(n-jp)(n-(j-1)p)}.\]
\end{proposition}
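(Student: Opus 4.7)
The plan is to combine Corollary 4.2 with the multiline queue formula (5.1) to reduce the computation to a weighted enumeration of multiline queues that factors row by row. First, I apply Corollary 4.2 with $t=0$ (valid since $m=2$) to get $\pi_\lambda(\mu,\sigma)\propto F_\mu(\sigma\ttt;0)$. Since $\s_1$ is the unique density-$1$ stone and $\chi_2=\cdots=\chi_n=1$, the hypothesis $\sigma(1)=n$ forces $\sigma\ttt=(1,\ldots,1,\chi)$ with $\chi=1-p$, independently of the remaining values of $\sigma$. This immediately yields the claimed $\sigma$-independence and reduces the task to computing
\[
E_{j,i}=\frac{\sum_{\mu\in S_\lambda,\,\mu_1=i,\,\mu_n=j}F_\mu({\bf y};0)}{\sum_{\mu\in S_\lambda}F_\mu({\bf y};0)},\qquad {\bf y}=(1,\ldots,1,\chi).
\]
By (5.1), each ASEP polynomial expands as a sum over multiline queues, and since only the last coordinate of ${\bf y}$ differs from $1$, the weight of a queue $\QQ$ reduces to $\chi^{g_n(\QQ)}$.

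For $\lambda=(1,2,\ldots,n)$, row $r$ of a multiline queue contains exactly $r$ balls, row $n$ is completely filled, bully path $\pp_\ell$ starts in row $\ell$, and $\bp(\QQ)\in\SSS_n$ is the permutation recording the starting row of the bully path hitting each column of row $n$. Since the ball placements in different rows are independent, the denominator factors immediately:
\[
\sum_\QQ\chi^{g_n(\QQ)}=\prod_{r=1}^{n-1}\Bigl[\tbinom{n-1}{r-1}\chi+\tbinom{n-1}{r}\Bigr]=\prod_{r=1}^{n-1}\frac{\binom{n}{r}(n-rp)}{n},
\]
using $r\chi+(n-r)=n-rp$. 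The numerator, i.e., the weighted count of multiline queues with $\bp(\QQ)_1=i$ and $\bp(\QQ)_n=j$, is the core computation. My plan is to enumerate row by row: in each row $r$, the positions of the $\pp_i$-ball and $\pp_j$-ball are determined by the bully-path rule (move monotonically down-right with cyclic wraparound to the leftmost available ball) from where those paths sat in row $r-1$. Conditioning on these columns in each row should decouple the enumeration into independent row-wise factors. After dividing by the denominator, most factors should cancel, leaving the claimed expression.

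The main obstacle is this numerator enumeration. The cylindrical wraparound allows bully paths to sweep past column $n$ multiple times, each sweep contributing a factor of $\chi$, and the terminal constraints that $\pp_i$ end at column $1$ and $\pp_j$ end at column $n$ couple the two paths in rows $\max(i,j),\ldots,n$. The clean product form of the target answer suggests that the correct grouping of multiline queues---likely by the column positions of $\pp_i$ and $\pp_j$ in each intermediate row---will produce a row-wise product in which only the rows indexed by $\{i-1,i,j-1,j\}$ contribute factors differing from the denominator, with the extra numerator $(j-i)(2n-(i+j-1)p)$ arising from a small residual sum over these constrained rows. Carrying out this combinatorial accounting precisely, and verifying that the prefactor $n(1-p)/(n-1)$ emerges correctly from the row-$n$ boundary and normalization, is the delicate step.
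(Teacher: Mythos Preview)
Your reduction is correct: with $\sigma(1)=n$ and $t=0$ you get $\sigma\ttt=(1,\dots,1,1-p)$ independently of the rest of $\sigma$, and the partition function $\sum_\mu F_\mu(\mathbf{y};0)=P_\lambda(\mathbf{y};0)$ factors over rows exactly as you wrote. The gap is entirely in the numerator.

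Your plan to track the columns of $\pp_i$ and $\pp_j$ row by row through the full $(n{+}1)$-row multiline queue is not well-posed as stated. Bully paths are constructed \emph{sequentially}: $\pp_i$ may only use balls in each row that have not already been claimed by $\pp_1,\dots,\pp_{i-1}$, so the trajectory of $\pp_i$ depends on all earlier paths, not just on the raw ball positions. You cannot isolate the pair $(\pp_i,\pp_j)$ without simultaneously controlling $\pp_1,\dots,\pp_{j-1}$, and this destroys any hope of the clean row-wise decoupling you are counting on. Your guess that ``only the rows indexed by $\{i-1,i,j-1,j\}$ contribute factors differing from the denominator'' is unsupported, and you acknowledge that the computation has not been carried out.

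The paper's key idea, which you are missing, is the \emph{projection principle}. Rather than working in the $n$-species queue, one passes to the three-part composition $\lambda(k,\ell)=(0^k,1^\ell,2^{n-k-\ell})$ and studies
\[
D_{k,\ell}=\sum_{\substack{\mu'\in S_{\lambda(k,\ell)}\\ \mu'_1=1,\ \mu'_n=2}}\pi_{\lambda(k,\ell)}(\mu',\sigma)=\sum_{k<i\le k+\ell<j\le n}E_{j,i},
\]
from which $E_{j,i}$ is recovered by a four-term inclusion--exclusion in $(k,\ell)$. The payoff is that $D_{k,\ell}$ lives in a \emph{three-row} multiline queue, and there the constraint $(\mu'_1,\mu'_n)=(1,2)$ forces rows $0$ and $1$ to have no ball in column $n$. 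Hence every contributing queue has $g_n(\QQ)=0$, so the weighted numerator equals the unweighted one already computed by Ayyer--Linusson, and all of the $p$-dependence sits in the easy two-factor partition function for $\lambda(k,\ell)$. This structural observation---that after projection the numerator is literally \emph{independent of $p$}---is what makes the closed form fall out; a direct $n$-row attack does not expose it.
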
 

\begin{proof}
We follow the approach of Ayyer and Linusson from \cite{AyyerLinusson}. Define \[D_{k,\ell}=\sum_{\substack{\mu\in S_{\lambda(k,\ell)} \\ \mu_1=1,\,\mu_n=2}}\pi_{\lambda(k,\ell)}(\mu,\sigma).\] By the projection principle,  
\[D_{k,\ell}=\sum_{k<i\leq k+\ell<j\leq n}E_{j,i}.\] By the principle of inclusion-exclusion, we have 
\begin{equation}\label{eq:PIE}
E_{j,i}=D_{i-1,j-i}-D_{i,j-i-1}-D_{i-1,j-i+1}+D_{i,j-i}. 
\end{equation} 
Thus, we just need to know how to compute $D_{k,\ell}$; this is manageable because it just requires us to understand a stoned $3$-species ASEP (rather than a stoned $n$-species ASEP).  

Suppose $\mu\in S_{\lambda(k,\ell)}$ is such that $\mu_1=1$ and $\mu_n=2$. Observe that every multiline queue $\QQ\in \mathfrak{Q}(\mu)$ must have the following form: 
\[\begin{array}{l}\includegraphics[height=3.48cm]{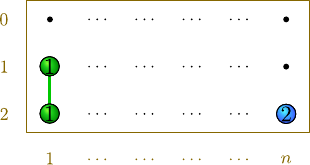}\end{array}.\]
That is, among the four positions in rows $0$ and $1$ and columns $1$ and $n$, there must be exactly one ball, and that ball must be in row $1$ and column $1$. This implies that every monomial $\wt_\QQ({\bf x})$ for $\QQ\in\mathfrak Q(\mu)$ is independent of $x_n$ (this is the crucial fact that permits the rest of our analysis in this section). It follows from \eqref{eq:multiline} that 
\[\sum_{\substack{\mu\in S_{\lambda(k,\ell)} \\ \mu_1=1,\, \mu_n=2}}F_\mu(1,\ldots,1,1-p;0)=\sum_{\substack{\mu\in S_{\lambda(k,\ell)} \\ \mu_1=1,\, \mu_n=2}}F_\mu(1,\ldots,1,1;0),\] and Ayyer and Linusson already computed that  
\[\sum_{\substack{\mu\in S_{\lambda(k,\ell)} \\ \mu_1=1,\, \mu_n=2}}F_\mu(1,\ldots,1,1;0)=\frac{\ell}{n-k-1}\binom{n-2}{n-k-2}\binom{n-1}{n-k-\ell-1}\] 
(see the proof of \cite[Theorem~4.2]{AyyerLinusson}). Applying \cref{cor:main_ASEP}, we find that 
\begin{equation}\label{eq:Dkl}
D_{k,\ell}=\frac{\ell}{n-k-1}\binom{n-2}{n-k-2}\binom{n-1}{n-k-\ell-1}\left(\sum_{\mu\in S_{\lambda(k,\ell)}}F_\mu(1,\ldots,1,1-p;0)\right)^{-1}.
\end{equation} 

According to \eqref{eq:multiline}, \[\sum_{\mu\in S_{\lambda(k,\ell)}}F_\mu(1,\ldots,1,1-p;0)\] can be computed as a weighted sum over $3\times n$ multiline queues with $k$ balls in row $0$ and $k+\ell$ balls in row $1$, where each multiline queue $\QQ$ is weighted by $(1-p)^{g_n(\QQ)}$. It follows that
\begin{align*}
\sum_{\mu\in S_{\lambda(k,\ell)}}F_\mu(1,\ldots,1,1-p;0)&=\left[\binom{n-1}{k}+\binom{n-1}{k-1}(1-p)\right]\cdot \left[\binom{n-1}{k+\ell}+\binom{n-1}{k+\ell-1}(1-p)\right].
\end{align*} 
Substituting this into \eqref{eq:Dkl} and simplifying, we find that 
\[D_{k,\ell}=\frac{\ell(n-k)(n-k-\ell)}{(n-1)(n-pk)(n-p(k+\ell))}.\] Finally, substituting this formula for $D_{k,\ell}$ into \eqref{eq:PIE} and simplifying the resulting expression for $E_{j,i}$ completes the proof. 
\end{proof}

\subsection{Cores} 

A \dfn{partition} is a nonincreasing tuple of positive integers. We identify a partition $\nu=(\nu_1,\ldots,\nu_\ell)$ with its \dfn{Young diagram}, which is the left-justified arrangement of unit-length axis-parallel boxes in $\mathbb R^2$ in which the $i$-th row (counted from the top) contains $\nu_i$ boxes and the top left corner of the top left box is the origin. For $d\in\ZZ$, we define the $d$-th \dfn{diagonal} to be the line $\{(x,y)\in\mathbb R^2:x+y=d\}$, and we say a unit-length box lies on the $d$-th diagonal if its center is on the $d$-th diagonal. We let ${\bf D}(\nu)\subseteq\mathbb R^2$ denote the scaled version of the Young diagram of $\nu$ whose total area is $1$ and whose upper-left corner is the origin. 

The \dfn{hook} of a box $b$ in $\nu$ is the collection of boxes in $\nu$ that are in the same row as $b$ and lie weakly to the right of $b$ or that are in the same column as $b$ and lie weakly below $b$. The number of boxes in the hook of $b$ is called the \dfn{hook length} of $b$. We say $\nu$ is an \dfn{$n$-core} if none of the hook lengths of the boxes in $\nu$ are divisible by $n$. Let $\Xi_n$ denote the set of $n$-cores.

Let $\nu$ be a partition. Let $\ADD(\nu)$ be the set of boxes $b$ that are not in $\nu$ such that adding $b$ to $\nu$ results in a valid partition. For $d_0\in\ZZ/n\ZZ$, let $\ADD_{d_0}(\nu)$ be the set of boxes $b\in \ADD(\nu)$ such that $b$ lies on a diagonal $d$ with $d\equiv d_0\pmod n$. If $\nu$ is an $n$-core and $\ADD_{d_0}(\nu)$ is nonempty, then we can simultaneously add all boxes in $\ADD_{d_0}(\nu)$ to $\nu$ to obtain a new $n$-core $\gamma_{d_0}(\nu)$; let us draw an arrow labeled $d_0$ from $\nu$ to $\gamma_{d_0}(\nu)$. By drawing all arrows of this form, we obtain an edge-labeled directed graph whose vertex set is the set $\Xi_n$ of $n$-cores; we call this the \dfn{$n$-core graph}. 

Recall that $\widehat\SSS_n=\{w\in\affS_n:\BB w\subseteq\CC\}$ is the set of affine Grassmannian elements of $\affS_n$. For $w\in\widehat\SSS_n$, let $\kappa(w)=\gamma_{j_k}\cdots\gamma_{j_1}(\varnothing)$, where $s_{j_k}\cdots s_{j_1}$ is a reduced word for $w$ and $\varnothing$ is the empty partition. The resulting (well-defined) map $\kappa\colon\widehat\SSS_n\to\Xi_n$ is a bijection. The $n$-core graph and the bijection $\kappa$ are illustrated in \cref{fig:B} when $n=3$. 

\begin{figure}[ht]
\begin{center}{\includegraphics[height=8cm]{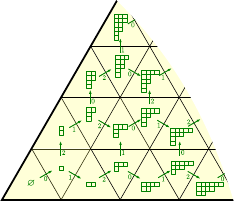}}
\end{center}
\caption{Part of the $3$-core graph drawn inside the fundamental chamber of $\mathcal H_{\SSS_3}$. Each $3$-core $\nu$ is drawn inside the alcove $\BB\kappa^{-1}(\nu)$.} 
\label{fig:B}
\end{figure} 

By passing the affine Grassmannian reduced random walk through the bijection $\kappa$, Lam formulated a new random growth process for $n$-cores \cite{Lam}. In a similar vein, we can pass our affine Grassmannian reduced random combinatorial billiard trajectories through $\kappa$ to obtain a different random growth process. In other words, if we let $(v_M,M)$ denote the state of $\widehat{\bf M}_{\ddd^{(n)}}$ at time $M$, then $(\kappa(v_M))_{M\geq 0}$ is a (stochastically) growing sequence of $n$-cores. 

Given points ${\bf r},{\bf r}'\in\mathbb R^2$, let $[{\bf r},{\bf r}']$ denote the line segment whose endpoints are ${\bf r}$ and ${\bf r}'$. Let $\dH(R_1,R_2)$ denote the Hausdorff distance between two sets $R_1,R_2\subseteq\mathbb R^2$. The next proposition is a consequence of \cite[Proposition~8.10]{LLMS}. 

\begin{proposition}\label{prop:limit_shape}
Let $\beta=(\beta_1,\ldots,\beta_n)\in \CC\setminus\{0\}$, and set $h(i)=\beta_i-\beta_{i+1}$. Let \[{\bf r}_k=\left(\sum_{i=1}^{k-1}ih(i),-\sum_{i=k}^{n-1}(n-i)h(i)\right)\in\mathbb R^2.\] Let $\mathring{\bf R}^{(p)}_\beta$ be the region in $\mathbb R^2$ bounded by the coordinate axes and the curve $\bigcup_{k=1}^{n-1}[{\bf r}_k,{\bf r}_{k+1}]$. Let ${\bf R}^{(p)}_\beta=\mathrm{A}_\beta^{-1/2}\mathring{\bf R}^{(p)}_\beta$, where $\mathrm{A}_\beta$ is the area of $\mathring{\bf R}^{(p)}_\beta$. If $(w_M)_{M\geq 0}$ is a sequence of affine Grassmannian elements of $\affS_n$ such that 
\[\lim_{M\to\infty}\langle \BB w_M\rangle=\langle\beta\rangle,\] then 
\[\lim_{M\to\infty}\dH({\bf D}(\kappa(w_M)),{\bf R}^{(p)}_\beta)=0.\]  
\end{proposition}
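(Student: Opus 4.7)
The plan is to derive this directly from the cited result [LLMS, Proposition 8.10]. That proposition gives, for each affine Grassmannian element $w=\overline w\tau_{\dd_w}\in\widehat\SSS_n$, an explicit description of the Young diagram of $\kappa(w)$ in terms of $\dd_w$; in particular, after rescaling by $|\dd_{w_M}|^{-1}$, the unscaled Young diagram of $\kappa(w_M)$ converges in Hausdorff distance to a convex polygonal region depending only on the limit direction $\beta=\lim_{M\to\infty}\langle\BB w_M\rangle$.

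First, I would unpack [LLMS, Proposition 8.10] in the coordinates used here, so that the limiting convex polygon is expressed as the region in $\mathbb R^2$ bounded by the coordinate axes and a piecewise linear convex curve. I would then identify this curve with $\bigcup_{k=1}^{n-1}[{\bf r}_k,{\bf r}_{k+1}]$ by matching successive increments. Note that
\[{\bf r}_{k+1}-{\bf r}_k=(k\,h(k),\,(n-k)\,h(k)),\]
so the segments have strictly decreasing slope $(n-k)/k$ (using $h(i)\geq 0$, which holds because $\beta\in\CC$), making the curve convex. The total horizontal and vertical extents $\sum_{i=1}^{n-1}i\,h(i)$ and $\sum_{i=1}^{n-1}(n-i)h(i)$ match the asymptotic column and row counts of $\kappa(w_M)$ derived from [LLMS].

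Since ${\bf D}(\nu)$ is defined to be the area-one rescaling of the Young diagram of $\nu$, a sequence of unscaled diagrams converging (after some scaling) to a positive multiple of $\mathring{\bf R}^{(p)}_\beta$ becomes, after area-one rescaling, precisely ${\bf R}^{(p)}_\beta=A_\beta^{-1/2}\mathring{\bf R}^{(p)}_\beta$. Hausdorff convergence is preserved under the homothety, yielding the claimed limit.

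The main obstacle is purely notational: carefully translating the affine-permutation-theoretic formulation in [LLMS] (phrased via residue counts of diagonals and the abacus model for $n$-cores) into the planar coordinates $(x,y)$ used here, and tracking the overall scale factor so that the identification with $\mathring{\bf R}^{(p)}_\beta$ is exact. No genuinely new combinatorial input is required beyond the cited proposition.
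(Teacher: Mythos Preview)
Your proposal is correct and aligns exactly with the paper's approach: the paper does not give its own proof but simply states that the proposition is a consequence of \cite[Proposition~8.10]{LLMS}. Your outline of how to extract the piecewise-linear limit curve from that result (and then rescale to unit area) is precisely the intended derivation.
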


By combining \cref{prop:limit_shape} with \cref{thm:AyyerLinusson}, Ayyer and Linusson gave an exact description of the limit shape of the scaled Young diagrams in Lam's random growth process for $n$-cores (see \cite[Theorem~3.2]{AyyerLinusson}). We will carry out a similar (though a bit more involved) analysis for our random growth process $(\kappa(v_M))_{M\geq 0}$ (where $(v_M,M)$ is the state of $\widehat{\bf M}_{\ddd^{(n)}}$ at time $M$). 

Let 
$\gamma^{(n)}=(\gamma^{(n)}_1,\ldots,\gamma^{(n)}_n)=\sum_{1\leq i<j\leq n}\widehat E_{j,i}(e_i-e_j)$, where
\[\widehat E_{j,i}=\frac{(j-i)(2n-(i+j-1)p)}{(n-ip)(n-(i-1)p)(n-jp)(n-(j-1)p)},\]
and set $h(i)=\gamma^{(n)}_i-\gamma^{(n)}_{i+1}$. Then 
\begin{align}
\nonumber h(i)&=\sum_{j=1}^n\left(\widehat E_{j,i}-\widehat E_{j,i+1}\right) \\ 
\nonumber &= \sum_{j=1}^n\left(\frac{2}{(n-(i-1)p)(n-ip)(n-(i+1)p)}\right) \\ 
\label{eq:h(i)}&=\frac{2n}{(n-(i-1)p)(n-ip)(n-(i+1)p)}. 
\end{align}
Substituting this formula for $h(i)$ into \cref{prop:limit_shape} allows one to compute the region ${\bf R}^{(p)}_{\gamma^{(n)}}$. \cref{thm:AyyerLinussonBilliards,prop:limit_shape} then combine to yield the following result, which is illustrated (for $n=3$ and $p=3/4$) in \cref{fig:limit_shape}. 

\begin{corollary}\label{cor:shape}
Let $(v_M,M)$ be the state of $\widehat{\bf M}_{\ddd^{(n)}}$ at time $M$. Then 
\[\lim_{M\to\infty}\dH({\bf D}(\kappa(v_M)),{\bf R}^{(p)}_{\gamma^{(n)}})=0.\]   
\end{corollary}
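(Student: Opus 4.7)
The plan is to derive \cref{cor:shape} as a direct consequence of \cref{thm:LamBilliards}, \cref{thm:AyyerLinussonBilliards}, and \cref{prop:limit_shape}; each ingredient has been set up precisely for this application, so the proof reduces to bookkeeping. The strategy is to feed the almost sure limit direction of $\langle \BB v_M\rangle$ into \cref{prop:limit_shape} with $\beta=\gamma^{(n)}$.

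First, since each $v_M$ is affine Grassmannian by construction, equation \eqref{eq:AGpsi} in \cref{thm:LamBilliards} (applied with $\dd=\ddd^{(n)}$) gives that, with probability $1$,
\[\lim_{M\to\infty}\langle \BB v_M\rangle=\langle\psi_{\ddd^{(n)}}\rangle.\]
By \cref{thm:AyyerLinussonBilliards}, $\psi_{\ddd^{(n)}}$ is a positive scalar multiple of $\sum_{1\le i<j\le n}\widehat E_{j,i}(e_i-e_j)$, which is exactly the vector $\gamma^{(n)}$ defined just above \cref{cor:shape}. Hence $\langle\psi_{\ddd^{(n)}}\rangle=\langle\gamma^{(n)}\rangle$ almost surely.

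Next, I would verify the hypothesis $\gamma^{(n)}\in\CC\setminus\{0\}$ required to apply \cref{prop:limit_shape}. The computation labeled \eqref{eq:h(i)} gives
\[h(i)=\gamma^{(n)}_i-\gamma^{(n)}_{i+1}=\frac{2n}{(n-(i-1)p)(n-ip)(n-(i+1)p)}>0\]
for each $i\in[n-1]$, so the coordinates of $\gamma^{(n)}$ are strictly decreasing. Since $\gamma^{(n)}$ is a real linear combination of vectors of the form $e_i-e_j$, its coordinates also sum to $0$, so $\gamma^{(n)}\in V^*$. Strict decrease then places $\gamma^{(n)}$ in the (open) fundamental chamber; in particular $\gamma^{(n)}\neq 0$.

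Finally, apply \cref{prop:limit_shape} with $\beta=\gamma^{(n)}$ along the almost sure sequence of affine Grassmannian elements $w_M=v_M$. On the full-measure event where $\langle\BB v_M\rangle\to\langle\gamma^{(n)}\rangle$, the proposition yields
\[\lim_{M\to\infty}\dH({\bf D}(\kappa(v_M)),{\bf R}^{(p)}_{\gamma^{(n)}})=0,\]
which is the desired conclusion. There is no substantive obstacle; the only consistency check is that the function $h(i)$ entering the statement of \cref{prop:limit_shape} (with $\beta=\gamma^{(n)}$) coincides with the one already computed in \eqref{eq:h(i)}, and this is immediate from the two definitions.
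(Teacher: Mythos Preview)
Your proof is correct and follows essentially the same route as the paper, which simply states that \cref{thm:AyyerLinussonBilliards} and \cref{prop:limit_shape} combine to give the result (with \cref{thm:LamBilliards} supplying the almost sure convergence of $\langle\BB v_M\rangle$). Your explicit verification that $\gamma^{(n)}\in\CC\setminus\{0\}$ via the positivity of $h(i)$ is a useful detail the paper leaves implicit; the only trivial wording slip is that the equality $\langle\psi_{\ddd^{(n)}}\rangle=\langle\gamma^{(n)}\rangle$ is deterministic, so ``almost surely'' belongs with the limit statement rather than with this identity.
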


\begin{figure}[ht]
\begin{center}{\includegraphics[height=6cm]{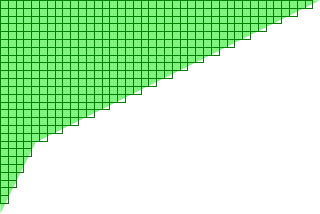}}
\end{center}
\caption{In {\color{DarkGreen}dark green} is the $3$-core corresponding to the last visible point in the affine Grassmannian reduced random billiard trajectory shown in \cref{fig:Trajectory2} (with $p=3/4$). In {\color{LightGreen}light green} is the limit shape ${\bf R}^{(p)}_{\gamma^{(3)}}$, which is bounded by the coordinate axes and the line segments $[(0,-6\sqrt{62/3}),(\sqrt{62/3},-4\sqrt{62/3})]$ and $[(\sqrt{62/3},-4\sqrt{62/3}),(9\sqrt{62/3},0)]$.} 
\label{fig:limit_shape}
\end{figure} 

The next proposition computes the limit of the regions ${\bf R}^{(p)}_{\gamma^{(n)}}$ as $n\to\infty$. 

\begin{remark}
One of the anonymous referees pointed out that \cref{prop:shape} can also be deduced by analyzing
the fan solution of the hydrodynamic equations of the TASEP with geometric jumps (see, e.g., equation (63) in \cite{Rajewsky}). 
\end{remark} 

\begin{proposition}\label{prop:shape} 
We have \[\lim_{n\to\infty}\dH({\bf R}^{(p)}_{\gamma^{(n)}},{\bf R}_\infty^{(p)})=0,\] where \[{\bf R}_\infty^{(p)}=\{(x,y)\in\mathbb R^2:y\leq 0\leq x,\, \sqrt{(1-p)x}+\sqrt{-y}\leq (6(1-p))^{1/4}\}.\] 
\end{proposition}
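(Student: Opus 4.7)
The plan is to analyze the unscaled region $\mathring{\mathbf{R}}^{(p)}_{\gamma^{(n)}}$ from \cref{prop:limit_shape} in three stages: establish uniform convergence of its polygonal boundary vertices to an explicit smooth curve, identify the equation of that limit curve, and compute the area needed to perform the area-$1$ normalization.

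First, I would use the formula $h(i) = \frac{2n}{(n-(i-1)p)(n-ip)(n-(i+1)p)}$ from \eqref{eq:h(i)}. Setting $s = i/n$, this expands as $h(i) = \frac{2}{n^2(1-sp)^3}(1 + O(1/n^2))$ uniformly for $s \in [0,1]$ (the denominator stays bounded away from $0$ since $p \in (0,1)$). Hence for fixed $t \in [0,1]$ and any sequence $k_n$ with $k_n/n \to t$, the coordinates of $\mathbf{r}_{k_n}$ are Riemann sums converging to
\[
X(t) = \int_0^t \frac{2s}{(1-sp)^3}\, ds = \frac{t^2}{(1-tp)^2}, \qquad Y(t) = -\int_t^1 \frac{2(1-s)}{(1-sp)^3}\, ds = -\frac{(1-t)^2}{(1-p)(1-tp)^2};
\]
both antiderivatives are easily obtained via the substitution $u = 1 - sp$. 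The convergence is uniform in $t$, so the polygonal boundary of $\mathring{\mathbf{R}}^{(p)}_{\gamma^{(n)}}$ converges in Hausdorff distance to the smooth arc $\Gamma = \{(X(t),Y(t)) : t \in [0,1]\}$.

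Next comes the key algebraic cancellation:
\[
\sqrt{(1-p)\,X(t)} + \sqrt{-Y(t)} \;=\; \frac{\sqrt{1-p}\,t}{1-tp} + \frac{1-t}{\sqrt{1-p}\,(1-tp)} \;=\; \frac{(1-p)t + (1-t)}{\sqrt{1-p}\,(1-tp)} \;=\; \frac{1}{\sqrt{1-p}},
\]
so $\Gamma$ is exactly the arc $\sqrt{(1-p)X} + \sqrt{-Y} = (1-p)^{-1/2}$ in the closed fourth quadrant. Substituting $w = \sqrt{(1-p)X}$, the area enclosed by $\Gamma$ and the coordinate axes is
\[
A_\infty \;=\; \frac{2}{1-p}\int_0^{(1-p)^{-1/2}}\!\!\bigl((1-p)^{-1/2} - w\bigr)^2 w\, dw \;=\; \frac{1}{6(1-p)^3}.
\]
Since the areas $\mathrm{A}_{\gamma^{(n)}}$ are themselves Riemann sums (the shoelace/trapezoid area of the polygonal boundary), the uniform convergence above also yields $\mathrm{A}_{\gamma^{(n)}} \to A_\infty$.

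Finally, applying the scaling by $A_\infty^{-1/2}$ (as dictated by \cref{prop:limit_shape}) transforms the equation of $\Gamma$ into
\[
\sqrt{(1-p)\,x} + \sqrt{-y} \;=\; A_\infty^{-1/4}\,(1-p)^{-1/2} \;=\; 6^{1/4}(1-p)^{3/4} \cdot (1-p)^{-1/2} \;=\; (6(1-p))^{1/4},
\]
which is exactly the boundary of $\mathbf{R}^{(p)}_\infty$. Since each region in question is bounded by the coordinate axes and a monotone curve, Hausdorff convergence of the boundary curves implies Hausdorff convergence of the regions. The only technical obstacle is ensuring uniformity of the Riemann-sum convergence near $t=0$ and $t=1$, which is straightforward because the integrands $s/(1-sp)^3$ and $(1-s)/(1-sp)^3$ are smooth and bounded on $[0,1]$; the genuine content is the clean identity that factors $X(t)$ and $-Y(t)$ as perfect squares of rational functions, which is ultimately what makes $\mathbf{R}^{(p)}_\infty$ take such a simple form.
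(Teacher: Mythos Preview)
Your proof is correct and follows essentially the same route as the paper's: approximate the coordinates of $\mathbf{r}_k$ by Riemann sums converging to $\bigl(\tfrac{t^2}{(1-tp)^2},\,-\tfrac{(1-t)^2}{(1-p)(1-tp)^2}\bigr)$, recognize this as the curve $\sqrt{(1-p)x}+\sqrt{-y}=(1-p)^{-1/2}$, and then rescale to area~$1$. You supply two details the paper leaves implicit---the explicit algebraic verification of the curve equation and the computation $A_\infty=\tfrac{1}{6(1-p)^3}$---but otherwise the arguments coincide.
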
 

\begin{proof}
We consider asymptotics as $n\to\infty$. Let $h(i)$ be as in \eqref{eq:h(i)}. Choose $\alpha\in[0,1]$, and let $k=\left\lfloor\alpha n\right\rfloor$. Let ${\bf r}_k$ be as in \cref{prop:limit_shape}. Note that ${\bf r}_k$ is on the boundary of ${\bf R}^{(p)}_{\gamma^{(n)}}$. The first coordinate of ${\bf r}_k$ is 
\begin{align*}
\sum_{i=1}^{k-1}ih(i)&=\sum_{i=1}^{k-1}\frac{2ni}{(n-ip)^3}(1+O(n^{-2})) \\ 
&=\sum_{i=1}^{k-1}\frac{2i/n}{(1-p(i/n))^3}\frac{1}{n}(1+O(n^{-2})) \\ 
&= \int_0^\alpha \frac{2u}{(1-pu)^3}\,\mathrm{d}u\,(1+o(1)) \\ 
&= \frac{\alpha^2}{(1-p\alpha)^2}(1+o(1)), 
\end{align*}
and the second coordinate of ${\bf r}_k$ is 
\begin{align*}
-\sum_{i=k}^{n-1}(n-i)h(i)&=-\sum_{i=k}^{n-1}\frac{2n(n-i)}{(n-ip)^3}(1+O(n^{-2})) \\ 
&=-\sum_{i=k}^{n-1}\frac{2(1-i/n)}{(1-p(i/n))^3}\frac{1}{n}(1+O(n^{-2})) \\ 
&= -\int_\alpha^1 \frac{2(1-u)}{(1-pu)^3}\,\mathrm{d}u\,(1+o(1)) \\ 
&= -\frac{(1-\alpha)^2}{(1-p)(1-p\alpha)^2}(1+o(1)), 
\end{align*}
This shows that ${\bf r}_k$, which is on the boundary of the region $\mathring{\bf R}^{(p)}_{\gamma^{(n)}}$, is within an asymptotically vanishing distance of the point $(\frac{\alpha^2}{(1-p\alpha)^2},-\frac{(1-\alpha)^2}{(1-p)(1-p\alpha)^2})$, which lies on the curve 
\begin{equation}\label{eq:curve}
\{(x,y)\in\mathbb R^2:\sqrt{(1-p)x}+\sqrt{-y}=(1-p)^{-1/2}\}.
\end{equation} 
It follows that as $n\to\infty$, the regions $\mathring{\bf R}^{(p)}_{\gamma^{(n)}}$ converge (in the Hausdorff distance) to the region $\mathring{\bf R}_\infty^{(p)}$ bounded by the coordinate axes and the curve in \eqref{eq:curve}. The limit region ${\bf R}_\infty^{(p)}$ is then obtained by rescaling $\mathring{\bf R}_\infty^{(p)}$ to have area $1$. 
\end{proof}

\section{The Stoned Inhomogeneous TASEP}\label{sec:inhomogeneous} 

\subsection{Set-up}
We continue to assume that $W=\SSS_n$ and $\widetilde W=\affS_n$. We keep the same conventions regarding the action of $\SSS_n$ on $n$-tuples and polynomials as in \cref{sec:ring}. As before, let $c$ be the permutation with cycle decomposition $(1\,\,2\,\cdots\,n)$. 

In this section, we fix a parameter $a_k\in(0,1)$ for each nonnegative integer $k$. The \dfn{inhomogeneous TASEP} is the discrete-time Markov chain $\iTASEP_\lambda$ with state space $S_\lambda$ in which the transition probability from a state $\mu$ to a state $\mu'$ is given by 
\[\mathbb P(\mu\to\mu')=\begin{cases} \frac{1}{n}a_{\mu_i} & \mbox{if }\mu'=\overline{s}_i\mu\text{ and }\mu_i>\mu_{i+1}; \\  1-\sum_{\nu\in S_\lambda\setminus\{\mu\}}\mathbb P(\mu\to\nu) & \mbox{if }\mu=\mu'; \\ 0 & \mbox{otherwise}.  \end{cases}
\]
\cref{fig:iTASEP} illustrates some transitions in $\iTASEP_{(1,2,3,4,5,6,7,8)}$. 

Lam and Williams \cite{LamWilliams} introduced the inhomogeneous TASEP and posed several intriguing conjectures about it, including one stating that the stationary probabilities of $\iTASEP_{(1,2,\ldots,n)}$ can be expressed (up to a normalization factor) as nonnegative integral sums of Schubert polynomials in the parameters $a_1,\ldots,a_n$.
These conjectures have inspired a flurry of work on the inhomogeneous TASEP \cite{AasLinusson, AasSjostrand, AritaMallick, AyyerLinusson2, Cantini, KimWilliams}.  (Cantini \cite{Cantini} also introduced a more general version of the inhomogeneous TASEP that we will not consider here.) 

\begin{figure}[ht]
\begin{center}{\includegraphics[height=6.691cm]{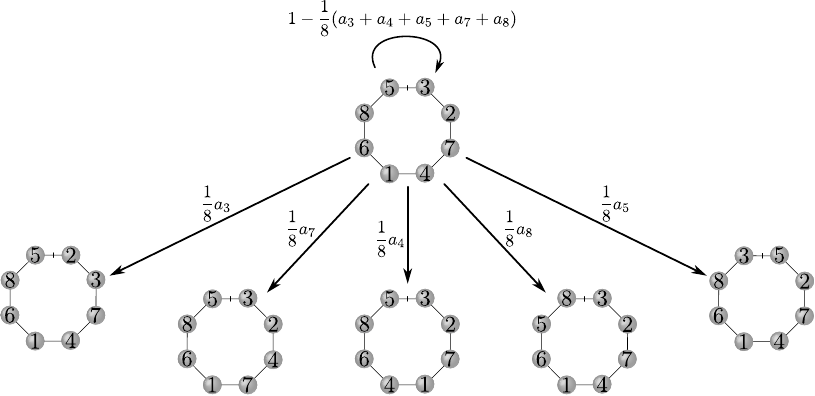}}
\end{center}
\caption{All transitions from the state $(3,2,7,4,1,6,8,5)$ in the Markov chain $\iTASEP_{(1,2,3,4,5,6,7,8)}$. In each depiction of the ring, a tick-mark is drawn between sites $8$ and $1$.} 
\label{fig:iTASEP}
\end{figure} 

In \cite{Cantini}, Cantini introduced a certain family $(\Psi_\mu({\bf x}))_{\mu\in S_\lambda}$ of polynomials in $x_1,\ldots,x_n$ whose coefficients are rational functions in the parameters $a_{\lambda_1},\ldots,a_{\lambda_n}$. These polynomials, which we call \dfn{inhomogeneous TASEP polynomials}, are uniquely determined up to simultaneous multiplication by a symmetric polynomial by the following \dfn{exchange relations}: 
\begin{alignat}{2}
\label{eq:psi1}\frac{x_i(x_{i+1}-a_{\mu_{i+1}})}{a_{\mu_{i+1}}(x_i-x_{i+1})}(1-\overline s_i)\Psi_\mu({\bf x})&=\Psi_{\overline s_i\mu}({\bf x})\quad&&\text{if }i\in\ZZ/n\ZZ\text{ and }\mu_i<\mu_{i+1}; \\
\label{eq:psi2}\overline s_i\Psi_\mu({\bf x})&=\Psi_\mu({\bf x})\quad&&\text{if }i\in\ZZ/n\ZZ\text{ and }\mu_i=\mu_{i+1}; \\ 
\label{eq:psi3}c\Psi_{c\mu}({\bf x})&=\Psi_{\mu}({\bf x}).
\end{alignat} 
Let us fix a normalization of the polynomials $\Psi_\mu({\bf x})$ and define \[\Pi_\lambda({\bf x})=\sum_{\mu\in S_\lambda}\Psi_\mu({\bf x}).\] It is straightforward to check that $(1-\overline s_i)(\Psi_\mu({\bf x})+\Psi_{\overline s_i\mu}({\bf x}))=0$ for all $\mu\in S_\lambda$ and $i\in\ZZ/n\ZZ$. It follows that $\Pi_\lambda({\bf x})$ is symmetric in the variables $x_1,\ldots,x_n$. In \cite{Cantini}, Cantini showed that inhomogeneous TASEP polynomials provide the stationary distribution of $\iTASEP_\lambda$ when $x_1,\ldots,x_n$ ``tend to $\infty$;'' more precisely, the stationary probability of a state $\mu$ in $\iTASEP_\lambda$ is \[\lim_{R\to\infty}\frac{\Psi_\mu(R,\ldots,R)}{\Pi_\lambda(R,\ldots,R)}.\] 
Building off of previous work of Arita and Mallick \cite{AritaMallick} and of Ayyer and Linusson \cite{AyyerLinusson2}, Kim and Williams \cite{KimWilliams} found combinatorial formulas for the polynomials $\Psi_\mu({\bf x})$ in terms of multiline queues. 

The stoned version of $\iTASEP_\lambda$ that we will introduce in this section has a stationary distribution determined by the family $(\Psi_\mu({\bf x}))_{\mu\in S_\lambda}$ with generic choices for all but one of the variables in the tuple ${\bf x}$ (at least one variable must be sent to $\infty$).  

As in \cref{sec:ring}, we consider the auxiliary TASEP with stones $\s_1,\ldots,\s_n$. In order to simplify our approach, we will assume that there are exactly $m=2$ distinct stone densities. As before, let $\MM(\sigma)$ be the number of indices $i\in\ZZ/n\ZZ$ such that $\rrho(\s_{\sigma^{-1}(i)})<\rrho(\s_{\sigma^{-1}(i+1)})$. The transition probabilities in the auxiliary TASEP are again given by \eqref{eq:aux_transitions}.  
One advantage of assuming that there are only $2$ different stone densities is that the stationary distribution of the auxiliary TASEP is the uniform distribution on $\Omm$. In other words,  the quantity $\omega(\sigma)$ in \eqref{eq:omega_circ} does not depend on $\sigma$. 

For each $j\in[n]$ with $\rrho(\s_j)=1$, let us fix a probability 
$p(j)\in[0,1)$. We will assume that there exists $j\in[n]$ such that $\rrho(\s_j)=1$ and $p(j)>0$. Let $R>1$ be a real parameter (which we will imagine is tending to $\infty$). Let $\ttt=(\chi_1,\ldots,\chi_n)$, where 
\[\chi_j=\begin{cases} 1/p(j) & \mbox{if }\rrho(\s_j)=1\text{ and }p(j)>0; \\   R & \mbox{otherwise}.\end{cases}\]

\begin{definition}\label{def:stonediTASEP}
The \dfn{stoned inhomogeneous TASEP}, denoted by $\blacktriangle\iTASEP_\lambda$, is the discrete-time Markov chain with state space $S_\lambda\times\Omm$ whose transition probabilities are as follows: 
\begin{itemize}
\item If $\rrho(\s_{\sigma^{-1}(i)})<\rrho(\s_{\sigma^{-1}(i+1)})$ and $\mu_i>\mu_{i+1}$, then \[\mathbb P((\mu,\sigma)\to (\overline{s}_i\mu,\overline{s}_i\sigma))=\frac{1}{n}p(\sigma^{-1}(i))a_{\mu_{i}}\] and \[\mathbb P((\mu,\sigma)\to (\mu,\overline{s}_i\sigma))=\frac{1}{n}[1-p(\sigma^{-1}(i))a_{\mu_i}].\]
\item If $\rrho(\s_{\sigma^{-1}(i)})<\rrho(\s_{\sigma^{-1}(i+1)})$ and $\mu_i\leq\mu_{i+1}$, then \[\mathbb P((\mu,\sigma)\to (\mu,\overline{s}_i\sigma))=\frac{1}{n}.\]  
\item We have $\mathbb P((\mu,\sigma)\to(\mu,\sigma))=1-\frac{\MM(\sigma)}{n}$. 
\item All other transition probabilities are $0$. 
\end{itemize}
\end{definition}
\cref{fig:D} illustrates \cref{def:stonediTASEP}. 

\begin{figure}[ht]
\begin{center}{\includegraphics[height=7.817cm]{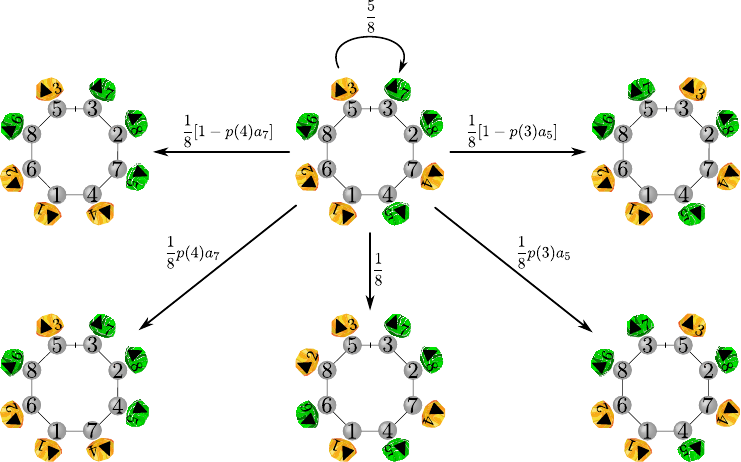}}
\end{center}
\caption{All of the transitions from the state $((3,2,7,4,1,6,8,5),56834712)$ of the Markov chain $\blacktriangle\iTASEP_{(1,2,3,4,5,6,7,8)}$ with $\rrho(\s_1)=\rrho(\s_2)=\rrho(\s_3)=\rrho(\s_4)=1$ and $\rrho(\s_5)=\rrho(\s_6)=\rrho(\s_7)=\rrho(\s_8)=2$.}
\label{fig:D}
\end{figure}

As with the Markov chain $\blacktriangle\AmASEP_\lambda$ from \cref{def:stonedmASEP}, there is an intuitive description of the dynamics of $\blacktriangle\iTASEP_\lambda$. The stones move according to the auxiliary TASEP. If $\blacktriangle\iTASEP_\lambda$ is in a state $(\mu,\sigma)$ and two stones $\s_j$ and $\s_{j'}$ with $j<j'$ swap places (which implies that $\sigma(j')\equiv\sigma(j)+1\pmod{n}$), then these stones send a signal to the particles on sites $\sigma(j)$ and $\sigma(j')$ telling them to swap places. The signal only has probability $p(j)$ of reaching the particles. If the signal does not reach the particles or if $\mu_{\sigma(j)}\leq\mu_{\sigma(j')}$, then the particles do not move. On the other hand, if $\mu_{\sigma(j)}>\mu_{\sigma(j')}$ and the particles receive the signal, then they decide with probability $a_{\mu_{\sigma(j)}}$ to actually swap places (and with probability $1-a_{\mu_{\sigma(j)}}$, they ignore the signal and do not move).  

\subsection{Stationary Distribution} 
Because the probabilities $p(j)$ (for $j\in[n]$ with $\rrho(\s_j)=1$) are strictly less than $1$ and are not all $0$, the Markov chain $\blacktriangle\iTASEP_\lambda$ is irreducible and, hence, has a unique stationary distribution. Our main theorem in this section is the following. 

\begin{theorem}\label{thm:main_iTASEP}
The stationary probability measure $\pi^{\ii}$ of $\blacktriangle\iTASEP_\lambda$ is given by  \[\pi^{\ii}(\mu,\sigma)=\frac{1}{|\Omm|}\lim_{R\to\infty}\frac{\Psi_\mu(\sigma\ttt)}{\Pi_\lambda(\ttt)}.\]
\end{theorem}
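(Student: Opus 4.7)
The plan is to mirror the structure of the proof of \cref{thm:main_ASEP}. The first step is to establish a rewriting identity playing the role of \cref{lem:rewriting}. Combining \eqref{eq:psi1} with the observation (noted after \eqref{eq:psi3}) that $(1-\overline{s}_i)(\Psi_\nu+\Psi_{\overline{s}_i\nu})=0$, one obtains, for $\nu_i<\nu_{i+1}$,
\[\Psi_\nu({\bf x}) = \overline{s}_i \Psi_\nu({\bf x}) + \frac{a_{\nu_{i+1}}(x_i - x_{i+1})}{x_{i+1}(x_i - a_{\nu_{i+1}})}\, \overline{s}_i \Psi_{\overline{s}_i\nu}({\bf x}),\]
and, for $\nu_i>\nu_{i+1}$,
\[\Psi_\nu({\bf x}) = \left(1 - \frac{a_{\nu_i}(x_i - x_{i+1})}{x_{i+1}(x_i - a_{\nu_i})}\right) \overline{s}_i \Psi_\nu({\bf x}).\]
The case $\nu_i=\nu_{i+1}$ reduces to \eqref{eq:psi2}.

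Next I would prove a local balance lemma paralleling \cref{lem:main_ASEP}: for $\sigma,\widehat\sigma\in\Omm$ with $\mathbb P(\widehat\sigma\to\sigma)>0$ in the auxiliary TASEP and for each $\mu\in S_\lambda$,
\[\sum_{\widehat\mu\in S_\lambda} \mathbb P((\widehat\mu,\widehat\sigma)\to(\mu,\sigma))\, \Psi_{\widehat\mu}(\widehat\sigma\ttt) = \mathbb P(\widehat\sigma\to\sigma)\, \Psi_\mu(\sigma\ttt)\]
in the limit as $R\to\infty$. The case $\widehat\sigma=\sigma$ is immediate from $\mathbb P((\mu,\sigma)\to(\mu,\sigma))=1-\MM(\sigma)/n$. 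When $\widehat\sigma=\overline{s}_i\sigma$ for the unique $i$ with $\rrho(\s_{\sigma^{-1}(i+1)})<\rrho(\s_{\sigma^{-1}(i)})$, one case-splits on the comparison of $\mu_i$ and $\mu_{i+1}$, inspects which transitions $(\widehat\mu,\widehat\sigma)\to(\mu,\sigma)$ have positive probability, and applies the appropriate rewriting identity. The crucial evaluation uses $(\sigma\ttt)_i=\chi_{\sigma^{-1}(i)}=R$ and $(\sigma\ttt)_{i+1}=\chi_{\sigma^{-1}(i+1)}$ (equal to $1/p(\sigma^{-1}(i+1))$ when positive, otherwise $R$); substituting into the coefficient $\frac{a_k(x_i-x_{i+1})}{x_{i+1}(x_i-a_k)}$ and letting $R\to\infty$ yields exactly $p(\sigma^{-1}(i+1))\,a_k$, matching the transition probabilities in \cref{def:stonediTASEP}.

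Summing the local balance identity over $\widehat\sigma\in\Omm$, dividing by $|\Omm|\,\Pi_\lambda(\ttt)$, and taking $R\to\infty$ gives the global balance equation at $(\mu,\sigma)$. Because $m=2$, the uniform measure on $\Omm$ is stationary for the auxiliary TASEP, which yields $\sum_{\widehat\sigma}\mathbb P(\widehat\sigma\to\sigma)=1$ for every $\sigma$; and $\Pi_\lambda(\sigma\ttt)=\Pi_\lambda(\ttt)$ by the symmetry of $\Pi_\lambda$ noted before the theorem. Normalization of $\pi^{\ii}$ follows from $\sum_\mu \Psi_\mu=\Pi_\lambda$.

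The main obstacle is handling the $R\to\infty$ limit rigorously. One must verify that the ratio $\Psi_\mu(\sigma\ttt)/\Pi_\lambda(\ttt)$ has a well-defined finite limit --- an extension of Cantini's normalization from the isotropic substitution $x_1=\cdots=x_n=R$ to the present substitution where only a subset of the arguments scales to infinity --- and that this limit commutes with the finite sums in the balance equation (which it does, since only finitely many nonzero terms appear). A minor subtlety occurs in the degenerate regime where some $p(j)=0$: then both $\chi$-values involved scale as $R$, the rewriting coefficient vanishes in the limit, and this matches the corresponding zero transition probability.
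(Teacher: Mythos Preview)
Your proposal is correct and follows the same strategy as the paper: derive rewriting identities from \eqref{eq:psi1}--\eqref{eq:psi3}, establish a local balance relation for each pair $(\widehat\sigma,\sigma)$ with positive auxiliary-TASEP transition probability, and sum over $\widehat\sigma$ using uniformity of the auxiliary stationary measure (for $m=2$). The paper resolves the limit obstacle you flag by stating the local balance lemma (\cref{lem:main_iTASEP}) as an exact identity with explicit multiplicative error factors $1+g_{\widehat\mu}^{\widehat\sigma}(R)$ satisfying $g_{\widehat\mu}^{\widehat\sigma}(R)=O(1/R)$, then bounding the defect in the global balance equation by $\Pi_\lambda(\ttt)\cdot O(1/R)$, which vanishes after dividing by $\Pi_\lambda(\ttt)$ and sending $R\to\infty$.
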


\begin{example}\label{exam:1stone_iTASEP}
Suppose $n=6$, $\rrho(\s_1)=\cdots=\rrho(\s_{5})=1$, and $\rrho(\s_6)=2$. Then $|\Omm|=n(n-1)=30$. The dynamics of the auxiliary TASEP are simple: at each step, either none of the stones move or the stone $\s_6$ swaps with the stone to its left. We can choose the probabilities $p(1),\ldots,p(5)\in[0,1)$ arbitrarily so long as they are not all $0$. Say we take $p(1)=p(4)=0$, $p(2)=p(3)=1/3$, and $p(5)=1/7$. Let $\sigma\in\Omm_{\rrho}$ be the permutation whose one-line notation is $612453$. Then \cref{thm:main_iTASEP} tells us that the stationary probability of a state $(\mu,\sigma)$ is \[\frac{1}{30}\lim_{R\to\infty}\frac{\Psi_\mu(\sigma\ttt)}{\Pi_\lambda(\ttt)}=\frac{1}{30}\lim_{R\to\infty}\frac{\Psi_\mu(3,3,R,R,7,R)}{\Pi_\lambda(R,3,3,R,7,R)}.\] 
\end{example} 

\begin{remark}
For $(\mu,\sigma)\in S_\lambda\times\Omm$, note that $\Psi_\mu(\sigma\ttt)$ and $\Pi_\lambda(\ttt)$ are polynomials in $R$ and that the limit in \cref{thm:main_iTASEP} is simply the quotient of their leading coefficients.  
\end{remark}

\begin{remark}\label{rem:close_to_0_i}
If we choose the probabilities $p(j)$ to be very close to $0$, then $\chi_1,\ldots,\chi_n$ will be very large, so $|\Omm|\pi^\ii(\mu,\sigma)$ will be very close to the stationary probability of $\mu$ in the usual $\iTASEP_\lambda$. As in \cref{rem:close_to_0}, this is because the signals that the stones send to the particles rarely reach the particles. 
\end{remark}

In what follows, we write $\mathbb P$ to denote transition probabilities in both the auxiliary TASEP and $\blacktriangle\iTASEP_\lambda$, relying on context to indicate which is meant.

\begin{lemma}\label{lem:main_iTASEP}
Fix $(\mu,\sigma)\in S_\lambda\times\Omm$. Suppose $\widehat\sigma\in\Omm$ is such that the transition probability $\mathbb P(\widehat\sigma\to\sigma)$ in the auxiliary TASEP is positive. There exists a family $(g_{\widehat\mu}^{\widehat\sigma})_{\widehat\mu\in S_\lambda}$ of functions from $\mathbb R_{>1}$ to $\mathbb R$ such that $g_{\widehat\mu}^{\widehat\sigma}(z)=O(1/z)$ and such that 
\[\sum_{\widehat\mu\in S_\lambda}\mathbb P((\widehat\mu,\widehat\sigma)\to(\mu,\sigma))\Psi_{\widehat\mu}(\widehat\sigma\ttt)(1+g_{\widehat\mu}^{\widehat\sigma}(R))=\Psi_\mu(\sigma\ttt)\mathbb P(\widehat\sigma\to\sigma).\]  
\end{lemma}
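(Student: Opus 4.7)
The plan is to mimic the structure of \cref{lem:main_ASEP} while accounting for the rational-function coefficients appearing in \eqref{eq:psi1}. Unlike the ASEP exchange relations, whose coefficients exactly match the Markov chain transition probabilities, the inhomogeneous TASEP exchange relations produce coefficients that match the transition probabilities only asymptotically as $R \to \infty$; the functions $g_{\widehat\mu}^{\widehat\sigma}$ are placeholders for the resulting $O(1/R)$ discrepancies. The $\widehat\sigma = \sigma$ case is immediate from the third bullet of \cref{def:stonediTASEP}: only $\widehat\mu = \mu$ contributes, and all $g$'s may be taken to be $0$. So assume $\widehat\sigma = \overline s_i \sigma$ for the unique $i \in \ZZ/n\ZZ$ satisfying $\rrho(\s_{\sigma^{-1}(i+1)}) = 1 < 2 = \rrho(\s_{\sigma^{-1}(i)})$, and set $p^* = p(\sigma^{-1}(i+1))$. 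The central observation is that $y_{i+1} := (\widehat\sigma\ttt)_{i+1} = \chi_{\sigma^{-1}(i)} = R$, while $y_i := (\widehat\sigma\ttt)_i = \chi_{\sigma^{-1}(i+1)}$ equals $1/p^*$ if $p^* > 0$ and equals $R$ otherwise; furthermore, $\sigma\ttt$ is obtained from $\widehat\sigma\ttt$ by swapping these two coordinates, so $\Psi_\mu(\sigma\ttt) = (\overline s_i \Psi_\mu)(\widehat\sigma\ttt)$.

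I then split into three sub-cases according to the comparison of $\mu_i$ and $\mu_{i+1}$. If $\mu_i = \mu_{i+1}$, the only transition into $(\mu, \sigma)$ from the fiber over $\widehat\sigma$ has the form $(\mu, \widehat\sigma) \to (\mu, \sigma)$, and \eqref{eq:psi2} yields $\Psi_\mu(\widehat\sigma\ttt) = \Psi_\mu(\sigma\ttt)$ exactly. If $\mu_i < \mu_{i+1}$, both $(\mu, \widehat\sigma)$ and $(\overline s_i \mu, \widehat\sigma)$ can transition into $(\mu, \sigma)$, with respective conditional probabilities $1$ and $p^* a_{\mu_{i+1}}$; applying \eqref{eq:psi1} at ${\bf x} = \widehat\sigma\ttt$ then expresses $\Psi_\mu(\sigma\ttt)$ as a prescribed linear combination of $\Psi_\mu(\widehat\sigma\ttt)$ and $\Psi_{\overline s_i\mu}(\widehat\sigma\ttt)$. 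If $\mu_i > \mu_{i+1}$, only $(\mu, \widehat\sigma)$ contributes (with conditional probability $1 - p^* a_{\mu_i}$), and combining \eqref{eq:psi1} (applied to $\overline s_i \mu$ in place of $\mu$) with the identity $(1 - \overline s_i)(\Psi_\mu + \Psi_{\overline s_i \mu}) = 0$ (a formal consequence of \eqref{eq:psi1}) yields $\Psi_\mu(\sigma\ttt) = \frac{y_{i+1}(y_i - a_{\mu_i})}{y_i(y_{i+1} - a_{\mu_i})}\Psi_\mu(\widehat\sigma\ttt)$.

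Substituting $y_{i+1} = R$ and $y_i = 1/p^*$ reveals that each coefficient on the right-hand side of the identities above equals the corresponding conditional transition probability multiplied by a factor of the form $R/(R - a)$ or $(R - 1/p^*)/(R - a)$ with $a > 0$; both such factors equal $1 + O(1/R)$, and the resulting error functions define the $g_{\widehat\mu}^{\widehat\sigma}$'s. The main obstacle is the boundary case $p^* = 0$: here the denominator $y_i - y_{i+1}$ in \eqref{eq:psi1} vanishes, so the relations above cannot be applied directly. Fortunately, in this case $\mathbb P((\overline s_i \mu, \widehat\sigma) \to (\mu, \sigma)) = 0$, and $\widehat\sigma\ttt = \sigma\ttt$ as $n$-tuples (the two values being swapped both equal $R$), so $\Psi_\mu(\widehat\sigma\ttt) = \Psi_\mu(\sigma\ttt)$ trivially and the identity holds with all $g$'s equal to $0$.
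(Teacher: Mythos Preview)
Your argument is correct and follows the same approach as the paper (case-split on $\widehat\sigma$ versus $\sigma$, then on the relative order of $\mu_i$ and $\mu_{i+1}$, applying \eqref{eq:psi1}--\eqref{eq:psi2} at ${\bf x}=\widehat\sigma\ttt$ and treating $p^*=0$ separately). Your three-way split is in fact more careful than the paper's two-way split: the paper folds $\mu_i>\mu_{i+1}$ into its Case~1 and appeals to \eqref{eq:psi2} there, whereas you correctly derive $\Psi_\mu(\sigma\ttt)=\frac{y_{i+1}(y_i-a_{\mu_i})}{y_i(y_{i+1}-a_{\mu_i})}\Psi_\mu(\widehat\sigma\ttt)$ from \eqref{eq:psi1} together with $(1-\overline s_i)(\Psi_\mu+\Psi_{\overline s_i\mu})=0$, yielding the nontrivial correction $g_\mu^{\widehat\sigma}(R)=a_{\mu_i}/(R-a_{\mu_i})$ in that sub-case.
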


\begin{proof}
If $\widehat\sigma=\sigma$, then we can simply take $g_{\widehat\mu}^{\widehat\sigma}(z)=0$ for all $\widehat\mu\in S_\lambda$. In this case, the desired result is immediate from \cref{def:stonediTASEP}, which tells us that \[\mathbb P((\mu,\sigma)\to(\mu,\sigma))=1-\frac{\MM(\sigma)}{n}=\mathbb P(\sigma\to\sigma)\] and that ${\mathbb P((\widehat\mu,\sigma)\to(\mu,\sigma))=0}$ for all $\widehat\mu\in S_\lambda\setminus\{\mu\}$. 

Now assume $\widehat\sigma\neq\sigma$. Then $\mathbb P(\widehat\sigma\to\sigma)=\frac{1}{n}$, and there exists $i\in\ZZ/n\ZZ$ such that $\sigma=\overline{s}_i\widehat\sigma$ and $\rrho(\s_{\sigma^{-1}(i)})=2>1=\rrho(\s_{\sigma^{-1}(i+1)})$. We have $\mathbb P((\widehat\mu,\overline{s}_i\sigma)\to(\mu,\sigma))=0$ for all $\widehat\mu\in S_\lambda\setminus\{\mu,\overline{s}_i\mu\}$. We consider two cases. 

\medskip 

\noindent {\bf Case 1.} Assume $\mu_i\geq\mu_{i+1}$. In this case, we have \[\mathbb P((\mu,\overline{s}_i\sigma)\to(\mu,\sigma))=\frac{1}{n}=\mathbb P(\overline{s}_i\sigma\to\sigma)\quad\text{and}\quad\mathbb P((\overline{s}_i\mu,\overline{s}_i\sigma)\to(\mu,\sigma))=0.\] We know by \eqref{eq:psi2} that $\Psi_\mu=\overline{s}_i\Psi_\mu$, so 
\begin{align*}
\sum_{\widehat\mu\in S_\lambda}\mathbb P((\widehat\mu,\widehat\sigma)\to(\mu,\sigma))\Psi_{\widehat\mu}(\widehat\sigma\ttt)&=\mathbb P((\mu,\overline{s}_i\sigma)\to(\mu,\sigma))\Psi_\mu(\overline{s}_i\sigma\ttt) \\  &=\mathbb P((\mu,\overline{s}_i\sigma)\to(\mu,\sigma))(\overline{s}_i\Psi_\mu)(\sigma\ttt) \\ 
&=\Psi_\mu(\sigma\ttt)\mathbb P(\widehat\sigma\to\sigma). 
\end{align*} 
In this case, we can once again take $g_{\widehat\mu}^{\widehat\sigma}(z)=0$ for all $\widehat\mu\in S_\lambda$. 

\medskip 

\noindent {\bf Case 2.} Assume $\mu_i<\mu_{i+1}$. Then \[\mathbb P((\mu,\overline{s}_i\sigma)\to(\mu,\sigma))=\frac{1}{n},\] and 
\[\mathbb P((\overline{s}_i\mu,\overline{s}_i\sigma)\to(\mu,\sigma))=\frac{1}{n}p(\sigma^{-1}(i+1))a_{\mu_{i+1}}.\] 

Suppose first that $p(\sigma^{-1}(i+1))=0$. Then $\chi_{\sigma^{-1}(i)}=\chi_{\sigma^{-1}(i+1)}=R$, so $\sigma\ttt=\overline{s}_i\sigma\ttt$. In this case,
\[\sum_{\widehat\mu\in S_\lambda}\mathbb P((\widehat\mu,\widehat\sigma)\to(\mu,\sigma))\Psi_{\widehat\mu}(\widehat\sigma\ttt)=\frac{1}{n}\Psi_\mu(\overline{s}_i\sigma\ttt)=\frac{1}{n}\Psi_\mu(\sigma\ttt)=\Psi_\mu(\sigma\ttt)\mathbb P(\widehat\sigma\to\sigma),\] 
so we can again take $g_{\widehat\mu}^{\widehat\sigma}(z)=0$ for all $\widehat\mu\in S_\lambda$. 

Now suppose that $p(\sigma^{-1}(i+1))>0$ so that $\chi_{\sigma^{-1}(i+1)}=1/p(\sigma^{-1}(i+1))$. If we set ${\bf x}=\overline{s}_i\sigma\ttt$ in \eqref{eq:psi1}, then we find that 
\begin{align*}
\Psi_\mu(\sigma\ttt)&=\Psi_\mu(\overline{s}_i\sigma\ttt)+\frac{a_{\mu_{i+1}}(\chi_{\sigma^{-1}(i)}-\chi_{\sigma^{-1}(i+1)})}{\chi_{\sigma^{-1}(i+1)}(\chi_{\sigma^{-1}(i)}-a_{\mu_{i+1}})}\Psi_{\overline{s}_i\mu}(\overline{s}_i\sigma\ttt) \\ 
&=\Psi_\mu(\overline{s}_i\sigma\ttt)+a_{\mu_{i+1}}p(\sigma^{-1}(i+1))\frac{R-1/p(\sigma^{-1}(i+1))}{R-a_{\mu_{i+1}}}\Psi_{\overline{s}_i\mu}(\overline{s}_i\sigma\ttt).
\end{align*}
In this case, we take \[g_{\overline{s}_i\mu}^{\widehat\sigma}(z)=\frac{z-1/p(\sigma^{-1}(i+1))}{z-a_{\mu_{i+1}}}-1\] and take $g_{\widehat\mu}^{\widehat\sigma}(z)=0$ for all $\widehat\mu\in S_\lambda\setminus\{\overline{s}_i\mu\}$. (Note that $g_{\overline{s}_i\mu}^{\widehat\sigma}(z)$ is well defined for $z>1$ by our assumption that the parameters $a_k$ are all in $(0,1)$.) 
\end{proof}

\begin{proof}[Proof of \cref{thm:main_iTASEP}]
Fix a state $(\mu,\sigma)\in S_\lambda\times\Omm$. We will prove that the desired stationary distribution in the statement of the theorem satisfies the Markov chain balance equation at $(\mu,\sigma)$. This amounts to showing that 
\begin{equation}\label{eq:desired_iTASEP}
\lim_{R\to\infty}\frac{1}{\Pi_\lambda(\ttt)}\left(\Psi_{\mu}(\sigma\ttt)-\sum_{(\widehat\mu,\widehat\sigma)\in S_\lambda\times\Omm}\mathbb P((\widehat\mu,\widehat\sigma)\to(\mu,\sigma))\Psi_{\widehat\mu}(\widehat\sigma\ttt)\right)=0.
\end{equation}

Let $\text{In}(\sigma)$ denote the set of $\widehat\sigma\in\Omm$ such that the transition probability $\mathbb P(\widehat\sigma\to\sigma)$ in the auxiliary TASEP is positive. Let \[{\bf g}(R)=\max_{\widehat\sigma\in\text{In}(\sigma)}\max_{\widehat\mu\in S_{\lambda}}\left|g_{\widehat\mu}^{\widehat\sigma}(R)\right|,\] where $g_{\widehat\mu}^{\widehat\sigma}$ is as in \cref{lem:main_iTASEP}. Then ${\bf g}(R)=O(1/R)$. We know that \[\sum_{\widehat\sigma\in\Omm}\mathbb P(\widehat\sigma\to\sigma)=1\] because the stationary distribution for the auxiliary TASEP is the uniform distribution on $\Omm$. Also, $\mathbb P((\widehat\mu,\widehat\sigma)\to(\mu,\sigma))=0$ whenever $\widehat\sigma\not\in\text{In}(\sigma)$. Hence, it follows from \cref{lem:main_iTASEP} that 
\begin{align*}
&\hphantom{=}\hspace{0.155cm}\left\lvert\Psi_{\mu}(\sigma\ttt)-\sum_{(\widehat\mu,\widehat\sigma)\in S_\lambda\times\Omm}\mathbb P((\widehat\mu,\widehat\sigma)\to(\mu,\sigma))\Psi_{\widehat\mu}(\widehat\sigma\ttt)\right\rvert \\ &=\left\lvert\sum_{\widehat\sigma\in\text{In}(\sigma)}\left(\Psi_{\mu}(\sigma\ttt)\mathbb P(\widehat\sigma\to\sigma)-\sum_{\widehat\mu\in S_\lambda}\mathbb P((\widehat\mu,\widehat\sigma)\to(\mu,\sigma))\Psi_{\widehat\mu}(\widehat\sigma\ttt)\right)\right\rvert \\ &=\left\lvert\sum_{\widehat\sigma\in\text{In}(\sigma)}g_{\widehat\mu}^{\widehat\sigma}(R)\left(\sum_{\widehat\mu\in S_\lambda}\mathbb P((\widehat\mu,\widehat\sigma)\to(\mu,\sigma))\Psi_{\widehat\mu}(\widehat\sigma\ttt)\right)\right\rvert \\ 
&\leq {\bf g}(R)\sum_{\widehat\sigma\in\text{In}(\sigma)}\sum_{\widehat\mu\in S_\lambda}\Psi_{\widehat\mu}(\widehat\sigma\ttt) \\ 
&={\bf g}(R)|\text{In}(\sigma)|\Pi_\lambda(\ttt) \\ 
&=\Pi_\lambda(\ttt)O(1/R). 
\end{align*}
This implies \eqref{eq:desired_iTASEP}. 
\end{proof} 

\subsection{Random Combinatorial Billiards}\label{subsec:billiards2} 
Assume in this subsection that $\lambda=(1,2,\ldots,n)$, and use the map $w\mapsto(w^{-1}(1),\ldots,w^{-1}(n))$ to identify $\SSS_n$ with $S_\lambda$. Assume that $\rrho(\s_1)=1$ and ${\rrho(\s_2)=\cdots=\rrho(\s_n)=2}$. Fix a probability $p=p(1)\in(0,1)$. Let $\ttt=(1/p,R,\ldots,R)$ be the $n$-tuple whose first entry is $1/p$ and whose other entries are all equal to $R$. 
According to \cref{thm:main_iTASEP}, the stationary probability of a state $(w,\sigma)$ of $\iTASEP_\lambda$ is \[\pi^{\ii}(w,\sigma)=\frac{1}{n(n-1)}\lim_{R\to\infty}\frac{\Psi_w(\sigma\ttt)}{\Pi_\lambda(\ttt)}.\] 

Consider the following random combinatorial billiard trajectory. Start at a point $z_0$ in the interior of the alcove $\BB$, and shine a beam of light in the direction of the vector ${\ddd^{(n)}=-ne_n+\sum_{j\in[n]}e_j}$. If at some point in time the beam of light is traveling in an alcove $\BB u$ and hits the hyperplane $\HH^{(u,s_i)}$, then it passes through with probability $p\,\hh_0(\overline{u}^{-1}(i),\overline{u}^{-1}(i+1))a_{\overline{u}^{-1}(i)}$, and it reflects with probability $1-p\,\hh_0(\overline{u}^{-1}(i),\overline{u}^{-1}(i+1))a_{\overline{u}^{-1}(i)}$. We can discretize this billiard trajectory as follows: if the beam of light is in the alcove $\BB u$ and it is headed toward the facet of $\BB u$ contained in $\HH^{(u,s_i)}$, then we record the pair $(u,i)\in\affS_n\times\ZZ/n\ZZ$. Applying the quotient map $\affS_n\times\ZZ/n\ZZ\to\SSS_n\times\ZZ/n\ZZ$ defined by $(u,i)\mapsto(\overline u,i)$, we obtain a Markov chain ${\bf M}^{\ii}_{{\ddd^{(n)}}}$ with state space $\SSS_n\times\ZZ/n\ZZ$, which models a certain random combinatorial billiard trajectory in the $(n-1)$-dimensional torus $\mathbb T=V^*/Q^\vee$. This random toric combinatorial billiard trajectory is similar to the Markov chain ${\bf M}_{\ddd^{(n)}}$ discussed in \cref{subsec:intro_billiards,subsec:reducedA}, except now the probability that the beam of light passes through a toric hyperplane $\qqq(\HH_{i,j}^k)$ (with $1\leq i<j\leq n$) depends on both the side of $\qqq(\HH_{i,j}^k)$ that it hits and the parameter $a_j$. Let $\zeta_{{\ddd^{(n)}}}^{\ii}$ denote the stationary distribution of ${\bf M}_{{\ddd^{(n)}}}^{\ii}$. 

In \cref{subsec:billiards1}, we saw that the stationary distribution $\zeta_{\ddd^{(n)}}$ of ${\bf M}_{\ddd^{(n)}}$ is determined by ASEP polynomials. A very similar argument (which we omit) shows that $\zeta_{{\ddd^{(n)}}}^{\ii}$ is determined by inhomogeneous TASEP polynomials. More precisely, \[\zeta_{{\ddd^{(n)}}}^{\ii}(w,M)=\frac{1}{n}\lim_{R\to\infty}\frac{\Psi_w(\ttt^{(M)})}{\Pi_\lambda(\ttt)},\] where $\ttt^{(M)}$ is the $n$-tuple whose $M$-th entry is $1/p$ and whose other entries are all equal to $R$.   

\section{The Stoned Multispecies Open Boundary ASEP}\label{sec:open}

\subsection{Set-up}

In this section, we shift gears and assume that the root system $\Phi$ is of type $C_n$. Thus, $\Phi=\Phi^+\sqcup\Phi^-$, where 
\begin{equation}\label{eq:PhiC}
\Phi^+=\{e_i\pm e_j:1\leq i<j\leq n\}\sqcup\{2e_i:1\leq i\leq n\}\quad\text{and}\quad\Phi^-=-\Phi^+.
\end{equation} 
Here, we have $V=V^*=\mathbb R^n$. The index set $I$ is $[n]$. For $i\in[n-1]$, the simple root $\alpha_i$ is $e_i-e_{i+1}$. Also, $\alpha_n=2e_n$. The coroot lattice is $Q^\vee=\mathbb Z^n$. We identify the Weyl group $W$ with the hyperoctahedral group $C_n$, whose elements are permutations $w$ of the set ${\pm[n]\coloneq\{-n,\ldots,-1,1,\ldots,n\}}$ such that $w(-i)=-w(i)$ for all $i$. Under this identification, each simple reflection $s_i=\overline s_i$ for $i\in[n-1]$ is the element that simultaneously swaps $i$ and $i+1$ and swaps $-i$ and $-(i+1)$. The simple reflection $s_n=\overline s_n$ swaps $n$ and $-n$. Also, $\overline s_0$ swaps $1$ and $-1$. 

Let us fix real parameters $\alpha,\beta\in(0,1)$ in addition to our usual fixed parameter $t\in[0,1)$ and nondecreasing $n$-tuple $\lambda$. Let $S_\lambda^\pm$ denote the set of tuples of the form $(\epsilon_1\mu_1,\ldots,\epsilon_n\mu_n)$, where ${(\mu_1,\ldots,\mu_n)\in S_\lambda}$ and $\epsilon_1,\ldots,\epsilon_n\in\{\pm 1\}$. Suppose $w\in C_n$. For $\mu\in S_\lambda^\pm$, let us write ${w\mu=(\mu_{w^{-1}(1)},\ldots,\mu_{w^{-1}(n)})}$, where $\mu_{-i}=-\mu_i$. Given a tuple $\boldsymbol{y}$ of variables or real numbers, we let $w\boldsymbol{y}=(y_{w^{-1}(1)},\ldots,y_{w^{-1}(n)})$, where $y_{-i}=y_i^{-1}$. (It should be clear from context whether we want to consider a tuple as an element of $S_\lambda^\pm$ or as a tuple of variables or real numbers.)

Let $\mathbb C(\alpha,\beta,t)[{\bf x}^{\pm 1}]$ be the set of Laurent polynomials in the variables $x_1,\ldots,x_n$ whose coefficients are rational functions in $\alpha,\beta,t$. Given a Laurent polynomial ${f=f({\bf x})=f(x_1,\ldots,x_n)}$, we define a Laurent polynomial $wf$ by letting \[wf({\bf x})=f(w{\bf x})=f(x_{w^{-1}(1)},\ldots,x_{w^{-1}(n)}).\]    

Define operators $T_1,\ldots,T_{n-1}$ on $\mathbb C(\alpha,\beta,t)[{\bf x}^{\pm 1}]$ by
\[T_i=t-\frac{tx_i-x_{i+1}}{x_i-x_{i+1}}(1-\overline s_i).\] Let us also define the operators 
\[T_0=1-\frac{1}{\alpha}\frac{(1-x_1^2)\alpha-(1-t)x_1}{1-x_1^2}(1-\overline s_0)\quad\text{and}\quad T_n=1-\frac{1}{\beta}\frac{(1-x_n^2)\beta+(1-t)x_n}{1-x_n^2}(1-\overline s_n).\]
One can check that the following relations hold: 
\begin{alignat*}{2}
T_0^2&=T_n^2=1; && \\
T_0T_1T_0T_1&=T_1T_0T_1T_0; && \\ 
T_nT_{n-1}T_nT_{n-1}&=T_{n-1}T_nT_{n-1}T_n; && \\ 
(T_i-t)(T_i+1)&=0\qquad &&\text{for }1\leq i\leq n-1; \\ 
T_iT_{i+1}T_i&=T_{i+1}T_iT_{i+1} \qquad &&\text{for }1\leq i\leq n-2; \\ 
T_iT_j&=T_jT_i \qquad &&\text{for }0\leq i,j\leq n\text{ with }|i-j|\geq 2. 
\end{alignat*}
(This means that these operators define an action of the \emph{Hecke algebra}\footnote{This Hecke algebra is different from the $0$-Hecke algebra defined in \cref{sec:preliminaries}.} of $\widetilde C_n$.) 

There is a unique family $(G_\mu)_{\mu\in S_\lambda^{\pm}}$ of Laurent polynomials in $\mathbb C(\alpha,\beta,t)[{\bf x}^{\pm 1}]$ such that the coefficient of $x_1^{\lambda_n}x_2^{\lambda_{n-1}}\cdots x_n^{\lambda_1}$ in $G_\lambda$ is $1$ and such that the following \dfn{exchange equations} hold: 
\begin{alignat}{2}
\label{CKZ1} T_0G_\mu&=G_{\overline s_0\mu}; && \\
\label{CKZ2} T_iG_\mu&=G_{\overline{s}_i\mu}\quad&&\text{if }1\leq i\leq n-1\text{ and }\mu_i<\mu_{i+1}; \\ 
\label{CKZ3} \overline s_iG_\mu&=G_{\mu}\quad&&\text{if }1\leq i\leq n-1\text{ and }\mu_i=\mu_{i+1}; \\ 
\label{CKZ4} T_nG_\mu&=G_{\overline{s}_n\mu}. &&  
\end{alignat}
(These equations are obtained by setting $q=1$ in the \emph{$q$KZ equations} from \cite{CGdGW,CMW2,Kasatani}.) 
These Laurent polynomials $G_\mu=G_\mu({\bf x};t)$ are called \dfn{open boundary ASEP polynomials}.\footnote{Our polynomials are actually special instances of the more general polynomials defined in those articles because we set $q=1$.}  The Laurent polynomial \[K_\lambda({\bf x};t)=\sum_{\mu\in S_\lambda}G_\mu({\bf x};t)\] is a \dfn{Koornwinder polynomial} \cite{CGdGW, CMW2}. We remark that $K_\lambda({\bf x};t)$ is symmetric in the variables $x_1,\ldots,x_n$. When all parts of $\lambda$ belong to $\{0,1\}$, Corteel, Mandelshtam, and Williams \cite{CMW2} gave a combinatorial formula for computing open boundary ASEP polynomials using \emph{rhombic staircase tableaux}. 

We let $\CmASEP_\lambda$ denote the \dfn{multispecies open boundary ASEP} whose state space is $S_\lambda^{\pm}$. This is a discrete-time Markov chain in which the transition probability from a state $\mu$ to a state $\mu'$ is 
\[\mathbb P(\mu\to\mu')=\begin{cases} \frac{1}{n+1}\hh_t(\mu_i,\mu_{i+1}) & \mbox{if }1\leq i\leq n-1\text{ and }\mu'=\overline{s}_i\mu\neq\mu; \\ \frac{1}{n+1}\alpha & \mbox{if }\mu'=\overline{s}_0\mu\neq\mu; \\ \frac{1}{n+1}\beta & \mbox{if }\mu'=\overline{s}_n\mu\neq\mu; \\  1-\sum_{\nu\in S_\lambda^\pm\setminus\{\mu\}}\mathbb P(\mu\to\nu) & \mbox{if }\mu=\mu'; \\ 0 & \mbox{otherwise}.  \end{cases}
\]
The state $\mu$ can be visualized as a configuration of particles on a line with sites $1,\ldots,n$ (listed from left to right), where the particle on site $i$ has \dfn{species} $\mu_i$ (see \cref{fig:E}). The stationary distribution of $\CmASEP_\lambda$ has been a topic of vigorous investigation \cite{Cantini2, CFRV, CMRV, CGdGW, CMWAdv, CMW2, CW07, CW18, Uch2, Uch}; in particular, Cantini, Garbali, de Gier, and Wheeler \cite{CGdGW} showed that the stationary probability of a state $\mu\in S_\lambda^{\pm}$ is \[\frac{G_\mu(1,\ldots,1;t)}{K_\lambda(1,\ldots,1;t)}.\]   

\begin{figure}[ht]
\begin{center}{\includegraphics[height=4.160cm]{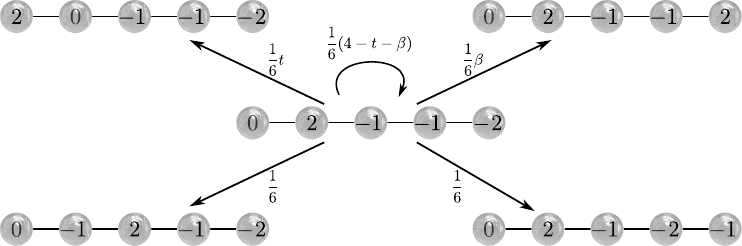}}
\end{center}
\caption{All the transitions from the state $(0,2,-1,-1,-2)$ in $\CmASEP_{(0,1,1,2,2)}$. }
\label{fig:E}
\end{figure}  

Recall the notation $\pm[n]=\{-n,\ldots,-1,1,\ldots,n\}$. We define a bijection ${\iota\colon\pm[n]\to\ZZ/2n\ZZ}$ by 
\begin{equation}\label{eq:iota}
\iota(h)=\begin{cases} h & \mbox{if }1\leq h\leq n; \\ h+1 & \mbox{if }-n\leq h\leq -1.\end{cases}
\end{equation} 
In order to define a stoned variant of $\CmASEP_\lambda$, we need a version of the auxiliary TASEP. We will use a very simple deterministic process that we call the \dfn{auxiliary cyclic shift}. Formally, this system is just a bijection $\aleph\colon\pm[n]\to\pm[n]$ defined by 
\[\aleph(h)=\iota^{-1}(\iota(h)+1).\] To represent a number $h\in\pm[n]$, we place a single stone $\s$ on the site $|h|$ on the line; the stone is \dfn{upright} if $h>0$, and it is \dfn{overturned} if $h<0$. In the auxiliary cyclic shift, the upright stone moves one space to the right at each step until it reaches the right endpoint, where it becomes overturned; the overturned stone then moves one space to the left at each step until it reaches the left endpoint, where it becomes upright again. See \cref{fig:F}.

\begin{figure}[ht]
\begin{center}{\includegraphics[height=3.168cm]{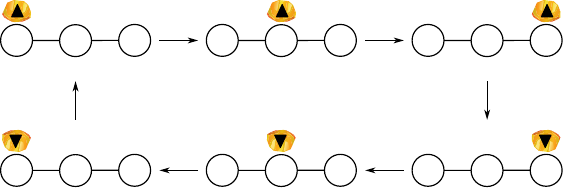}}
\end{center}
\caption{The dynamics of the auxiliary cyclic shift for $n=3$.}
\label{fig:F}
\end{figure}  

Let $\ttt=(\chi_1,\ldots,\chi_{n-1},\chi_{\s})$ be an $n$-tuple of nonzero real numbers. Let 
\begin{equation}\label{eq:p0pn}p_0=\frac{1-\chi_{\s}^2}{(1-t)\chi_{\s}+\alpha(1-\chi_{\s}^2)}\quad\text{and}\quad p_n=\frac{1-\chi_{\s}^2}{(1-t)\chi_{\s}+\beta(1-\chi_{\s}^2)},
\end{equation} and for $1\leq i\leq n-1$, let 
\begin{equation}\label{eq:piupdown}
p^{\uparrow}_i=\frac{\chi_{\s}-\chi_i}{t\chi_{\s}-\chi_i}\quad\text{and}\quad p^{\downarrow}_i=\frac{\chi_{\s}-\chi_i^{-1}}{t\chi_{\s}-\chi_i^{-1}}.
\end{equation} 
We assume that $\chi_1,\ldots,\chi_{n-1},\chi_{\s}$ are chosen so that $p_0,p_n,p^\uparrow_1,\ldots,p^\uparrow_{n-1},p^\downarrow_{1},\ldots,p^\downarrow_{n-1}$ all belong to $(0,1)$. We envision an element of $S_\lambda^\pm\times\pm[n]$ as a configuration of particles on the line with sites $1,\ldots,n$ together with the stone (either upright or overturned) on one of the sites. 
 
\begin{definition}\label{def:stonedobmASEP}
The \dfn{stoned multispecies open boundary ASEP}, which we denote by $\blacktriangle\CmASEP_\lambda$, is the discrete-time Markov chain with state space $S_\lambda\times\pm[n]$ whose transition probabilities are as follows: 
\begin{itemize}
\item If $\mu_1\neq 0$, then $\mathbb P((\mu,-1)\to(\overline{s}_0\mu,1))=p_0\alpha$ and $\mathbb P((\mu,-1)\to(\mu,1))=1-p_0\alpha$. If $\mu_1=0$, then $\mathbb P((\mu,-1)\to(\mu,1))=1$. 
\item If $\mu_n\neq 0$, then $\mathbb P((\mu,n)\to(\overline{s}_n\mu,-n))=p_n\beta$ and $\mathbb P((\mu,n)\to(\mu,-n))=1-p_n\beta$. If $\mu_n=0$, then $\mathbb P((\mu,n)\to(\mu,-n))=1$. 
\item For $1\leq i\leq n-1$, we have \[\mathbb P((\mu,i)\to(\overline{s}_i\mu,i+1))=p_i^\uparrow\hh_t(\mu_i,\mu_{i+1})\quad\text{if }\mu\neq \overline s_i\mu,\] and we have \[\mathbb P((\mu,i)\to(\mu,i+1))=1-p_i^\uparrow\hh_t(\mu_i,\mu_{i+1}).\] 
\item For $1\leq i\leq n-1$, we have \[\mathbb P((\mu,-(i+1))\to(\overline{s}_i\mu,-i))=p_i^\downarrow\hh_t(\mu_i,\mu_{i+1})\quad\text{if }\mu\neq\overline s_i\mu,\] and we have \[\mathbb P((\mu,-(i+1))\to(\mu,-i))=1-p_i^\downarrow\hh_t(\mu_i,\mu_{i+1}).\]  
\item All other transition probabilities are $0$. 
\end{itemize}
\end{definition}

\cref{fig:G} illustrates \cref{def:stonedobmASEP}. 

\begin{figure}[ht]
\begin{center}{\includegraphics[height=10.96cm]{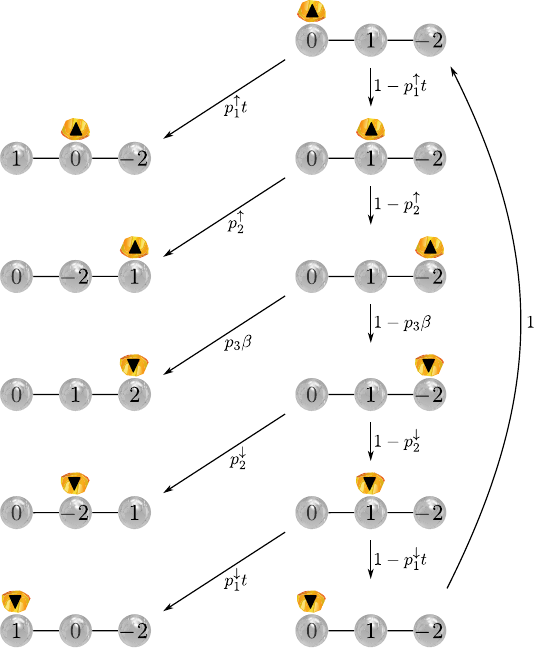}}
\end{center}
\caption{Some of the transitions in $\CmASEP_{(0,1,2)}$.}
\label{fig:G}
\end{figure}  

There is a simple intuitive description of the dynamics of $\blacktriangle\CmASEP_\lambda$. The stone moves deterministically according to the auxiliary cyclic shift. \begin{itemize}
\item When the stone changes from being overturned to being upright (respectively, from being upright to being overturned), it sends a signal to the particle on site $1$ (respectively, site $n$). The signal reaches the particle with probability $p_0$ (respectively, $p_n$). If the particle receives the signal, then with probability $\alpha$ (respectively, $\beta$), it decides to follow orders and change the sign of its species. If the particle does not receive the signal or it decides not to follow orders, then it does nothing. 
\item When the upright (respectively, overturned) stone moves from site $i$ to site $i+1$ (respectively, from site $i+1$ to site $i$), it sends a signal to the particles on sites $i$ and $i+1$ telling them to swap places. Let $\mu_i$ and $\mu_{i+1}$ be the species of the particles on sites $i$ and $i+1$. The particles receive the signal with probability $p_i^\uparrow$ (respectively, $p_i^\downarrow$), and if they receive the signal, then they decide to follow orders and swap places with probability $\hh_t(\mu_i,\mu_{i+1})$. If the particles do not receive the signal or they decide not to follow orders, then they do not move.    
\end{itemize} 

\begin{remark}\label{rem:abcd}
There is a more general version of the multispecies open boundary ASEP in which the two boundary rates $\alpha,\beta$ are replaced by four boundary rates $\alpha,\beta,\gamma,\delta$ (see, e.g., \cite{Cantini2, CGdGW, CMW2}). In this setting, the probability $\mathbb P(\mu\to\overline s_0\mu)$ is $\frac{1}{n+1}\alpha$ if $\mu_1<0$ and is $\frac{1}{n+1}\gamma$ if $\mu_1>0$, while the probability $\mathbb P(\mu\to\overline s_n\mu)$ is $\frac{1}{n+1}\beta$ if $\mu_n>0$ and is $\frac{1}{n+1}\delta$ if $\mu_n<0$. Our derivation of the stationary distribution of the stoned multispecies open boundary ASEP in \cref{thm:main_obASEP} requires us to assume that $\alpha=\gamma$ and $\beta=\delta$. It would be interesting to extend our results to the setting where $\alpha,\beta,\gamma,\delta$ can be distinct. That said, it \emph{is} possible to generalize the results in this section by replacing the auxiliary cyclic shift with an \emph{auxiliary open boundary TASEP} using $n$ stones of various densities, just as we did in \cref{sec:ring}. In this more general setting, the auxiliary TASEP is essentially a copy of a multispecies open boundary TASEP with $\gamma=\delta=0$, and its stationary distribution can be computed using the open boundary ASEP polynomials from \cite{CGdGW,CMW2}. In order to keep our presentation as simple as possible, we have opted to work in the setting where there is only one stone that moves deterministically. This restricted setting is still general enough to exhibit the behavior that we find most interesting.  
\end{remark} 

\subsection{Stationary Distribution} 

The assumption that the probabilities in \eqref{eq:p0pn} and \eqref{eq:piupdown} are strictly between $0$ and $1$ guarantees that $\blacktriangle\CmASEP_\lambda$ is irreducible; we will determine its stationary distribution in terms of open boundary ASEP polynomials. First, we need the following lemma; we omit the proof because it is virtually identical to that of \cref{lem:rewriting}.  

\begin{lemma}\label{lem:rewritingC}
For $\nu\in S_\lambda^\pm$ and $1\leq i\leq n-2$, we have 
\[G_\nu({\bf x};t)=\left(1-\frac{x_{i+1}-x_i}{tx_{i+1}-x_i}\hh_t(\nu_{i},\nu_{i+1})\right)\overline{s}_iG_\nu({\bf x};t)+\frac{x_{i+1}-x_i}{tx_{i+1}-x_i}\hh_t(\nu_{i+1},\nu_{i})\overline{s}_iG_{\overline{s}_i\nu}({\bf x};t).\]
\end{lemma}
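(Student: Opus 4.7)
The plan is to mirror the proof of \cref{lem:rewriting} essentially verbatim, since the operators $T_i$ for $1\leq i\leq n-1$ are defined by the same formula here as in \cref{sec:ring}, and the exchange equations \eqref{CKZ2} and \eqref{CKZ3} are the direct type~$C$ analogues of \eqref{eq:qKZ1} and \eqref{eq:qKZ2}. I would split into three cases according to whether $\nu_i=\nu_{i+1}$, $\nu_i<\nu_{i+1}$, or $\nu_i>\nu_{i+1}$. The restriction $1\leq i\leq n-2$ (or even $1\leq i\leq n-1$) ensures that only the ``bulk'' exchange equations are needed; the boundary relations \eqref{CKZ1} and \eqref{CKZ4} (and hence the parameters $\alpha,\beta$) play no role.

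In the case $\nu_i=\nu_{i+1}$, both $\hh_t(\nu_i,\nu_{i+1})$ and $\hh_t(\nu_{i+1},\nu_i)$ are zero, so the claimed identity collapses to $G_\nu=\overline s_iG_\nu$, which is exactly \eqref{CKZ3}. In the case $\nu_i<\nu_{i+1}$, I would start from \eqref{CKZ2} applied to $\mu=\nu$, substitute the explicit formula for $T_i$, and swap the roles of $x_i$ and $x_{i+1}$ in the resulting equation. Using $\hh_t(\nu_i,\nu_{i+1})=t$ and $\hh_t(\nu_{i+1},\nu_i)=1$ and solving for $G_\nu({\bf x};t)$ should yield the claimed identity directly.

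The case $\nu_i>\nu_{i+1}$ is the only one that requires a small extra input. Here I would apply \eqref{CKZ2} to $\mu=\overline s_i\nu$ (which is valid since $(\overline s_i\nu)_i<(\overline s_i\nu)_{i+1}$), expand $T_i$, and swap $x_i\leftrightarrow x_{i+1}$ to get an equation expressing $\overline s_i G_{\overline s_i\nu}$ as a linear combination involving $\overline s_i G_\nu$ and $G_\nu$. To bring this equation into the form of the claim, I would then use the identity $(1-\overline s_i)(G_\nu+G_{\overline s_i\nu})=0$; this identity follows from \eqref{CKZ2} together with the quadratic relation $(T_i-t)(T_i+1)=0$ and the definition of $T_i$. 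Substituting $G_{\overline s_i\nu}=\overline s_iG_\nu+\overline s_iG_{\overline s_i\nu}-G_\nu$ into the equation and rearranging with $\hh_t(\nu_i,\nu_{i+1})=1$, $\hh_t(\nu_{i+1},\nu_i)=t$ should finish this case.

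The only real obstacle is bookkeeping: keeping the swap of $x_i$ and $x_{i+1}$ and the asymmetry of $\hh_t(\cdot,\cdot)$ consistent through the algebra. Since all three cases translate line for line from the proof of \cref{lem:rewriting} with $F$ replaced by $G$ and \eqref{eq:qKZ1}, \eqref{eq:qKZ2} replaced by \eqref{CKZ2}, \eqref{CKZ3}, the author's decision to omit the proof is justified, and my write-up would largely consist of verifying that none of these manipulations touch an index $i\in\{0,n-1,n\}$ for which the exchange equations take a different form.
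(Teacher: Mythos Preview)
Your proposal is correct and follows exactly the approach the paper intends: it mirrors the proof of \cref{lem:rewriting} case by case, replacing $F_\mu$ by $G_\mu$ and the exchange equations \eqref{eq:qKZ1}, \eqref{eq:qKZ2} by \eqref{CKZ2}, \eqref{CKZ3}. One small correction to your final remark: the bulk exchange equations \eqref{CKZ2} and \eqref{CKZ3} are stated for $1\leq i\leq n-1$, so $i=n-1$ is not a boundary case and the argument works there as well (indeed the paper uses the lemma with $i=n-1$ in Case~3 of the proof of \cref{thm:main_obASEP}).
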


For $k\in[n]$, let 
\[\ttt^{(k)}=(\chi_1,\ldots,\chi_{k-1},\chi_{\s},\chi_k,\ldots,\chi_{n-1})\quad\text{and}\quad\ttt^{(-k)}=(\chi_1,\ldots,\chi_{k-1},\chi_{\s}^{-1},\chi_k,\ldots,\chi_{n-1}).\] 

\begin{theorem}\label{thm:main_obASEP}
The stationary probability measure $\pi^{\ob}$ of $\blacktriangle\CmASEP_\lambda$ is given by \[\pi^{\ob}(\mu,h)=\frac{1}{2n}\frac{G_{\mu}(\ttt^{(h)};t)}{K_\lambda(\ttt;t)}.\]
\end{theorem}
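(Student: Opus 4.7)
The plan is to follow the template of the proof of \cref{thm:main_ASEP} and verify the Markov chain balance equation directly at each state $(\mu,h)$. Because the stone moves deterministically under the auxiliary cyclic shift, every state has a unique predecessor stone position $\aleph^{-1}(h)$, so the balance equation---after clearing the common factor $2n\,K_\lambda(\ttt;t)$---reduces to an identity of the form
\[G_\mu(\ttt^{(h)};t) = \sum_{\hat\mu} \mathbb{P}\bigl((\hat\mu,\aleph^{-1}(h))\to(\mu,h)\bigr)\, G_{\hat\mu}\bigl(\ttt^{(\aleph^{-1}(h))};t\bigr),\]
whose right-hand side has at most two nonzero terms: one for $\hat\mu=\mu$ and one for $\hat\mu$ obtained from $\mu$ by the reflection associated with the current stone move.

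I then split into four cases according to $h$. For $h=1$ (left-boundary flip, predecessor $\hat h=-1$), I apply the exchange relation \eqref{CKZ1}, namely $T_0G_\mu=G_{\overline{s}_0\mu}$, at the argument ${\bf x}=\ttt^{(-1)}$; since $\overline{s}_0\ttt^{(-1)}=\ttt^{(1)}$, this rewrites $G_{\overline{s}_0\mu}(\ttt^{(-1)})$ as a linear combination of $G_\mu(\ttt^{(-1)})$ and $G_\mu(\ttt^{(1)})$, and matching coefficients reduces the verification to the short numerical identity $p_0\alpha\cdot\phi_0(\chi_\s^{-1})=1$, where $\phi_0(x_1)$ is the rational function appearing in the definition of $T_0$; this identity is straightforward from the formula for $p_0$ in \eqref{eq:p0pn}. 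The case $h=-n$ is handled symmetrically using \eqref{CKZ4} and the formula for $p_n$. For the bulk cases $h=i+1$ and $h=-i$ with $1\le i\le n-1$, the predecessors are $i$ and $-(i+1)$ respectively, and in both situations $\ttt^{(\hat h)}=\overline{s}_i\ttt^{(h)}$; applying \cref{lem:rewritingC} with $\nu=\mu$ and ${\bf x}=\ttt^{(h)}$ then produces precisely the desired identity, provided the transition probability equals the coefficient $(x_{i+1}-x_i)/(tx_{i+1}-x_i)$ evaluated at that tuple. A brief manipulation shows that both $(\chi_i-\chi_\s^{-1})/(t\chi_i-\chi_\s^{-1})$ (arising from $\ttt^{(-i)}$) and $(\chi_\s-\chi_i)/(t\chi_\s-\chi_i)$ (arising from $\ttt^{(i+1)}$) reduce to $(\chi_\s\chi_i-1)/(t\chi_\s\chi_i-1)$, matching $p_i^\uparrow$ and $p_i^\downarrow$ as given in \eqref{eq:piupdown}.

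Finally, to confirm that $\pi^{\ob}$ is a probability distribution, I would use $\sum_\mu G_\mu=K_\lambda$ together with the invariance of the Koornwinder polynomial $K_\lambda(\cdot;t)$ under the full hyperoctahedral action on its arguments, which yields $K_\lambda(\ttt^{(h)};t)=K_\lambda(\ttt;t)$ for each of the $2n$ stone positions and thereby accounts for the normalization factor $1/(2n)$. I do not anticipate any deep conceptual obstacle; the bulk of the work is the bookkeeping needed to align each transition probability with the correct coefficient in the relevant exchange relation. The only mild nuisance is the collection of degenerate sub-cases $\mu_1=0$, $\mu_n=0$, and $\mu_i=\mu_{i+1}$, in which the associated reflection fixes $\mu$ and the balance equation collapses to an $\overline{s}_j$-invariance statement for $G_\mu$ that follows immediately from \eqref{CKZ3} or from the degenerate forms of \eqref{CKZ1}/\eqref{CKZ4}.
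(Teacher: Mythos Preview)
Your approach is essentially identical to the paper's: verify the balance equation directly at each state $(\mu,h)$, split into the four cases $h=1$, $h=-n$, $2\le h\le n$, and $-(n-1)\le h\le -1$, and in each case invoke the appropriate exchange relation (\eqref{CKZ1}, \eqref{CKZ4}, or \cref{lem:rewritingC}) at the argument ${\bf x}=\ttt^{(h)}$, using $\ttt^{(\aleph^{-1}(h))}=\overline{s}_j\ttt^{(h)}$ for the relevant reflection $\overline{s}_j$.

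There is one small computational slip in your bulk-case discussion. You assert that both $(\chi_\s-\chi_i)/(t\chi_\s-\chi_i)$ and $(\chi_i-\chi_\s^{-1})/(t\chi_i-\chi_\s^{-1})$ reduce to the common value $(\chi_\s\chi_i-1)/(t\chi_\s\chi_i-1)$. Only the second one does; the first is already $p_i^\uparrow$ as defined in \eqref{eq:piupdown}, while the second equals $p_i^\downarrow$, and in general $p_i^\uparrow\ne p_i^\downarrow$ (they coincide only when $\chi_i=\chi_i^{-1}$, as in the specialization of \cref{subsec:billiards3}). Fortunately this does not damage the argument: what the proof actually requires is that the coefficient $(x_{i+1}-x_i)/(tx_{i+1}-x_i)$ evaluated at $\ttt^{(i+1)}$ equals $p_i^\uparrow$ and evaluated at $\ttt^{(-i)}$ equals $p_i^\downarrow$, separately, and both of these hold. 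Your added normalization check via the $C_n$-invariance of the Koornwinder polynomial is correct and is a detail the paper leaves implicit.
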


\begin{proof}
Fix a state $(\mu,h)\in S_\lambda^\pm\times\pm[n]$. Let $h'=\aleph^{-1}(h)$. We will prove that the desired stationary distribution satisfies the Markov chain balance equation at $(\mu,h)$. This amounts to showing that 
\begin{equation}\label{eq:balance_C}\sum_{(\widehat\mu,\widehat h)\in S_\lambda^\pm\times\pm[n]}\mathbb P((\widehat\mu,\widehat h)\to(\mu,h))G_{\widehat \mu}(\ttt^{(\widehat h)};t)=G_\mu(\ttt^{(h)};t).
\end{equation} 
Note that the sum on the left-hand side of this equation has only two nonzero terms. We consider four cases. 

\medskip 

\noindent {\bf Case 1.} Assume $h=1$. In this case, $h'=-1$, and the balance equation \eqref{eq:balance_C} becomes  
\[\mathbb P((\overline{s}_{0}\mu,-1)\to(\mu,1))G_{\overline{s}_{0}\mu}(\ttt^{(-1)};t)+\mathbb P((\mu,-1)\to(\mu,1))G_{\mu}(\ttt^{(-1)};t)=G_\mu(\ttt^{(1)};t).\] 
Appealing to \cref{def:stonedobmASEP} and noting that $\ttt^{(-1)}=\overline s_0\ttt^{(1)}$, we can rewrite this desired equation as 
\begin{equation}\label{eq:desired1}
p_0\alpha \overline s_0G_{\overline s_0\mu}(\ttt^{(1)};t)+(1-p_0\alpha)\overline s_0G_\mu(\ttt^{(1)};t)=G_\mu(\ttt^{(1)};t).
\end{equation}
Noting that 
\[\frac{1}{\alpha}\frac{(1-\chi_{\s}^{-2})\alpha-(1-t)\chi_\s^{-1}}{1-\chi_{\s}^{-2}}=\frac{1}{p_0\alpha},\] we can apply \eqref{CKZ1} with ${\bf x}=\overline{s}_0\ttt^{(1)}$ to find that 
\[\left(1-\frac{1}{p_0\alpha}\right)\overline{s}_0G_\mu(\ttt^{(1)};t)+\frac{1}{p_0\alpha}G_\mu(\ttt^{(1)};t)=\overline{s}_0G_{\overline{s}_0\mu}(\ttt^{(1)};t),\] and this is equivalent to \eqref{eq:desired1}. 

\medskip 

\noindent {\bf Case 2.} Assume $h=-n$. Then $h'=n$. The proof in this case is very similar to the proof in Case~1, so we omit it. 

\medskip 

\noindent {\bf Case 3.} Assume $2\leq h\leq n$. In this case, $h'=h-1$, and the balance equation \eqref{eq:balance_C} becomes 
\[
\mathbb P((\overline{s}_{h-1}\mu,h-1)\to(\mu,h))G_{\overline{s}_{h-1}\mu}(\ttt^{(h-1)};t)+\mathbb P((\mu,h-1)\to(\mu,h))G_\mu(\ttt^{(h-1)};t)=G_\mu(\ttt^{(h)};t).
\]
Appealing to \cref{def:stonedobmASEP} and setting $i=h-1$, we can write this equation as 
\[p_{i}^\uparrow\hh_t(\mu_{i+1},\mu_i)G_{\overline{s}_{i}\mu}(\ttt^{(i)};t)+(1-p^\uparrow\hh_t(\mu_i,\mu_{i+1}))G_\mu(\ttt^{(i)};t)=G_\mu(\ttt^{(i+1)};t).\] This follows immediately from setting $\nu=\mu$ and ${\bf x}=\ttt^{(i+1)}$ in \cref{lem:rewritingC} and noting (by \eqref{eq:piupdown}) that 
\[p_i^\uparrow=\frac{\chi_\s-\chi_i}{t\chi_\s-\chi_i}=\frac{x_{i+1}-x_i}{tx_{i+1}-x_i}.\]

\medskip

\noindent {\bf Case 4.} Assume $-(n-1)\leq h\leq -1$. In this case, $h'=h-1$, and the balance equation \eqref{eq:balance_C} becomes 
\[
\mathbb P((\overline{s}_{-h}\mu,h-1)\to(\mu,h))G_{\overline{s}_{-h}\mu}(\ttt^{(h-1)};t)+\mathbb P((\mu,h-1)\to(\mu,h))G_\mu(\ttt^{(h-1)};t)=G_\mu(\ttt^{(h)};t).
\] 
Appealing to \cref{def:stonedobmASEP} and setting $i=-h$, we can write this equation as 
\[p_i^\downarrow\hh_t(\mu_{i+1},\mu_i)G_{\overline{s}_{i}\mu}(\ttt^{(h-1)};t)+(1-p_i^\downarrow\hh_t(\mu_i,\mu_{i+1}))G_\mu(\ttt^{(h-1)};t)=G_\mu(\ttt^{(h)};t).\] This follows immediately from setting $\nu=\mu$ and ${\bf x}=\ttt^{(h)}$ in \cref{lem:rewritingC} and noting (by \eqref{eq:piupdown}) that 
\[p_i^\downarrow=\frac{\chi_\s-\chi_i^{-1}}{t\chi_\s-\chi_i^{-1}}=\frac{\chi_i-\chi_{\s}^{-1}}{t\chi_i-\chi_\s^{-1}}=\frac{x_{i+1}-x_i}{tx_{i+1}-x_i}. \qedhere\]
\end{proof}

\begin{remark}\label{rem:close_to_0_C} 
If the probabilities in \eqref{eq:p0pn} and \eqref{eq:piupdown} are all very close to $0$, then $\chi_1,\ldots,\chi_{n-1},\chi_{\s}$ are very close to $1$, so $2n\pi^{\ob}(\mu,i)$ is very close to the stationary probability of $\mu$ in $\CmASEP_\lambda$. As in \cref{rem:close_to_0,rem:close_to_0_i}, this is because the signals that the stone sends to the particles rarely reach the particles. 
\end{remark} 

\subsection{Random Combinatorial Billiards}\label{subsec:billiards3} 

Assume in this subsection that $\lambda=(1,2,\ldots,n)$, and use the map $w\mapsto(w^{-1}(1),\ldots,w^{-1}(n))$ to identify $C_n$ with $S_\lambda^\pm$. 

Recall that the hyperplanes in $\mathcal H_{\widetilde C_n}$ are of the form $\HH_{\beta}^k$ for $\beta\in\Phi^+$ and $k\in\ZZ$, where $\Phi^+$ is the root system of type~$C_n$ given in \eqref{eq:PhiC}. Let 
\[\mathcal H_{\widetilde C_n}^{0}=\{\HH_{2e_i}^{k}:i\in[n],\,k+1\in 2\ZZ\} \quad\text{and}\quad \mathcal H_{\widetilde C_n}^{n}=\{\HH_{2e_i}^{k}:i\in[n],\,k\in 2\ZZ\},\] and let \[\mathcal H_{\widetilde C_n}^{[n-1]}=\{\HH_{e_i\pm e_j}^k:1\leq i<j\leq n,\, k\in\ZZ\}\] so that \[\mathcal H_{\widetilde C_n}=\mathcal H_{\widetilde C_n}^{0}\sqcup \mathcal H_{\widetilde C_n}^{n}\sqcup \mathcal H_{\widetilde C_n}^{[n-1]}.\] It is known that every hyperplane of the form $\HH^{(u,s_0)}$ is in $\mathcal H_{\widetilde C_n}^0$, every hyperplane of the form $\HH^{(u,s_n)}$ is in $\mathcal H_{\widetilde C_n}^n$, and every hyperplane of the form $\HH^{(u,s_i)}$ with $i\in[n-1]$ is in $\mathcal H_{\widetilde C_n}^{[n-1]}$. 

Let \[p=\frac{\chi_\s-1}{t\chi_\s-1},\] and assume that $\chi_1=\cdots=\chi_{n-1}=1$ so that $p_i^\uparrow=p_i^\downarrow=p$ for all $1\leq i\leq n-1$. For $u\in\widetilde C_n$ and $i\in\widetilde I=\{0,1,\ldots,n\}$, let \[{\bf p}(u,s_i)=\begin{cases} p_0\alpha & \mbox{if }i=0; \\  p_n\beta & \mbox{if }i=n; \\ p\hh_t(\overline{u}^{-1}(i),\overline{u}^{-1}(i+1)) & \mbox{if }i\in[n-1].  \end{cases}\] 

Consider the following random billiard trajectory. Start at a point $z_0$ in the interior of $\BB$, and shine a beam of light in the direction of the vector $e_1$. If at some point in time the beam of light is traveling in an alcove $\BB u$ and it hits the hyperplane $\HH^{(u,s_i)}$, then it passes through with probability ${\bf p}(u,s_i)$, and it reflects with probability $1-{\bf p}(u,s_i)$. Note that the probability of the light beam passing through the hyperplane only depends on which of the three sets $\mathcal H_{\widetilde C_n}^0,\mathcal H_{\widetilde C_n}^n,\mathcal H_{\widetilde C_n}^{[n-1]}$ contains the hyperplane and which side of the hyperplane the light beam hits. 

The vector $e_1\in Q^\vee$ belongs to the set $\Upsilon_{z_0}$ (defined in \cref{subsec:intro_billiards}), and its corresponding word is $\mathsf{w}(e_1)=\cdots s_{i_1}s_{i_0}=\cdots\mathsf{vvv}$, where $\mathsf{v}=s_{1}\cdots s_{n-1}s_ns_{n-1}\cdots s_1s_0$. Thus, the period of $\mathsf{w}(e_1)$ is $N_{e_1}=2n$, and we have $i_j=i_{j+2n}$ for all $j\geq 0$. 

We can discretize the billiard trajectory described above as follows. Let $u_M$ be the alcove containing the beam of light after it hits a hyperplane in $\mathcal H_{\widetilde C_n}$ for the $M$-th time; at this point in time, the beam of light is facing toward the facet of $\BB u_M$ contained in $\HH^{(u_M,s_{i_M})}$. This yields a discrete-time Markov chain whose state at time $M$ is the pair $(u_M,M)$ in $\widetilde C_n\times \ZZ/2n\ZZ$. By projecting this Markov chain through the quotient map ${\widetilde C_n\times \ZZ/2n\ZZ\to C_n\times \ZZ/2n\ZZ}$ defined by $(u,i)\mapsto(\overline{u},i)$, we obtain a Markov chain ${\bf M}_{e_1}^{(\alpha,\beta,t)}$ on $C_n\times\ZZ/2n\ZZ$, which models a certain random combinatorial billiard trajectory in the $n$-dimensional torus ${\mathbb T=V^*/Q^\vee=\mathbb R^n/\mathbb Z^n}$ (see \cref{fig:H}). This Markov chain is isomorphic to $\blacktriangle\CmASEP_\lambda$; the isomorphism is just the map $C_n\times\ZZ/2n\ZZ\to C_n\times\pm[n]$ given by $(w,i)\mapsto(w,\iota^{-1}(i))$, where $\iota\colon\pm[n]\to\ZZ/2n\ZZ$ is the bijection defined in \eqref{eq:iota}. Hence, it follows from \cref{thm:main_obASEP} that the stationary probability of a state $(w,i)$ in ${\bf M}_{e_1}^{(\alpha,\beta,t)}$ is \[\frac{1}{2n}\frac{G_w(\ttt^{(\iota^{-1}(i))};t)}{K_{\lambda}(\ttt;t)}.\] 

\begin{figure}[ht]
\begin{center}{\includegraphics[width=0.495\linewidth]{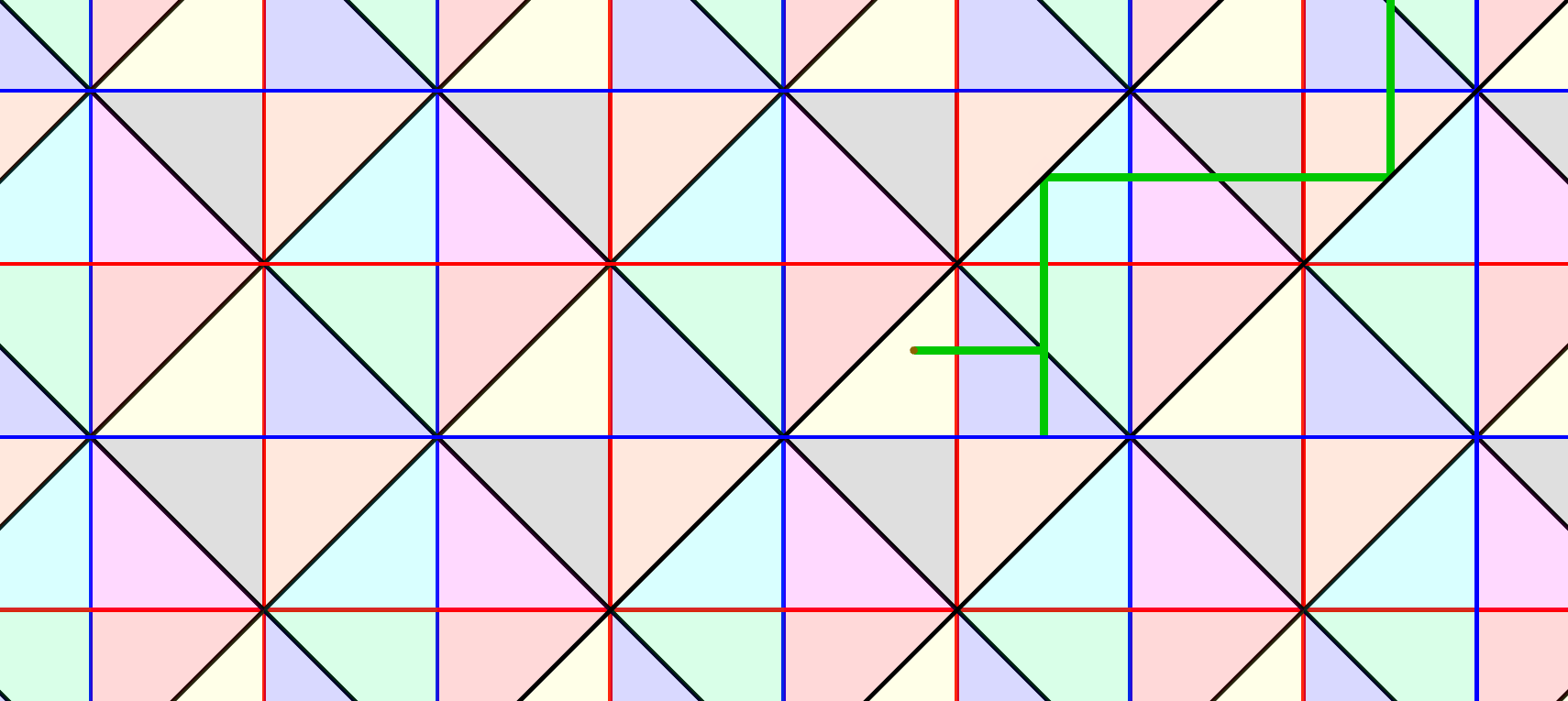} \\ \vspace{0.4cm}\includegraphics[width=\linewidth]{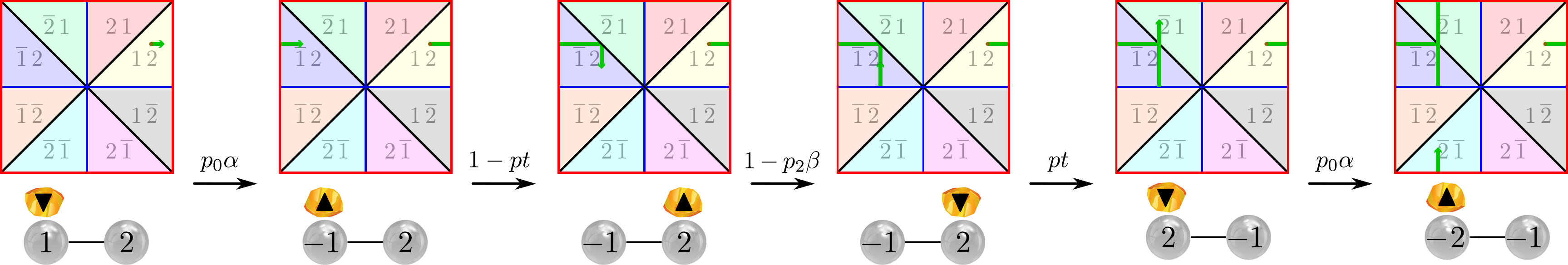}}
\end{center}
\caption{On the top is a ({\color{MildGreen}green}) random billiard trajectory in $\widetilde C_2$. Hyperplanes in $\mathcal H_{\widetilde C_2}^0$, $\mathcal H_{\widetilde C_n}^2$, and $\mathcal H_{\widetilde C_n}^{[1]}$ are in {\color{red}red}, {\color{blue}blue}, and black, respectively. The bottom image shows the first five transitions in ${\bf M}_{e_1}^{(\alpha,\beta,t)}$ corresponding to the beginning of the billiard trajectory from the top image. Each transition is labeled with its probability. Each state is represented both as a beam of light traveling in the $2$-dimensional torus $\mathbb T$ and as a configuration of two particles and a stone on a line. The state with a particle of species $i$ on the left and a particle of species $j$ on the right corresponds to the element $w\in C_2$ satisfying $w(i)=1$ and $w(j)=2$; this state corresponds to the toric alcove labeled by the word {\color{gray}$w(1)w(2)$} (with overlines representing negative numbers). }   
\label{fig:H}
\end{figure}  

\section{Other Directions}\label{sec:other}

Let us now discuss some other potential directions for future work. 

\subsection{Other Weyl Groups and Directions} 
Our initial other direction is about other initial directions. Most of our work focused on combinatorial billiard trajectories in $\affS_n$ (or $\widehat\SSS_n$) in which the beam of light initially travels in the direction of the coroot vector $\ddd^{(n)}=-ne_n+\sum_{j\in[n]}e_j$. It would be interesting to see if some of our results from this case extend to the setting where we replace the initial direction vector $\ddd^{(n)}$ by a different nonzero vector. What happens if one chooses an initial direction randomly? It is also very natural to see how much of our work extends to other affine Weyl groups (or possibly even other Coxeter groups). 

\subsection{Breaking Symmetry} 
In \cref{thm:LamBilliards2}, we had to initiate the random process by choosing $\epsilon\in\{\pm 1\}$ uniformly at random and shining the beam of light in the direction of $\epsilon\dd$. The reason for doing this was to add additional symmetry so that we could prove \eqref{eq:xxinverse}. It would be very interesting to obtain an analogous result when we break this symmetry and always choose $\epsilon=1$. 

\subsection{Convergence Rates}

Lam's \cref{thm:Lam} tells us that the unit vector pointing in the same direction as the state of the reduced random walk almost surely converges to a vector in $\langle\psi_\Lam\rangle W$. One could ask for quantitative bounds on the rate of this convergence; this would likely make use of estimates on the mixing time of the multispecies TASEP (which are notoriously difficult to obtain). Likewise, \cref{thm:LamBilliards} tells us that the unit vector pointing in the same direction as the state of the reduced random combinatorial billiard trajectory $\widetilde{\bf M}_\dd$ almost surely converges to a vector in $\langle\psi_\dd\rangle W$. It would be interesting to obtain quantitative bounds on the rate of this convergence. One could also restrict to the special case where $W=\SSS_n$ and $\dd=\ddd^{(n)}$; here, such estimates for the convergence rate would likely require one to derive estimates on the mixing time of the stoned multispecies TASEP, which could be interesting in its own right. Schmid and Sly \cite{SchmidSly} analyzed the mixing time of the $2$-species TASEP on a ring with $n$ sites; perhaps it could be fruitful to consider the mixing time of the $2$-species stoned TASEP with $1$ stone of density $1$ and $n-1$ stones of density~$2$.  

\subsection{Stoned Exclusion Processes} 
We believe it should be possible to define and study stoned variants of other notable Markov chains beyond the three examples constructed and analyzed in \cref{sec:ring,sec:inhomogeneous,sec:open}. It would be very interesting to develop a general theory of stoned Markov chains. Such a theory would likely concern a ``stoning operation'' that uses one Markov chain (the \emph{auxiliary chain}) to drive the transitions among other objects (playing the roles of the particle configurations). 

There are several other fascinating aspects of the multispecies (T)ASEP and related models, and it is natural to study the analogous aspects of stoned exclusion processes. For example, one could consider statistics such as particle currents, as in \cite{AMW,ANP}. 

Also, recall \cref{rem:abcd}, which asks if it is possible to generalize \cref{thm:main_obASEP} by replacing the two parameters $\alpha,\beta$ with four (possibly all distinct) parameters $\alpha,\beta,\gamma,\delta$. A natural special case that could be worth considering is that in which $\gamma=\delta=0$. 

\subsection{Correlations} 
In \cref{sec:correlations}, we considered the stoned $n$-species TASEP with a single stone of density $1$ and $n-1$ stones of density $2$, and we computed certain $2$-point correlations. Namely, for $1\leq i<j\leq n$, we found a formula for the stationary probability of the event that the particles of species $i$ and $j$ are on sites $1$ and $n$, respectively. This was the only correlation result we needed for our proofs of \cref{thm:AyyerLinussonBilliards,cor:shape,prop:shape}. However, one could also consider other $2$-point correlations or even $3$-point correlations, as Ayyer and Linusson did for the multispecies TASEP in \cite{AyyerLinusson}. For instance, one could ask for the stationary probability of the event that the particles of species $i$ and $j$ are on sites $n$ and $1$, respectively, when $1\leq i<j\leq n$. One could also ask for $2$-point correlations involving sites that are not adjacent.

\subsection{Refractions} 
The article \cite{DefantRefractions} introduces a notion of \emph{refractions} in combinatorial billiards. Roughly speaking, when a beam of light hits a hyperplane, it can \emph{refract} by passing through the hyperplane and then moving in the direction opposite to the direction that it would have moved if it had reflected. This notion of refraction has also been the subject of several works in dynamics over the last several years \cite{Baird,Barutello,Davis1,Davis2,DeBlasi2,
DeBlasi1,Glendinning,Jay,Paris}. 

It would be very interesting to generalize the stochastic models that we have introduced by allowing refractions. For example, suppose we choose parameters $p,p'\in(0,1)$ satisfying ${p+p'\leq 1}$. One could consider a generalization of the reduced random billiard trajectory in which the beam of light has probability $p$ of reflecting, probability $p'$ of refracting, and probability $1-p-p'$ of passing directly through whenever it hits a hyperplane in $\mathcal H_{\widetilde W}$ that it has not already crossed. It seems likely that one could also use the combinatorial formulations of refractions from \cite{DefantRefractions} in order to define generalizations of stoned exclusion processes that (in some cases) encode random combinatorial billiard trajectories with refractions.

\section*{Acknowledgments}
The author was supported by the National Science Foundation under Award No.\ 2201907 and by a Benjamin Peirce Fellowship at Harvard University. He thanks Jimmy He, Noah Kravitz, Olya Mandelshtam, Yelena Mandelshtam, Matthew Nicoletti, and Alan Yan for stimulating discussions. He is especially grateful to Lauren Williams for several extremely helpful conversations and for comments on a preliminary draft of this manuscript. He also thanks the anonymous referees for helpful comments.

\end{document}